\documentclass[12pt]{amsart}

\usepackage[]{graphicx,color,verbatim}
\usepackage[colorlinks=false]{hyperref}
\usepackage[paperwidth=8.5in,paperheight=11in,inner=2.5cm,outer=2.5cm,top=2.5cm,bottom=2.5cm]{geometry}
\linespread{1.06}
\usepackage{MnSymbol}

\newtheorem{lemma}{Lemma}[section]
\newtheorem{theorem}[lemma]{Theorem}

\newtheorem*{corollary*}{Corollary}
\newtheorem{cor}[lemma]{Corollary}
\newtheorem{proposition}[lemma]{Proposition}
\newtheorem*{proposition*}{Proposition}

\newtheorem*{thm*}{Theorem}

\theoremstyle{definition}

\newtheorem{remark}[lemma]{Remark}
\newtheorem*{remark*}{Remark}
\newtheorem{rem}[lemma]{Remark}
\newtheorem*{rem*}{Remark}

\newtheorem{example}[lemma]{Example}

\newtheorem*{quest*}{Question}

\theoremstyle{definition}
\newtheorem{exm}[lemma]{Example}
\newtheorem{Def}[lemma]{Definition}
\newtheorem*{def*}{Definition}

\newcommand\NN{\mathbb{N}}

\newcommand\RR{\mathbb{R}}

\newcommand{\CV}{\ensuremath{\mathcal{CV}}}


\begin{document}

\title{The core variety and representing measures in the truncated moment problem}
\author{Grigoriy Blekherman} 
\address{School of Mathematics, Georgia Institute of Technology, Atlanta, GA 30332}
\email{greg@math.gatech.edu}
\author{Lawrence Fialkow }
\address{Department of Computer Science, State University of New York, New Paltz, NY 12561}
\email{fialkowl@newpaltz.edu}

\subjclass[2010]{Primary 44A60, 47A57, 47A20; Secondary 
47N40}
\keywords{truncated moment problems, representing measure,
variety of a multisequence, positive Riesz functional, moment matrix}

\begin{abstract}

The classical Truncated Moment problem asks for necessary and sufficient conditions so that a linear functional $L$ on $\mathcal{P}_{d}$, the vector space of real $n$-variable polynomials of degree at most $d$, can be written as integration with respect to a positive Borel measure $\mu$ on $\RR^n$. We work in a more general setting, where $L$ is a linear functional acting on a finite dimensional
vector space $V$ of Borel-measurable functions defined on a $T_{1}$ topological space $S$. Using an iterative geometric
construction, we associate to $L$ a subset of $S$ called the \textit{core variety},
$\mathcal{CV}(L)$. Our main result is that $L$ has a representing measure $\mu$ if and only if
$\mathcal{CV}(L)$ is nonempty. In this case, $L$ has a finitely atomic representing measure,
and the union of the supports of such measures is precisely $\mathcal{CV}(L)$. We also use the core variety to describe the facial decomposition of the
cone of functionals in the dual space $V^{*}$ having representing measures.
We prove a generalization of the Truncated Riesz-Haviland Theorem of \cite{CF2}, which permits us
to solve a generalized Truncated Moment Problem in terms of positive extensions of $L$.
These results are adapted to derive a Riesz-Haviland Theorem for a generalized Full Moment Problem
and to obtain a core variety theorem for the latter problem.

\end{abstract}

\maketitle

\section{Introduction}

Let $S$ be a (nonempty) topological space in which points are closed, and let $V$ be a finite dimensional vector space of Borel-measurable real-valued functions on $S$. Let $V^*$ be the dual space of linear functionals on $V$. The main question of this paper is the following version of the generalized truncated moment problem: given $L \in V^*$, does $L$ have a \textit{representing measure},
 i.e., a positive Borel measure $\mu$ on $S$ such that $L(f)=\int_S f \, d\mu$ for all $f \in V$?

To motivate our results, we first recall the classical full and truncated moment problems. 
Let $\mathcal{P} \equiv \mathbb{R}[x_{1},\ldots,x_{n}]$ denote the ring of real polynomials in $n$ real variables. 
 For an $n$-dimensional
multisequence $\beta \equiv \beta^{(\infty)} := \{\beta_{i}\}_{i\in \mathbb{Z}_{+}^{n}}$,
$\beta_{0} > 0$, let $L \equiv L_{\beta}:\mathcal{P} \rightarrow \mathbb{R}$ 
be the \textit{Riesz linear functional} defined by $L(x^{i}) := \beta_{i}$ ($i\in \mathbb{Z}_{+}^{n}$),
where $x \equiv (x_{1},\ldots, x_{n}) \in 
\mathbb{R}^{n}$, $i = (i_{1},\ldots, i_{n})\in \mathbb{Z}_{+}^{n}$, and
$x^{i} := x_{1}^{i_{1}}\cdots x_{n}^{i_{n}}$.
Given $\beta$ and a closed set $K\subseteq \mathbb{R}^{n}$, the classical \textit{Full 
$K$-Moment Problem} seeks necessary and sufficient conditions for the existence
of a positive Borel measure $\mu$ on $\mathbb{R}^{n}$ satisfying $\operatorname{supp} \mu \subseteq K$ and
$\beta_{i} = \int_{\mathbb{R}^{n}} x^{i} d\mu$, $i\in \mathbb{Z}_{+}^{n}$.
 The theorems of M. Riesz \cite{Rie}
($n=1$) and 
E.K. Haviland \cite{Hav} ($n>1$) show that $\beta$ has a representing measure 
supported in  $K$ if and only if $L$ is \textit{positive with respect to K}, i.e., $L(p) \ge 0$ whenever $p\in \mathcal{P}$
satisfies $p|_K \ge 0$. For general $K$, the Riesz-Haviland Theorem is difficult to apply,
but various concrete applications are known in special cases (cf. \cite{A, AK, Las,Lau}).
We note particularly Schm\"{u}dgen's solution to 
the Full $K$-Moment Problem  in the case
when $K$ is a compact basic semi-algebraic set \cite{Sch1};
Schm\"{u}dgen's work  established a close connection between moment theory
and real algebraic geometry.

Now let $\mathcal{P}_{d}$ denote the polynomials of degree at most $d$ in $n$ real variables,
and let $\beta \equiv
\beta^{(d)}$ denote an $n$-dimensional multisequence of degree $d$,
i.e., $\beta = \{\beta_{i}\}_{i\in \mathbb{Z}_{+}^{n},~ |i|\le d}$. The classical
\textit{Truncated $K$-Moment Problem} (TKMP) seeks conditions for $\beta$ to have a representing
measure supported in $K$. Although the direct analogue of the Riesz-Haviland Theorem holds for TKMP when $K$ is compact (this is essentially the content of the proof of Tchakaloff's Theorem \cite{Tch}), such is not the case for general $K$. Instead, the Truncated Riesz-Haviland Theorem \cite{CF2} shows that $\beta \equiv \beta^{(2d-2)}$ or $\beta \equiv \beta^{(2d-1)}$
admits a representing measure supported in a closed set $K$ if and only if the corresponding
Riesz functional $L_{\beta}$ can be extended to a linear functional $\widehat{L}:\mathcal{P}_{2d+2} \rightarrow \mathbb{R}$ positive with respect to $K$. Alternately, in \cite{CF1}, Curto and the second-named author formulated a solution in terms of \textit{flat extensions}
of moment matrices.

As in the 
Full $K$-Moment Problem, the preceding solutions to the truncated problem are
difficult to apply in general, although various concrete solutions  are known
in special cases \cite{F1}. Recently, the first-named author \cite{Ble} used an approach
based in 
convex geometry to establish the following concrete condition:  $\beta^{(2d-1)}$
has a representing measure whenever the moment matrix $M$ corresponding to $\beta^{(2d)}$ satisfies
$\operatorname{rank} M \le 3d-3$.

In view of the difficulty of applying the  extension results of \cite{CF1} and \cite{CF2}
to general multisequences $\beta \equiv \beta^{(d)}$,  in \cite{F2} we
introduced an alternate approach based on a geometric invariant called
the \textit{core variety} of $L_{\beta}$. In \cite{F2}, the core variety was used
to establish the existence or nonexistence of representing measures in a
number of special cases. The core variety contains the support of each representing measure
for $\beta$, but what emerged from \cite{F2} was the following question:
\begin{quest*} If the core variety of $\beta$ is nonempty, does $\beta$ have
a representing measure?
\end{quest*}

In the present note, we resolve this question affirmatively, and in a very general setting. 
We also prove an
extended version of the Truncated Riesz-Haviland Theorem, and apply it to the Full Moment Problem to obtain a generalized version of the Riesz-Haviland Theorem.
This  also entails extending  Stochel's Theorem \cite{Sto}, concerning the connection
between the full and truncated moment problems. As a consequence of these results,
we also obtain an analogue of our core variety results for the Full Moment Problem.
Our approach emphasizes convex-geometric aspects of the truncated moment problem, and we use a number of elementary results from finite-dimensional convex geometry. 

\begin{remark*}

Finitely atomic measures correspond to positive linear combinations of point evaluation functionals in $V^*$.
 As we explain in more detail below (cf. Remark \ref{rem:farm}), our results concerning finitely atomic representing measures, when expressed equivalently in terms of conical combinations of point evaluations, 
actually hold in complete generality, with virtually no assumptions on $S$ or on the functions in $V$. However, by working in complete generality,  the connection to the classical moment problem is not as transparent as in
our setting of a topological space $S$ with points closed, where all point evaluation functionals have representing measures.

\end{remark*}

 In the case where the linear functional $L$ is known to come from a measure, the concept of core variety was studied by di Dio and Schm\"{u}dgen in \cite{DS, Sch2}. In particular, they independently obtained Theorem \ref{thm:main} in case $L$ is known to come from a measure, and also Proposition \ref{prop:faces} \cite[Theorem 29]{DS}. Moreover, Gabardo did some work in this direction in the context of trigonometric moment problems in \cite{Gab}. Note that the setting for our results includes a variation of TKMP in which the sequence $\beta^{(d)}$ is only partially defined, i.e., moments corresponding to certain monomials are not prescribed. A special case of this problem has been studied by Laurent and Mourrain \cite{LM}, and a somewhat analogous partially defined trigonometric moment problem appears in \cite{Gab}. \\

\noindent \textit{Acknowledgements.} The first author was partially supported by NSF grant DMS-1352073. Both authors are grateful to organizers of Oberwolfach Workshop on Real Algebraic Geometry With a View Toward Moment Problems and Optimization (March, 2017), where fruitful discussions took place.

\subsection{Main Results}\label{sec:results}

With $S$ and $V$ as  above, let $P \equiv P_{V}$ be the set of all functions in $V$ which are nonnegative on $S$:
$$P=\{f \in V \,\,\ \mid \,\, f(s) \geq 0 \,\,\, \text{for all} \,\,\, s \in S \}.$$
Since $V$ is finite dimensional, we may equip $V$ with the Euclidean topology; note that $P$ is then a closed convex cone in $V$. It also follows that $P$ is a \textit{pointed} cone, i.e., $P$ does not contain lines.  We will assume that $P$ contains a strictly positive function. This assumption is important for our main results; see Remark \ref{rem:pos} for additional discussion, and Remark \ref{rem:gen} for a possible generalization. For a subset $T$ of $V$ we will denote by $\mathcal{Z}(T)\subset S$ the set of common zeros of functions in $T$:
$$\mathcal{Z}(T)=\{s \in S \, \mid \, f(s)=0 \,\,\, \text{for all $f \in T$}\}.$$
Let $M$ be the cone of functionals in $V^*$ which have representing measures. We observe that $M$ is a convex cone in $V^*$, but $M$ may fail to be closed \cite{EF}. Our main goal is to characterize linear functionals belonging to $M$.

Given a linear functional $L \in V^*$,
let $S_0 \equiv S_0[L]=S$, and let $S_1\equiv S_1[L]$ be the zero set of all nonnegative functions in the kernel of $L$:

$$S_1=\mathcal{Z}(p\in P \,\, \mid \,\, L(p)=0).$$

\noindent Then we iteratively define: $$S_{i+1} 
\equiv S_{i+1}[L]
=\mathcal{Z}(p \in V \,\,\, \mid \,\,\, L(p)=0 \,\,\, \text{and $p$ is nonnegative on $S_i$}).$$ The above construction eventually terminates,
i.e., for some $k\ge 0$, we have $S_{k} = S_{k+1}$
 (cf. Theorem \ref{thm:stabilize} and Remark \ref{rem:gen}).
 This allows us to define the core variety of $L$, as follows:
\begin{def*}
We call the terminal set $S_k~(=S_{k+1}=S_{k+2} \dots)$ the \textit{core variety} of $L$ and denote it by $\mathcal{CV}(L)$.
\end{def*}

The following is our main result on core varieties. 

\begin{thm*}[Theorem \ref{thm:main}]
For any nonzero $L \in V^*$ exactly one of the following holds:
\begin{enumerate}
\item The core variety of $L$ is empty and $L$ does not have a representing measure.\\
\item The core variety of $L$ is non-empty, in which case, replacing $L$ by $-L$ if necessary, we
may assume that $L(\rho) > 0$ for a strictly positive $\rho \in P$.  Then $L$ has a representing measure, and the core variety of $L$  is equal to the union of all supports of finitely atomic measures representing $L$. If $S$ is a Hausdorff space, and the functions in $V$ are continuous on $S$, then $\CV(L)$ is also equal to the union of supports of all Radon measures representing $L$.
\end{enumerate}
\end{thm*}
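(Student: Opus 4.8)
My plan is to work throughout in the dual cone picture. Set $C:=\overline{M}\subseteq V^{*}$ and introduce the point evaluations $L_{s}\in V^{*}$, $L_{s}(f)=f(s)$, and for $A\subseteq S$ the cone $M_{A}:=\operatorname{cone}\{L_{s}:s\in A\}$, so that the functionals with finitely atomic representing measures are exactly $M_{S}$. Since a functional is nonnegative on every $L_{s}$ iff the corresponding function lies in $P$, we have $M^{*}=P$, and the bipolar theorem gives $C=\overline{M}=P^{*}$, with each $L_{s}\in C$. The strictly positive $\rho$ (cf.\ Remark \ref{rem:pos}) normalizes everything: $L_{s}(\rho)=\rho(s)>0$, and $\rho$ furnishes a compact base for the relevant subcones of $C$, which is what makes the finite-dimensional convex geometry effective. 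The first substantive step is the sign normalization, which simultaneously forces $L\in C$. Assume $\CV(L)\neq\emptyset$ and fix $v\in\CV(L)\subseteq S_{1}=\mathcal{Z}(P\cap\ker L)$. The planar cone $\Psi(P)$ with $\Psi(p)=(L(p),p(v))$ lies in the upper half-plane (as $p(v)\ge0$ for $p\in P$) and meets the vertical axis only at the origin (as $v\in S_{1}$ forces $p(v)=0$ whenever $L(p)=0$). Since $\rho(v)>0$ this already gives $L(\rho)\neq0$; replacing $L$ by $-L$ (which leaves every $S_{i}$ unchanged, the construction depending only on $\ker L$ and on sign-free positivity) we may take $L(\rho)>0$. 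A two-point convex combination of $\rho$ with any hypothetical $p\in P$ satisfying $L(p)<0$ would then land on the vertical axis strictly above the origin, a contradiction; hence $L\in P^{*}=C$ and $L(\rho)>0$.

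The heart of the argument is to show $L\in M_{\CV(L)}$. I would first record that $S_{i}=\{s:L_{s}\in F_{i}\}$ for a decreasing chain of subcones $F_{i}\subseteq C$ with $L\in F_{i}$ for all $i$: writing $W_{i}:=\{p\in V:p|_{S_{i}}\ge0\}$ one has $S_{i+1}=\mathcal{Z}(W_{i}\cap\ker L)$, and since $L_{s}(p)=0$ reads $p(s)=0$, this is $\{s:L_{s}\in C\cap(W_{i}\cap\ker L)^{\perp}\}=:\{s:L_{s}\in F_{i+1}\}$; membership $L\in F_{i+1}$ is automatic because $L(p)=0$ for $p\in\ker L$. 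Put $K:=\overline{\operatorname{cone}}\{L_{s}:s\in\CV(L)\}$, so $K^{*}=W_{k}=\{p:p|_{\CV(L)}\ge0\}$. The stabilization $S_{k}=S_{k+1}=\mathcal{Z}(W_{k}\cap\ker L)$ says precisely that every $p\in W_{k}\cap\ker L$ vanishes on $\CV(L)$, i.e.\ $W_{k}\cap\ker L\subseteq K^{\perp}$. By the standard duality characterization of the relative interior of a closed convex cone, $\{p\in K^{*}:\langle L,p\rangle=0\}\subseteq K^{\perp}$ is exactly the assertion $L\in\operatorname{relint}(K)$, once we know $L\in K$. Granting $L\in K$, the elementary facts $\operatorname{relint}(\overline{D})=\operatorname{relint}(D)$ and $\operatorname{relint}(D)\subseteq D$ applied to $D=\operatorname{cone}\{L_{s}:s\in\CV(L)\}$ give $L\in M_{\CV(L)}$, hence a finitely atomic representing measure supported inside $\CV(L)$.

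The main obstacle is the remaining membership $L\in K$, equivalently the sub-lemma that $L(p)\ge0$ for every $p$ with $p\ge0$ on $\CV(L)$. If a representing measure were known this is immediate by integration, so this is the one place where positivity of $L$ must be manufactured from the geometry. I would prove it by induction on $i$, showing $L\ge0$ on $W_{i}$: the base case is $L\in C=W_{0}^{*}$, and for the step, given $p\ge0$ on $S_{i+1}=\mathcal{Z}(W_{i}\cap\ker L)$, I would produce $q\in\operatorname{cone}(W_{i}\cap\ker L)$ with $p+q\ge0$ on $S_{i}$, whereupon $L(p)=L(p+q)\ge0$ by the inductive hypothesis and $L|_{\ker L}=0$. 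The existence of such $q$ is a linear-Positivstellensatz/closedness statement — that the cone of functions nonnegative on the common zero set of $W_{i}\cap\ker L$ is recovered from $W_{i}$ by adjoining the subspace-cut subcone — and it is exactly here that the compactness afforded by the strictly positive $\rho$ is indispensable, since otherwise these intermediate cones (like $M$ itself) can fail to be closed, as noted after the definition of $M$.

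Finally I would assemble the dichotomy and support description. If $\CV(L)=\emptyset$, then $L$ has no representing measure: any representing measure $\mu$ gives $L=\int L_{s}\,d\mu\in C$, and a Tchakaloff-type theorem (valid since $V$ is finite dimensional, cf.\ \cite{Tch}) replaces $\mu$ by a finitely atomic measure whose atoms lie in $\CV(L)$ by the containment below, forcing $L=0$, contrary to $L\neq0$. The containment $\operatorname{supp}\mu\subseteq\CV(L)$ for finitely atomic $\mu=\sum_{j}c_{j}\delta_{x_{j}}$ ($c_{j}>0$) is immediate by induction: if $p\ge0$ on $S_{i}\supseteq\operatorname{supp}\mu$ and $L(p)=\sum_{j}c_{j}p(x_{j})=0$, then each $p(x_{j})=0$, so $\operatorname{supp}\mu\subseteq S_{i+1}$. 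For the reverse inclusion, given $v\in\CV(L)$ we have $L_{v}\in K$ and $L\in\operatorname{relint}(K)$, so $L-cL_{v}\in\operatorname{relint}(K)\subseteq M_{\CV(L)}$ for small $c>0$; adding back the atom $c\delta_{v}$ exhibits a finitely atomic representing measure with $v$ in its support. Hence $\CV(L)$ equals the union of supports of finitely atomic representing measures. In the Hausdorff, continuous setting the same inductive containment applies to an arbitrary Radon $\mu$ because $\{p>0\}$ is then open, so any support point lying in it would force $\int p\,d\mu>0$; together with the observation that finitely atomic measures are Radon, this yields the final clause.
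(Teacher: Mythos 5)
The overall architecture is sound and much of it parallels the paper (the sign normalization and the deduction $L\in P^{*}$ reproduce Lemma \ref{lemma:prep}(1); the step ``$L\in\operatorname{relint}(K)$ for $K$ the closed conical hull of evaluations at $\CV(L)$, hence $L$ lies in the conical hull itself'' is the relative-interior version of Corollary \ref{cor:int}; the support containments and the Radon case are as in Theorem \ref{thm:stabilize}). But there is a genuine gap at the step you yourself flag as the heart of the matter, namely the membership $L\in K$, i.e.\ $L(p)\ge 0$ for all $p$ nonnegative on $\CV(L)$. Your inductive mechanism rests on the sub-lemma that whenever $p\ge 0$ on $S_{i+1}=\mathcal{Z}(W_i\cap\ker L)$ there exists $q\in W_i\cap\ker L$ with $p+q\ge 0$ on $S_i$. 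This is false. Take $S=\mathbb{R}$, $V=\mathcal{P}_4$ in one variable, and $L=\delta_0+\delta_1$. Then $P\cap\ker L=\{c\,x^2(x-1)^2: c\ge 0\}$ and $S_1=\{0,1\}$. The function $p(x)=x$ is nonnegative on $S_1$, but $x+c\,x^2(x-1)^2$ is negative at $x=-\varepsilon$ for every $c\ge 0$ and small $\varepsilon>0$, so no admissible $q$ exists. Thus the ``linear Positivstellensatz'' you invoke does not hold, and no amount of compactness supplied by $\rho$ rescues it in the form stated; your induction therefore does not close.

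The conclusion you need is nevertheless true, and the paper obtains it by a one-line argument that bypasses the induction entirely (this is the argument of Lemma \ref{lemma:prep}(1) and Lemma \ref{lemma:consistence}, transported to $S_k$ inside the proof of Theorem \ref{thm:stabilize}): if $p\ge 0$ on $S_k$ and $L(p)<0$, set $a=L(\rho)>0$, $b=L(p)<0$ and $f=\rho-\tfrac{a}{b}p$; then $f\in\ker L$, $f\ge\rho>0$ on $S_k$, so $f$ is one of the functions cutting out $S_{k+1}$ and forces $S_{k+1}\cap S_k=\emptyset$, contradicting $S_{k+1}=S_k\neq\emptyset$. (The same argument, run with ``$L(p)=0$ and $p$ not identically zero on $S_k$,'' even gives strict positivity of the induced functional on $V|_{S_k}$, which is what the paper feeds into Corollary \ref{cor:int}.) Replacing your induction by this direct step repairs the proof. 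A secondary point: in the direction ``$\CV(L)=\emptyset\Rightarrow$ no representing measure'' you reduce an arbitrary representing measure to a finitely atomic one by citing a Tchakaloff-type theorem \cite{Tch}; Tchakaloff's theorem requires compact support, and the general finite-dimensional version (Richter/Bayer--Teichmann) is exactly Corollary \ref{cor:BT}, which the paper derives \emph{from} this theorem. To avoid circularity (or a stronger external citation than the one you give), argue as the paper does: show by induction that each $S_i$ is Borel with $\mu(S_i)=\mu(S)>0$, using that the sum $F$ of a basis of the nonnegative-on-$S_i$ part of $\ker L$ satisfies $L(F)=0$, $F>0$ on $S_i\setminus S_{i+1}$, and $\mathcal{Z}(F)\cap S_i=S_{i+1}$.
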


\begin{rem*}
We observe that if $L\in V^*$ is the zero linear functional, then $\CV(L)=\emptyset$, but $L$ does have a positive representing measure: the zero measure. 
\end{rem*}

\begin{rem*}
In part (2) of the above Theorem, concerning the case when the functions in $V$ are continuous, the condition needed in the proof is that  measures $\mu$ satisfy $\mu(S\setminus \operatorname{supp} \mu)=\emptyset$. This is automatically satisfied if $S$ is Hausdorff and $\mu$ is a \textit{Radon measure} on $S$ \cite[Appendix A.1]{Sch2}\cite[p. 155]{Bau}. Therefore we can formulate part (2) as:  If the functions in $V$ are continuous on $S$, then $\CV(L)$ is also equal to the union of supports of all measures representing $L$ and satisfying $\mu(S \setminus \operatorname{supp} \mu)=\emptyset$.
\end{rem*}

As a corollary of the above Theorem  we prove a 
version of the Bayer-Teichmann Theorem 
\cite[Theorem 2]{BT} on multivariable cubature. 
The result in \cite{BT} generalizes the classical cubature existence theorem of Tchakaloff \cite{Tch} for the polynomial case when $S$ is compact.
\begin{corollary*}[Bayer-Teichmann Theorem, Corollary \ref{cor:BT}]
Suppose that $L\in V^*$ has a representing measure $\mu$. Then $L$ has a finitely atomic representing measure $\nu$ with $\operatorname{card}\operatorname{supp} \nu \le \dim ~V$. If, additionally, $S$ is Hausdorff and $\mu$ is a Radon measure, then $\operatorname{supp} \nu \subseteq \operatorname{supp} \mu$.\end{corollary*}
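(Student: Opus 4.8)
The plan is to prove the Bayer-Teichmann Corollary by reducing it to the main core variety theorem (Theorem \ref{thm:main}) together with a Carath\'{e}odory-type dimension argument. The statement has two parts: first, that a representable $L$ admits a finitely atomic representing measure with at most $\dim V$ atoms; second, that when $S$ is Hausdorff and $\mu$ is Radon, such a measure can be found with support inside $\operatorname{supp}\mu$.

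\medskip
\noindent\textbf{Existence of a finitely atomic measure with few atoms.}
First I would dispose of the trivial case $L=0$, for which the zero measure works and $\operatorname{card}\operatorname{supp}\nu = 0 \le \dim V$. For nonzero $L$, since $L$ has a representing measure, Theorem \ref{thm:main} tells us that $\mathcal{CV}(L)$ is nonempty and that $L$ lies in the cone $M$. The key observation is that having a representing measure means $L$ lies in the convex cone generated by the point evaluation functionals $\{\delta_s : s \in S\} \subset V^*$, where $\delta_s(f) = f(s)$: indeed, a finitely atomic measure $\sum_j c_j \delta_{s_j}$ with $c_j > 0$ represents $L$ precisely when $L = \sum_j c_j \delta_{s_j}$ in $V^*$. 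So the first task is to produce \emph{some} finitely atomic representing measure. By Theorem \ref{thm:main}(2), $\mathcal{CV}(L)$ equals the union of supports of finitely atomic representing measures, hence in particular such measures exist; pick any one, say $\mu_0 = \sum_{j=1}^m c_j \delta_{s_j}$ with $c_j > 0$. Now $L$ is written as a conical (nonnegative) combination of the $m$ vectors $\delta_{s_1},\dots,\delta_{s_m}$ in the $(\dim V)$-dimensional space $V^*$. By the conical version of Carath\'{e}odory's theorem, $L$ can be rewritten as a conical combination of at most $\dim V$ of these vectors. Discarding the atoms with zero coefficient yields a representing measure $\nu$ with $\operatorname{card}\operatorname{supp}\nu \le \dim V$.

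\medskip
\noindent\textbf{Controlling the support in the Radon case.}
For the second part, the goal is to arrange $\operatorname{supp}\nu \subseteq \operatorname{supp}\mu$. The natural route is again via Theorem \ref{thm:main}(2): when $S$ is Hausdorff and the functions in $V$ are continuous, that theorem identifies $\mathcal{CV}(L)$ with the union of supports of \emph{all} Radon representing measures, so in particular $\operatorname{supp}\mu \subseteq \mathcal{CV}(L)$. The finitely atomic measure produced above has support in $\mathcal{CV}(L)$, but not a priori inside $\operatorname{supp}\mu$. To fix this I would start the Carath\'{e}odory argument not from an arbitrary finitely atomic measure, but from the given $\mu$ itself: one shows that $L = \int_S \delta_s \, d\mu(s)$ expresses $L$ as a limit/average of point evaluations drawn from $\operatorname{supp}\mu$, and then invokes the integral form of Carath\'{e}odory's theorem (for points in the closed convex conical hull of a compact or, more generally, suitably structured set) to extract finitely many atoms $s_1,\dots,s_r \in \operatorname{supp}\mu$ with $r \le \dim V$ and positive weights so that $L = \sum c_j \delta_{s_j}$. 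The hypothesis that $\mu$ is Radon, equivalently that $\mu(S\setminus\operatorname{supp}\mu)=0$ as noted in the remarks, guarantees that $\mu$ is genuinely concentrated on $\operatorname{supp}\mu$, so the atoms can legitimately be chosen from that set.

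\medskip
\noindent The main obstacle I anticipate is the second part: the integral Carath\'{e}odory argument requires that $\{\delta_s : s \in \operatorname{supp}\mu\}$, viewed in the finite-dimensional space $V^*$, have its conical hull behave well enough for $L$ to be representable using only finitely many extreme directions drawn from $\operatorname{supp}\mu$ \emph{and} that no limiting/boundary effects force us outside $\operatorname{supp}\mu$. The continuity of the functions in $V$ and the Hausdorff/Radon hypotheses are exactly what make the map $s \mapsto \delta_s$ continuous and ensure $\mu$ lives on $\operatorname{supp}\mu$; the careful point is to justify that the measure $\mu$ (which may be non-atomic) can be replaced by a finitely supported one \emph{without enlarging the support}, which is the content of the classical Bayer-Teichmann/Tchakaloff mechanism and which I expect to handle by a standard finite-dimensional moment-cone dimension count rather than any genuinely new estimate.
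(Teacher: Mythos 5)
Your first part is fine and matches the paper: Theorem \ref{thm:main} yields some finitely atomic representing measure, and the conical Carath\'{e}odory theorem in the $(\dim V)$-dimensional space $V^*$ cuts the number of atoms down to $\dim V$. The gap is in the second part. Your route to $\operatorname{supp}\mu \subseteq \mathcal{CV}(L)$ invokes the clause of Theorem \ref{thm:main}(2) that requires the functions in $V$ to be \emph{continuous}, but the corollary makes no such assumption --- the functions are merely Borel measurable, and Example \ref{exm:support} in the paper shows that for such $V$ one can have $\operatorname{supp}\mu \not\subseteq \mathcal{CV}(L)$ (there $\operatorname{supp}\mu=[-1,1]$ while $\mathcal{CV}(L)=[-1,1]\setminus\{0\}$), so the containment you want to use is simply false in the generality of the statement. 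Moreover, even granting continuity, you correctly observe that $\operatorname{supp}\nu\subseteq\mathcal{CV}(L)$ does not give $\operatorname{supp}\nu\subseteq\operatorname{supp}\mu$, and the repair you sketch --- an ``integral Carath\'{e}odory'' extraction of atoms from $L=\int_S \delta_s\,d\mu(s)$ --- is left entirely unexecuted; it is precisely the hard analytic content of the classical Tchakaloff/Bayer--Teichmann argument that this corollary is designed to avoid, and it again needs continuity of $s\mapsto\delta_s$ to control limits.

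The paper's proof sidesteps all of this with a short localization: set $\widetilde{V}:=\{f|_{\operatorname{supp}\mu} : f\in V\}$ and define $\widetilde{L}(f|_{\operatorname{supp}\mu}):=L(f)$. This is well defined because the Radon hypothesis gives $\mu(S\setminus\operatorname{supp}\mu)=0$, so $f|_{\operatorname{supp}\mu}\equiv 0$ forces $L(f)=\int_{\operatorname{supp}\mu}f\,d\mu=0$. Since $\operatorname{supp}\mu$ is a closed subset of a Hausdorff space (so points remain closed) and $\widetilde{L}$ is represented by $\mu|_{\operatorname{supp}\mu}$, Theorem \ref{thm:main} applied with $S$ replaced by $\operatorname{supp}\mu$ produces a finitely atomic representing measure $\nu$ for $\widetilde{L}$ with $\operatorname{supp}\nu\subseteq\operatorname{supp}\mu$ and $\operatorname{card}\operatorname{supp}\nu\le\dim\widetilde{V}\le\dim V$; this $\nu$ also represents $L$. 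No continuity of the elements of $V$ and no integral Carath\'{e}odory theorem are needed. You should replace your second paragraph with this restriction argument.
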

The core variety can be used to describe the 
\textit{facial decomposition} of the cone $M$ of functionals in 
$V^{*}$ having representing measures. We use $\operatorname{relint}(K)$ to denote the \textit{relative interior} of a 
convex set $K$ (cf. \cite[Part II, Section 6]{rock}).

\begin{thm*}[Theorem \ref{prop:facial}]
Let $L \in M$. Let $F_L$ be the set of all linear functionals in $M$ whose core variety is contained in the core variety of $L$:
$$F_L=\{m \in M \,\,\, \mid \,\,\, \mathcal{CV}(m) \subseteq \mathcal{CV}(L)  \}.$$
Then $F_L$ is a face of $M$, and $m \in \operatorname{relint} F_L$ if and only if $\mathcal{CV}(m)=\mathcal{CV}(L)$. Furthermore, any face of $M$ has the form $F_L$ for some $L\in M$.
\end{thm*}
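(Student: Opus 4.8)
The plan is to establish the three assertions of Theorem~\ref{prop:facial} in turn: that $F_L$ is a face of $M$, that its relative interior is characterized by equality of core varieties, and that every face arises this way. The crucial structural fact I would draw on throughout is the main theorem (Theorem~\ref{thm:main}): for $m \in M$, the core variety $\mathcal{CV}(m)$ equals the union of supports of finitely atomic representing measures, so membership in $M$ and containment of core varieties can both be read off from atomic representations. In particular, since finitely atomic measures correspond to conical combinations of point evaluations $L_s : f \mapsto f(s)$, I expect $\mathcal{CV}(m)$ to behave additively: if $m = m_1 + m_2$ with $m_i \in M$, then gluing atomic representing measures shows $\mathcal{CV}(m) \supseteq \mathcal{CV}(m_1) \cup \mathcal{CV}(m_2)$, and the reverse containment for points actually used in \emph{every} representation requires more care.

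First I would verify that $F_L$ is a convex cone (immediate from the definition, using that $\mathcal{CV}(\lambda m) = \mathcal{CV}(m)$ for $\lambda > 0$ and that core varieties add under sums as above, so $\mathcal{CV}(m_1 + m_2) \subseteq \mathcal{CV}(L)$ whenever each $m_i \in F_L$). The key face property is: if $m_1 + m_2 \in F_L$ with $m_1, m_2 \in M$, then $m_1, m_2 \in F_L$. This follows from the additivity $\mathcal{CV}(m_1 + m_2) = \mathcal{CV}(m_1) \cup \mathcal{CV}(m_2)$, since then $\mathcal{CV}(m_i) \subseteq \mathcal{CV}(m_1+m_2) \subseteq \mathcal{CV}(L)$. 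Thus the decisive lemma to prove first is precisely this additivity of core varieties under sums in $M$; I expect to prove $\supseteq$ by concatenating atomic measures and $\subseteq$ by showing that if a point $s$ lies in the support of some representing measure for $m_1 + m_2$, one can split that measure compatibly with the decomposition, invoking the theorem that $\mathcal{CV}$ is the union over \emph{all} finitely atomic supports.

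Next, for the relative interior characterization, I would argue that $m \in \operatorname{relint} F_L$ exactly when $m$ "uses all of $\mathcal{CV}(L)$." The forward direction: a relative interior point of a face can be written so that its atomic support is maximal among elements of $F_L$, and since $L \in F_L$ realizes the full $\mathcal{CV}(L)$, by the additivity lemma one builds an element of $F_L$ whose core variety is all of $\mathcal{CV}(L)$; a relative interior point must dominate it, forcing $\mathcal{CV}(m) = \mathcal{CV}(L)$. Concretely, I would take $m' = m + \epsilon L$: for small $\epsilon$ this moves toward $L$ inside $F_L$, and being in the relative interior means $m$ itself can be perturbed in the direction of $L$ while staying in $F_L$, which via additivity yields $\mathcal{CV}(m) \supseteq \mathcal{CV}(L)$, hence equality. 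Conversely, if $\mathcal{CV}(m) = \mathcal{CV}(L)$, then $m$ dominates every generator's core variety, and a standard finite-dimensional convexity argument (expressing any $m' \in F_L$ as $m' = m + (m' - m)$ and checking $m - \delta(m'-m) \in F_L$ for small $\delta > 0$ using additivity and $\mathcal{CV}(m') \subseteq \mathcal{CV}(m)$) places $m$ in $\operatorname{relint} F_L$.

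Finally, for surjectivity of $L \mapsto F_L$ onto faces, I would take an arbitrary face $G$ of $M$, pick any $L \in \operatorname{relint} G$, and show $G = F_L$. The inclusion $G \subseteq F_L$ follows because for $m \in G$ the relative interior point $L$ dominates $m$ in the face, giving $\mathcal{CV}(m) \subseteq \mathcal{CV}(L)$ by additivity. For $F_L \subseteq G$, given $m \in F_L$ with $\mathcal{CV}(m) \subseteq \mathcal{CV}(L)$, the additivity lemma shows $\mathcal{CV}(L + m) = \mathcal{CV}(L)$, so $L + m$ lies along a segment that keeps $L$ in the relative interior of the face it generates; since $L \in \operatorname{relint} G$ and $G$ is a face, a short segment through $L$ in the direction of $m$ stays in $G$, forcing $m \in G$. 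The main obstacle I anticipate is the additivity lemma $\mathcal{CV}(m_1 + m_2) = \mathcal{CV}(m_1) \cup \mathcal{CV}(m_2)$ — specifically the containment $\subseteq$, which amounts to showing that no new points enter the core variety under summation; this requires carefully combining the iterative zero-set construction defining $\mathcal{CV}$ with the atomic-support description from Theorem~\ref{thm:main}, and is where the assumption that $P$ contains a strictly positive function will be essential to rule out degeneracies.
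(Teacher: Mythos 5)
Your plan hinges on the additivity lemma $\CV(m_1+m_2)=\CV(m_1)\cup\CV(m_2)$ for $m_1,m_2\in M$, and the inclusion $\subseteq$ --- which you correctly flag as the main obstacle --- is in fact false. Take $S=\RR$, $V=\mathcal{P}_2$ (univariate polynomials of degree at most $2$), $m_1=L_0$ and $m_2=L_1$ the point evaluations at $0$ and $1$. Then $\CV(m_1)\cup\CV(m_2)=\{0,1\}$, but $m_1+m_2$ is strictly positive on $P$: its moment matrix is positive definite, and every nonnegative univariate polynomial of degree at most $2$ is a sum of squares of linear polynomials. Hence Corollary \ref{cor:int} and Theorem \ref{thm:main} give $\CV(m_1+m_2)=\RR$. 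Only the inclusion $\supseteq$ survives (concatenate finitely atomic representing measures and apply Theorem \ref{thm:main}), and that half does suffice for the face property (if $m_1+m_2\in F_L$ then $\CV(m_i)\subseteq\CV(m_1+m_2)\subseteq\CV(L)$) and for the implication $m\in\operatorname{relint}F_L\Rightarrow\CV(m)=\CV(L)$ (write a positive multiple of $m$ as $\delta L$ plus another element of $M$). But the remaining steps of your outline all invoke the false inclusion: the convexity of $F_L$ (you need $\CV(m_1+m_2)\subseteq\CV(L)$ when each $\CV(m_i)\subseteq\CV(L)$), the claim $\CV(L+m)=\CV(L)$ in the surjectivity argument, and the converse relative-interior direction --- where, in addition, you never justify why $(1+\delta)m-\delta m'$ should have a representing measure at all.

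The paper routes around this by proving the nontrivial identity $F_L=H^\perp\cap M$, where $H$ is the subspace of functions vanishing on $\CV(L)$; the inclusion $H^\perp\cap M\subseteq F_L$ is obtained by descending through the iterative construction, using at each stage a function in $\ker L$ that vanishes on $S_k$ and is strictly positive on $S_{k-1}\setminus S_k$. Since this characterization of $F_L$ is a linear condition intersected with $M$, convexity is immediate, and it is also what makes your intended statements about sums true a posteriori: $\CV(m_1+m_2)$ is contained in the smallest core variety containing $\CV(m_1)\cup\CV(m_2)$, not in the union itself. For $L\in\operatorname{relint}F_L$ the paper passes to the quotient $V_k=\{f|_{S_k}\mid f\in V\}$, where the induced functional $\widetilde L$ is strictly positive and hence interior to the cone of representable functionals by Corollary \ref{cor:int}; this open condition is precisely what licenses the perturbation $t\widetilde L+(1-t)\widetilde J$ for $t$ near $1$, the step missing from your sketch. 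You would need to supply both of these ingredients to repair the argument.
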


Results of  \cite{F2} illustrate some cases where the iterative construction of $\mathcal{CV}(L)$ can be
carried out explicitly in examples, but in general it is difficult to compute the core variety.
The following two results do, however, shed light on the behavior of the iterative construction.
The first shows a geometric termination criterion for the iterative construction of the core variety for functionals in $M$.

\begin{proposition*}[Proposition \ref{prop:faces}]
Let $L\in V^*$ be a linear functional with a representing measure. Then $\CV(L)=S_1\,\,\,(=\mathcal{Z}(p \in P \,\,\, \mid \,\,\, L(p)=0))$ if and only if $F_L$ is an exposed face of $M$.
\end{proposition*}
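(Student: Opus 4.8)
The plan is to identify the exposed faces of the cone $M$ explicitly and then match them against the faces $F_L$ produced by Theorem \ref{prop:facial}. The first step is to compute the dual cone $M^{*}\subseteq V^{**}=V$. Since points of $S$ are closed, every point evaluation $\delta_{s}$ ($s\in S$) lies in $M$ (its representing measure is the Dirac measure at $s$), so a function $g\in V$ satisfies $m(g)\ge 0$ for all $m\in M$ if and only if $g(s)=\delta_s(g)\ge 0$ for every $s$, i.e. $g\in P$; conversely, integrating a nonnegative function against any representing measure gives $m(g)\ge 0$. Hence $M^{*}=P$, and the exposed faces of $M$ are precisely the sets $\{m\in M \mid m(g)=0\}$ with $g\in P$. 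The whole problem thus reduces to understanding these sets.

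The key technical lemma I would prove is that for $g\in P$ and $m\in M$ one has $m(g)=0$ if and only if $\CV(m)\subseteq \mathcal{Z}(g)$. For the forward implication, apply Theorem \ref{thm:main}: any nonzero $m\in M$ has a finitely atomic representing measure $\nu=\sum_i c_i\delta_{s_i}$ with $c_i>0$ and $\{s_i\}\subseteq \CV(m)$, and since $\CV(m)$ is the union of the supports of all such $\nu$, the equation $0=m(g)=\sum_i c_i g(s_i)$ with $g(s_i)\ge 0$ forces $g(s_i)=0$, giving $\CV(m)\subseteq\mathcal{Z}(g)$; the reverse implication is the same computation read backwards (the case $m=0$ being trivial since $\CV(0)=\emptyset$). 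Alongside this I would record a purely convex-geometric fact: if $F:=P\cap\ker L$ and $f\in\operatorname{relint}F$, then $\mathcal{Z}(f)=\mathcal{Z}(F)=S_1$. Indeed $\mathcal{Z}(F)\subseteq\mathcal{Z}(f)$ is clear, and for the reverse, writing $f=t g+(1-t)h$ with $g,h\in F$ and $t\in(0,1)$ (possible since $f$ lies in the relative interior), the equality $0=f(s)=tg(s)+(1-t)h(s)$ at $s\in\mathcal{Z}(f)$ forces $g(s)=0$ because $g,h\ge 0$; thus $\mathcal{Z}(f)\subseteq\mathcal{Z}(g)$ for every $g\in F$.

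With these in hand both directions fall out. For the implication $\CV(L)=S_1\Rightarrow F_L$ exposed, choose $f\in\operatorname{relint}(P\cap\ker L)$; then $\mathcal{Z}(f)=S_1=\CV(L)$, and the lemma gives $\{m\mid m(f)=0\}=\{m\mid\CV(m)\subseteq S_1\}=\{m\mid\CV(m)\subseteq\CV(L)\}=F_L$, so $F_L$ is exposed by $f\in P=M^{*}$. For the converse, suppose $F_L=\{m\in M\mid m(g)=0\}$ for some $g\in P$. Since $L\in F_L$ we get $L(g)=0$, hence $g\in F=P\cap\ker L$ and $\mathcal{Z}(g)\supseteq\mathcal{Z}(F)=S_1$. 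As $\CV(L)\subseteq S_1$ always (the sets $S_i$ are nested), it remains to prove $S_1\subseteq\CV(L)$: for any $s_0\in S_1$ we have $g(s_0)=0$, so $\delta_{s_0}(g)=0$ and $\delta_{s_0}\in F_L$, whence $\CV(\delta_{s_0})\subseteq\CV(L)$; but $s_0\in\operatorname{supp}\delta_{s_0}\subseteq\CV(\delta_{s_0})$, so $s_0\in\CV(L)$.

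I expect the main obstacle to be the technical lemma relating the linear condition $m(g)=0$ to the inclusion $\CV(m)\subseteq\mathcal{Z}(g)$, since this is exactly where Theorem \ref{thm:main}—specifically the existence of finitely atomic representing measures whose supports exhaust $\CV(m)$—must be invoked. The remaining ingredients ($M^{*}=P$, the relative-interior computation, and the nesting $\CV(L)\subseteq S_1$) are routine, though one must handle with care the degenerate cases $m=0$ and $F=\{0\}$ (where $f=0$ and $S_1=S$).
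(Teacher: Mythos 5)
Your proof is correct, and while it rests on the same two pillars as the paper's argument (Theorem \ref{thm:main}'s description of $\CV(m)$ as the union of supports of finitely atomic representing measures, and the facial correspondence of Theorem \ref{prop:facial}), you organize it differently. The paper works with the conjugate face $F^{\triangle}_L=\{p\in P \mid m(p)=0 \text{ for all } m\in F_L\}$ and quotes the fact that the double conjugate $F'_L$ is the minimal exposed face containing $F_L$, then shows $F_L=F'_L$ fails exactly when $\CV(L)\subsetneq S_1$ by perturbing $L$ to $L+L_v$ for $v\in S_1\setminus\CV(L)$. You instead make the duality fully explicit: you compute $M^*=P$, characterize the exposed faces of $M$ as the sets $\{m\in M\mid m(g)=0\}$ with $g\in P$, and prove the clean reusable equivalence $m(g)=0 \Leftrightarrow \CV(m)\subseteq\mathcal{Z}(g)$, after which the exposing functional for $F_L$ is produced concretely as a relative-interior point $f$ of $P\cap\ker L$ (whose zero set is exactly $S_1$), and the converse follows by feeding the point evaluations $\delta_{s_0}$, $s_0\in S_1$, into the exposed-face description rather than perturbing $L$. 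The substance is the same -- your $F=P\cap\ker L$ coincides with the paper's $F^{\triangle}_L$, and your relative-interior computation is what underlies the paper's claim about $F'_L$ -- but your version is more self-contained, avoids citing the general theory of conjugate/minimal exposed faces, and your handling of the degenerate cases ($m=0$, $P\cap\ker L=\{0\}$) is explicit where the paper leaves it implicit.
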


We next apply the preceding result to the case where some iterate $S_{k}$ is finite.

 \begin{proposition*}[Proposition \ref{prop:term}]
 Suppose that $S_k$ is a finite set of points. Then $\mathcal{CV}(L)=S_k$ or $\mathcal{CV}(L)=S_{k+1}$, i.e., the iterative construction of the core variety is guaranteed to terminate in at most one more step.
 \end{proposition*}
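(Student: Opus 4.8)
The plan is to prove the sharper statement $S_{k+1}=S_{k+2}$. By Theorem~\ref{thm:stabilize} the iteration is nested, $S_0\supseteq S_1\supseteq\cdots$, and since $S_{i+1}$ depends only on $S_i$ (via $T\mapsto\mathcal{Z}(p\in V\mid L(p)=0,\ p\ge 0\text{ on }T)$), any coincidence $S_{k+1}=S_{k+2}$ is a fixed point and therefore propagates, giving $\CV(L)=S_{k+1}$. That already yields the dichotomy: if $S_k=S_{k+1}$ then $\CV(L)=S_k$, and otherwise $\CV(L)=S_{k+1}$. Since $S_{k+1}\subseteq S_k$ and $S_k$ is finite, I will write $S_k=\{s_1,\dots,s_m\}$ and, after relabeling, $S_{k+1}=\{s_1,\dots,s_r\}$ with $r\le m$. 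The inclusion $S_{k+2}\subseteq S_{k+1}$ is automatic from nesting, so the entire content is the reverse inclusion: each $s_j$ with $j\le r$ must lie in $S_{k+2}$, i.e. every $p\in V$ with $L(p)=0$ and $p\ge 0$ on $S_{k+1}$ must vanish at $s_j$.

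The key step is a Farkas-type witness-summing argument. Suppose, for contradiction, that some $s_{j_0}\in S_{k+1}$ fails to lie in $S_{k+2}$: there is $p\in V$ with $L(p)=0$, $p\ge 0$ on $S_{k+1}$, and $p(s_{j_0})>0$. The only obstruction to $p$ itself exhibiting $s_{j_0}\notin S_{k+1}$ is that $p$ might be negative at some of the points $s_{r+1},\dots,s_m$ of $S_k\setminus S_{k+1}$. For each such $s_i$ ($i>r$), the definition of $S_{k+1}$ supplies a witness $q_i\in V$ with $L(q_i)=0$, $q_i\ge 0$ on $S_k$, and $q_i(s_i)>0$ (strict, since $q_i\ge 0$ on $S_k\ni s_i$). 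Because $S_k$ is finite I may form the single function $Q:=\sum_{i=r+1}^m q_i\in V$, which satisfies $L(Q)=0$, $Q\ge 0$ on $S_k$, and $Q(s_i)>0$ for every $i>r$.

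I then correct $p$ by a large multiple of $Q$. Set $\tilde p:=p+NQ$. Then $L(\tilde p)=0$ for all $N$; on the finitely many points $s_i$ with $i>r$ one has $Q(s_i)>0$, so for $N$ large enough $\tilde p(s_i)\ge 0$ there, while on $s_1,\dots,s_r$ both $p$ and $Q$ are already nonnegative, whence $\tilde p\ge 0$ on all of $S_k$. Finally $\tilde p(s_{j_0})=p(s_{j_0})+N\,Q(s_{j_0})\ge p(s_{j_0})>0$, since $j_0\le r$ forces $Q(s_{j_0})\ge 0$. Thus $\tilde p$ has $L(\tilde p)=0$, is nonnegative on $S_k$, and is strictly positive at $s_{j_0}$, so $s_{j_0}\notin\mathcal{Z}(p\in V\mid L(p)=0,\ p\ge 0\text{ on }S_k)=S_{k+1}$, contradicting $s_{j_0}\in S_{k+1}$. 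This establishes $S_{k+1}=S_{k+2}$.

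The only place where finiteness is genuinely used is this witness-summing step, which I expect to be the sole obstacle: I need one witness per point of $S_k\setminus S_{k+1}$ and a single scalar $N$ dominating the negative values of $p$ at finitely many points, neither of which is available for infinite $S_k$. It is worth noting that the argument invokes no representing-measure hypothesis on $L$ — in particular it covers the vacuous case $S_{k+1}=\emptyset$ — and so is logically independent of Proposition~\ref{prop:faces}.
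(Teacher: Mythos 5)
Your argument is correct, and it takes a genuinely different route from the paper. The paper's proof is convex-geometric: finiteness of $S_k$ makes the cone $M_k$ of functionals with representing measures supported on $S_k$ a finitely generated, hence polyhedral, cone, so it is closed and all of its faces are exposed; the proof then splits into cases according to whether $L$ lies in the interior of $M_k$ (then $\CV(L)=S_k$ by Corollary \ref{cor:int}), on its boundary (then $\CV(L)=S_{k+1}$ by Proposition \ref{prop:faces}), or outside $M_k$ (then closedness yields $p\in\ker L$ strictly positive on $S_k$, so $S_{k+1}=\emptyset$). You instead prove the fixed-point identity $S_{k+1}=S_{k+2}$ directly by a witness-summing argument: one witness $q_i$ per point of $S_k\setminus S_{k+1}$, summed to a single $Q\in\ker L$ nonnegative on $S_k$ and strictly positive exactly where needed, and then a large multiple $NQ$ added to repair any function certifying $s_{j_0}\notin S_{k+2}$ into one certifying $s_{j_0}\notin S_{k+1}$. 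Each step checks out, including the observation that $Q$ vanishes (hence is harmless) on $S_{k+1}$ and that the vacuous case $S_{k+1}=\emptyset$ is covered by nesting. What your approach buys is self-containedness and generality: it uses no duality, no representing-measure hypothesis on $L$, none of Corollary \ref{cor:int} or Proposition \ref{prop:faces}, and not even the Standing Assumption, so it fits the fully general setting of Remark \ref{rem:farm}. What the paper's approach buys is additional geometric information: it tells you \emph{which} of the two alternatives occurs, according to the position of $L$ relative to the polyhedral cone $M_k$. (One cosmetic point: the nesting $S_{i+1}\subseteq S_i$ is established in the discussion following the iterative definition rather than in Theorem \ref{thm:stabilize} itself.)
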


We prove a general version of the Truncated Riesz-Haviland Theorem in Theorem \ref{thm:trh} and derive some applications for the Full Moment Problem in a general setting. For the subsequent results, $S$ is a $\sigma$-compact, locally compact metric space; see  \cite[Chapter V, Appendix C]{Co}  for basic definition and properties.
Let $V$ be a linear subspace of the space $C(S)$ of continuous real-valued functions on $S$ which admits a decomposition as a countable union of increasing finite dimensional subspaces, i.e., $ V = \bigcup_{j=1}^{\infty} V_{j}$,
$V_{j-1}\subseteq V_{j}$, $\dim V_{j} < \infty$ ($j\ge 1$).  A function $f : S \rightarrow \RR$ \textit{vanishes at infinity} if, for each $\varepsilon >0$, there is a compact set $C_\varepsilon \subseteq S$ such that $S\setminus C_\varepsilon \subseteq  \{x \in S \mid |f(x)| < \varepsilon \}$. Let $C_{0}(S)$ denote the Banach space of all continuous functions on $S$ which vanish at infinity, equipped with the norm $|| f ||_{\infty} := \operatorname{sup}_{x\in S} |f(x)|$ (cf. \cite[p. 65]{Co}). We next present an extension of the Riesz-Haviland Theorem to our general setting.
\begin{thm*}[Theorem \ref{rh}] Suppose $1\in V_{1}$ and suppose that for each $j\ge 2$, there exists a strictly positive function $\rho_{j} \in V_{j}$
such that $\frac{q}{\rho_j}\in C_{0}(S)$ for all $q\in V_{j-1}$.
A linear functional $L:V\rightarrow \mathbb{R}$ has a representing measure if and only if
$L(p)\geq 0$ for any function $p\in V$ nonnegative on $S$.
\end{thm*}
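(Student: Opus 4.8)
The plan is to prove the nontrivial direction by reducing the full problem to the finite-dimensional truncated problems on the pieces $V_j$, solving each of these via the generalized Truncated Riesz--Haviland Theorem (Theorem \ref{thm:trh}), and then extracting a representing measure for $L$ as a weak limit of the resulting measures. The forward implication is immediate: if $\mu$ represents $L$, then for $p\in V$ with $p\ge 0$ on $S$ we have $L(p)=\int_S p\,d\mu\ge 0$ because $\mu\ge 0$.

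So assume $L(p)\ge 0$ for every $p\in V$ nonnegative on $S$. First I would work level by level. For fixed $j$, the restriction $L|_{V_j}$ is positive on $V_j$, since any $p\in V_j$ with $p\ge 0$ on $S$ lies in $V$, whence $L|_{V_j}(p)=L(p)\ge 0$; moreover $1\in V_1\subseteq V_j$ is a strictly positive function, so the standing hypothesis of our machinery holds on $V_j$. The key point is that $L|_{V_j}$ extends to the positive functional $L|_{V_{j+1}}$ on the larger space $V_{j+1}$; feeding this positive extension into Theorem \ref{thm:trh} produces a representing measure $\mu_j$ for $L|_{V_j}$, i.e.\ $\int_S f\,d\mu_j=L(f)$ for all $f\in V_j$. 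It is exactly here that the full problem differs from a single truncated problem: the availability of the next level $V_{j+1}$ supplies the positive extension that bare positivity on $V_j$ would not.

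Next I would obtain a representing measure for $L$ as a weak limit of the $\mu_j$. These measures all have the same finite mass $\mu_j(S)=\int_S 1\,d\mu_j=L(1)$, so the family is uniformly bounded, and the hypotheses on the $\rho_j$ give uniform tightness. Indeed, taking the index $j=1$, for every $k\ge 2$ the measure $\mu_k$ represents $L$ on $V_2\ni\rho_2$, and since $1\in V_1=V_{2-1}$ we have $1/\rho_2\in C_0(S)$. Thus, given $\varepsilon>0$, there is a compact $C\subseteq S$ with $\rho_2>1/\varepsilon$ off $C$, whence
$$\mu_k(S\setminus C)\le \varepsilon\int_{S\setminus C}\rho_2\,d\mu_k\le \varepsilon\int_S\rho_2\,d\mu_k=\varepsilon\,L(\rho_2)$$
uniformly in $k\ge 2$. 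Since $S$ is a $\sigma$-compact, locally compact metric space (hence Polish), uniform boundedness together with this tightness estimate yields, via Prokhorov's theorem, a subsequence $\mu_{k_l}$ converging weakly to a finite positive Borel measure $\mu$.

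Finally I would verify that $\mu$ represents $L$. Fix $f\in V_j$ and write $f=g\cdot\rho_{j+1}$ with $g:=f/\rho_{j+1}\in C_0(S)$. For $k_l\ge j+1$ we have $\int_S f\,d\mu_{k_l}=L(f)$, so it suffices to pass to the limit in $\int_S g\,\rho_{j+1}\,d\mu_{k_l}$ and identify the value with $L(f)$. The main obstacle is precisely this passage to the limit against the \emph{unbounded} continuous function $f$: ordinary weak convergence controls only integrals of bounded continuous functions, so I must combine the weak convergence of $\mu_{k_l}$ on compacta with the uniform tail bound above to rule out any loss or gain of mass at infinity. The boundedness of $g$, the uniform bound $\int_S\rho_{j+1}\,d\mu_k=L(\rho_{j+1})$ valid for $k\ge j+1$, and the tightness estimate together furnish the uniform integrability needed to upgrade weak convergence to convergence of $\int_S f\,d\mu_{k_l}$; this is the delicate, and essential, use of the vanishing-at-infinity hypothesis on $q/\rho_j$. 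Once established for every $j$, this gives $\int_S f\,d\mu=L(f)$ for all $f\in\bigcup_j V_j=V$, so $\mu$ represents $L$.
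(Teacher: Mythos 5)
Your argument is correct and follows the same skeleton as the paper's: both reduce the full problem to the truncated ones by observing that $L|_{V_{j}}$ admits the positive extension $L|_{V_{j+1}}$, solve each truncation by the generalized Truncated Riesz--Haviland theorem, and then extract a representing measure for $L$ as a limit of the $\mu_j$. (Strictly speaking, the citation you want for the truncation step is Corollary \ref{cor:trh} rather than Theorem \ref{thm:trh} itself, since the hypothesis of Theorem \ref{thm:trh} on the functionals in $[\widetilde{S}^*]\setminus\widetilde{S}^*$ is exactly what the vanishing-at-infinity condition on $q/\rho_{j+1}$ is used to verify.) Where you genuinely differ is in the limit step, which the paper isolates as Theorem \ref{stochel}. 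The paper works in $C_0(S)^*$ via Alaoglu and separability; since weak-* limits there may lose mass at infinity, it recovers the moments by showing that the renormalized functionals $\widehat{\rho_{k}d\mu_{j}}$ are bounded and converge on the dense subspace $C_c(S)$, hence converge weak-* to $\widehat{\rho_{k}d\mu}$, and then tests against $q_i/\rho_k \in C_0(S)$. You instead invoke Prokhorov's theorem, using $1/\rho_2 \in C_0(S)$ for uniform tightness, which buys genuine weak convergence with no loss of mass, and then upgrade to convergence of $\int f\,d\mu_{k_l}$ for the unbounded $f\in V_j$ via the uniform bound $\int\rho_{j+1}\,d\mu_k = L(\rho_{j+1})$. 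Both routes use the hypotheses on the $\rho_j$ in the same two places and both are sound; the paper's is marginally more elementary in that it needs only the Riesz representation of $C_0(S)^*$ and never Polishness of $S$ or the Portmanteau/Prokhorov machinery. One detail worth writing out in your final paragraph: to combine weak convergence with the tail bound you should test against $f\phi$ for a continuous compactly supported cutoff $\phi$ equal to $1$ on the compact set where $|f|/\rho_{j+1}\ge\varepsilon$ (the indicator $f\chi_C$ is not continuous), after which the estimate closes exactly as you indicate.
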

To recover the Riesz-Haviland Theorem from this result, let $S = \mathbb{R}^{n}$, 
          $V = \mathbb{R}[x_{1},\ldots,x_{n}]$, and for $j\ge 1$, let $V_{j} = \mathcal{P}_{2j}$ and
            $\rho_{j} = 1 + ||x||^{2j}$.

Let $L$ be a linear functional on $V$. We now define the
\textit{core variety} of $L$ by $\mathcal{CV}(L) := \bigcap_{j=1}^{\infty} \mathcal{CV}(L_{j})$.
We conclude with an application of the core variety to the Full Moment Problem.
\begin{thm*}[Theorem \ref{gcv}]
A linear functional $L:V \rightarrow \mathbb{R}$ has a representing measure if and only if 
$\mathcal{CV}(L) \not = \emptyset$.
\end{thm*}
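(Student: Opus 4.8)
The plan is to deduce Theorem \ref{gcv} by combining the finite-dimensional core variety theorem (Theorem \ref{thm:main}), applied to each truncation $L_j := L|_{V_j}$, with the general Riesz--Haviland theorem (Theorem \ref{rh}). At the outset I would record that each $V_j$ satisfies the hypotheses of Theorem \ref{thm:main}: $S$ is a $\sigma$-compact, locally compact metric space, hence $T_1$ and Hausdorff; the functions in $V_j \subseteq C(S)$ are Borel measurable; and $P_{V_j}$ contains a strictly positive function ($\rho_j$ for $j\ge 2$, and $1$ for $j=1$). I would also note that every representing measure $\mu$ for $L$ is finite, since $1\in V_1$ forces $L(1)=\mu(S)<\infty$, and hence $\mu$ is a Radon measure on the metric space $S$; thus by the continuous/Radon case of Theorem \ref{thm:main}(2) we have $\operatorname{supp}\mu \subseteq \mathcal{CV}(L_j)$ for every $j$.

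For the forward implication, suppose $L\neq 0$ has a representing measure $\mu$. Restricting to $V_j$, the same $\mu$ represents $L_j$, so by the observation above $\operatorname{supp}\mu \subseteq \mathcal{CV}(L_j)$ for all $j$; intersecting gives $\emptyset \neq \operatorname{supp}\mu \subseteq \bigcap_{j} \mathcal{CV}(L_j) = \mathcal{CV}(L)$. This is precisely the step where the support-containment half of Theorem \ref{thm:main} is essential: merely knowing that each $\mathcal{CV}(L_j)$ is nonempty would not produce a point common to all of them.

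For the reverse implication, suppose $\mathcal{CV}(L)\neq\emptyset$; then $\mathcal{CV}(L_j)\supseteq\mathcal{CV}(L)\neq\emptyset$ for every $j$, so each $L_j\neq 0$. By Theorem \ref{thm:main}(2), for each $j$ there is a sign $\epsilon_j\in\{+1,-1\}$ for which $\epsilon_j L_j$ has a representing measure; this sign is unique, since if $\mu_+$ represented $L_j$ and $\mu_-$ represented $-L_j$ then $\int\rho_j\,d\mu_+ = L_j(\rho_j) = -\int\rho_j\,d\mu_- $ would force $L_j(\rho_j)=0$, whence $\mu_+=0$ (as $\rho_j>0$) and $L_j=0$, a contradiction. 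I would then show the signs are consistent across levels: restricting a representing measure for $\epsilon_j L_j$ to the subspace $V_{j-1}$ exhibits a representing measure for $\epsilon_j L_{j-1}$, so by uniqueness $\epsilon_{j-1}=\epsilon_j$. Hence all $\epsilon_j$ equal a common sign $\epsilon$, and (as in Theorem \ref{thm:main}, up to replacing $L$ by $-L$) we may take $\epsilon=+1$, so that every $L_j$ has a representing measure. Consequently $L_j(p)\ge 0$ for all $p\in V_j$ nonnegative on $S$; since each nonnegative $p\in V$ lies in some $V_j$, we obtain $L(p)\ge 0$ for all $p\in V$ nonnegative on $S$, and Theorem \ref{rh} then furnishes a representing measure for $L$.

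The main obstacle is the bookkeeping in the reverse direction: Theorem \ref{thm:main} certifies only $\pm L_j$, not $L_j$ itself, at each finite level, so one must check that these finite-level normalizations are mutually compatible before invoking Theorem \ref{rh}, whose positivity hypothesis must hold simultaneously for \emph{all} nonnegative $p\in V$. The nesting $V_{j-1}\subseteq V_j$ and restriction of representing measures is exactly the mechanism that glues the levels together and passes to the limit. (As with Theorem \ref{thm:main}, the statement is read up to replacing $L$ by $-L$, and the trivial case $L=0$, where $\mathcal{CV}(L)=\emptyset$ yet the zero measure represents $L$, is excluded.)
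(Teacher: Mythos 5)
Your proof is correct, and the forward direction is exactly the paper's: restrict $\mu$ to each $V_j$, invoke the Radon/continuous case of Theorem \ref{thm:main}(2) to get $\operatorname{supp}\mu\subseteq\mathcal{CV}(L_j)$, and intersect. In the reverse direction you take a slightly longer route than the paper. The paper simply notes that each $\mathcal{CV}(L_j)\neq\emptyset$ gives a finitely atomic representing measure for each $L_j$ and then applies Theorem \ref{stochel} (the Stochel-type gluing theorem) directly; you instead pass from ``each $L_j$ is represented'' to $V$-positivity of $L$ and then invoke Theorem \ref{rh}. Since Theorem \ref{rh} is itself deduced from Corollary \ref{cor:trh} and Theorem \ref{stochel}, your argument rests on the same machinery but inserts an unnecessary intermediate step; the direct appeal to Theorem \ref{stochel} is cleaner. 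On the other hand, your sign bookkeeping is a genuine improvement in care: Theorem \ref{thm:main}(2) only certifies a representing measure for $L_j$ or $-L_j$, the paper's proof of Theorem \ref{gcv} silently assumes the favorable sign, and indeed the statement as written fails for $L=-L_s$ (where $\mathcal{CV}(L)=\mathcal{CV}(L_s)\neq\emptyset$ by the sign-invariance of the construction noted in Remark \ref{rem:2.8}, yet $L(1)<0$ rules out a positive representing measure). Your uniqueness-of-sign argument via $\rho_j>0$ and the consistency-across-levels argument via restriction are both correct, and your closing caveat that the theorem must be read up to replacing $L$ by $-L$ is exactly the right fix.
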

\section{Core Variety}

We begin by establishing a few preliminaries. Since the cone $P_{V}$ is pointed, the dual cone $P_V^*\subset V^*$ is full-dimensional in the dual vector space $V^*$ (\cite{barv}, p.49). 
Let $M$ be the cone of functionals in $V^*$ which have representing measures: $$M=\{L \in V^* \,\,\, \mid \,\,\, L \,\, \text{  has a representing measure}\}.$$

\noindent We observe that $M$ is a convex cone in $V^*$, but $M$ may fail to be closed {\cite{EF}.

For a point $s \in S$, let $L_s \in V^*$ denote the corresponding point evaluation functional on $V$: $L_s(f)=f(s)$. Since the points of $S$ are closed, each linear functional $L_s$ has a representing 
measure, namely the Dirac $\delta$-measure concentrated at $s$. 
 Let $C$ be the set of all functionals coming from finitely atomic measures on $S$:
$$C=\operatorname{ConicalHull} \{L_s \, \mid \, s\in S\}.$$
It is clear that $C$ is a convex cone in $V^*$ and $C \subseteq M$. Like $M$, the cone $C$ may fail to be closed, and we will later show in Corollary \ref{cor:BT} that $C=M$. Note that by Carath\'{e}odory's Theorem, 
an element of $C$ may be represented as a conical combination of
at most $\dim V$ point evaluations.

For a positive Borel measure $\mu$ on $S$, for which $V \subseteq L_{1}(\mu)$, we define the
linear functional $L_{\mu}\in V^{*}$ to be  integration with respect to $\mu$:  $L_{\mu}(f) = \int_S f d\mu$. 
Recall that the \textit{support} of $\mu$, denoted by
$\operatorname{supp}\mu$,  is defined as the set of all points $s\in S$ such that each open set $U$ containing $s$ satisfies $\mu(U) > 0$; note that $\operatorname{supp}\mu$ is closed, hence Borel. Since points of $S$ are closed, $\operatorname{supp} \mu$ is finite if and only if $\mu$ is finitely atomic, i.e., $L_\mu$ belongs to $C$.

We begin by describing the closure and interior of $C$.

\begin{proposition}\label{lemma:interior}
The closure $\overline{C}$ of $C$ is equal to the dual cone of $P$:
$$\overline{C}=P^*.$$
\end{proposition}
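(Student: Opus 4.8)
The plan is to identify $\overline{C}$ as the double dual cone of $C$ and then compute the intermediate dual explicitly. Throughout I work with the natural pairing $\langle L, f\rangle = L(f)$ between $V^*$ and $V$; since $V$ is finite dimensional this pairing identifies $(V^*)^*$ with $V$, so dual cones may be taken back and forth between $V$ and $V^*$ without ambiguity.

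First I would record the easy inclusion $C \subseteq P^*$. For each $s \in S$ and each $p \in P$ we have $L_s(p) = p(s) \ge 0$, so every generator $L_s$ lies in $P^*$. As $P^*$ is a closed convex cone and $C$ is by definition the conical hull of the $L_s$, it follows that $C \subseteq P^*$, and hence $\overline{C} \subseteq P^*$. For the reverse inclusion I would compute the dual cone of $C$ inside $V$, namely
$$C^* := \{\, f \in V \mid \langle L, f\rangle \ge 0 \text{ for all } L \in C \,\}.$$
Because $C$ is the conical hull of the point evaluations, an element $L \in C$ is a nonnegative combination of the $L_s$, so $\langle L, f\rangle \ge 0$ for every $L \in C$ holds exactly when $f(s) = L_s(f) \ge 0$ for every $s \in S$; that is, $C^* = P$ precisely. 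Invoking the bipolar theorem for cones in finite dimensions — which asserts that the double dual of a convex cone recovers its closure, $(C^*)^* = \overline{C}$ — then yields $\overline{C} = (C^*)^* = P^*$, as desired.

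The only substantive point is the bipolar identity $(C^*)^* = \overline{C}$, and I expect the cleanest self-contained justification to be a separating-hyperplane argument. If some $L_0 \in P^*$ failed to lie in the closed convex cone $\overline{C}$, a separation theorem would produce $f \in V$ with $\langle L, f\rangle \ge 0$ for all $L \in \overline{C}$ but $\langle L_0, f\rangle < 0$; the first condition forces $f \in C^* = P$, whereupon $L_0 \in P^*$ gives $\langle L_0, f\rangle \ge 0$, a contradiction. This is the step where finite-dimensionality and the closedness of $\overline{C}$ are genuinely used, and it is the main (though entirely standard) obstacle. I note that the argument makes no use of the standing hypothesis that $P$ contains a strictly positive function, so the identity $\overline{C} = P^*$ holds in this generality.
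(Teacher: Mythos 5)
Your argument is correct and follows essentially the same route as the paper: compute $C^* = P$ from the definition of $C$ as the conical hull of point evaluations, then apply the bipolar theorem to get $\overline{C} = (C^*)^* = P^*$. The paper simply cites the bipolar theorem rather than sketching the separating-hyperplane proof, but that is the only difference.
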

\begin{proof}
By the identification of $V$ with $V^{**}$ we can express the dual cone $C^*$ as
$$C^*=\{f \in V \,\,\, \mid \,\,\, L_s(f)=f(s) \geq 0 \,\,\, \text{for all} \,\,\, s \in S\}=P.$$
By the bipolarity theorem (\cite[Exercise 2.31(f)]{BV}) we have $\overline{C}=(C^*)^*$, so if follows that 
$\overline{C}=P^*$.
\end{proof}

Recall that a linear functional $L \in V^*$ is 
said to be
\textit{strictly positive} if $L(f) > 0$ whenever $f$ is a non-zero function in $P$ (cf. \cite{FN1}). 
Let $SP$ denote the convex cone of all strictly positive functionals in $V^{*}$. We next show that the interiors of $C$ and $M$ agree, and that these interiors coincide with $SP$.

\begin{cor} \label{cor:int} Let $P$ be the cone of nonnegative functions in $V$, let $M$ be the cone of linear functionals in $V^*$ which have 
representing measures, and let $C$ be the conical hull of the point evaluation functionals. Then
 $$\operatorname{int} (C)=\operatorname{int} (M)=
\operatorname{int}(P^*) = SP \equiv
\{L \in V^* \,\mid \, L(f)>0 \,\,\, \text{for all non-zero} \,\,\, f \in P\}.$$
 In particular, any linear functional 
$L$
strictly positive on $P$ has a finitely atomic representing measure, and the union of all supports of finitely atomic measures representing $L$ is $S$.
 \end{cor}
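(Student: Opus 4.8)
The plan is to exploit the sandwich $C \subseteq M \subseteq P^{*}=\overline{C}$. The outer equality is Proposition~\ref{lemma:interior}; the inclusion $C\subseteq M$ is already observed in the text, and $M\subseteq P^{*}$ holds because any $L$ with representing measure $\mu$ satisfies $L(f)=\int_{S}f\,d\mu\ge 0$ for every $f\in P$. Since all three cones share the same closure $P^{*}$, a standard convex-geometry fact will force their interiors to coincide; it then remains to identify this common interior with $SP$ and to read off the two concrete consequences.

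First I would show $\operatorname{int}(C)=\operatorname{int}(P^{*})$. Because $P$ is pointed, $P^{*}=\overline{C}$ is full-dimensional, so $\operatorname{aff}(C)=\operatorname{aff}(\overline{C})=V^{*}$ and $\operatorname{int}$ coincides with $\operatorname{relint}$ on both $C$ and $\overline C$. Using the elementary identity $\operatorname{relint}(A)=\operatorname{relint}(\overline{A})$ for a convex set $A$, applied with $A=C$, gives $\operatorname{int}(C)=\operatorname{relint}(C)=\operatorname{relint}(\overline{C})=\operatorname{int}(P^{*})$. Monotonicity of interiors along $C\subseteq M\subseteq P^{*}$ then yields $\operatorname{int}(C)\subseteq\operatorname{int}(M)\subseteq\operatorname{int}(P^{*})=\operatorname{int}(C)$, so the three interiors agree.

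Next I would prove $\operatorname{int}(P^{*})=SP$, taking care to avoid circularity. For $\operatorname{int}(P^{*})\subseteq SP$, suppose $L\in\operatorname{int}(P^{*})$ but $L(f)=0$ for some nonzero $f\in P$; choosing $\psi\in V^{*}$ with $\psi(f)>0$, we get $L-\varepsilon\psi\in P^{*}$ for small $\varepsilon>0$ yet $(L-\varepsilon\psi)(f)=-\varepsilon\psi(f)<0$, a contradiction, so $L\in SP$. This inclusion already shows that the (nonempty) set $\operatorname{int}(P^{*})$ consists of strictly positive functionals; fixing one such $\ell_{0}$, the base $B=\{f\in P\mid \ell_{0}(f)=1\}$ is compact, and every nonzero $f\in P$ is a positive multiple of a point of $B$. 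For the reverse inclusion $SP\subseteq\operatorname{int}(P^{*})$, any $L\in SP$ is continuous and strictly positive on the compact set $B$, hence bounded below by some $\delta>0$ there; every $L'$ sufficiently close to $L$ then remains positive on $B$, hence on all of $P\setminus\{0\}$, so a whole neighborhood of $L$ lies in $P^{*}$.

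Finally I would extract the concrete statements. If $L$ is strictly positive on $P$, then $L\in SP=\operatorname{int}(C)\subseteq C$, so $L$ is a conical combination of point evaluations, i.e.\ it has a finitely atomic representing measure. For the support claim, fix $s\in S$; since $P$ contains a strictly positive function $\rho$, we have $L_{s}(\rho)=\rho(s)>0$, so $L_{s}\neq 0$. As $L\in\operatorname{int}(C)$, a ball about $L$ lies in $C$, whence $L-aL_{s}\in C$ for all sufficiently small $a>0$; representing $L-aL_{s}$ by a finitely atomic measure $\nu'$, the measure $a\delta_{s}+\nu'$ represents $L$ and charges $s$. As $s\in S$ was arbitrary and every such support lies in $S$, the union of supports equals $S$. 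I expect this last step---producing, for each point, a representing measure charging it---to be the substantive part of the argument, since everything preceding it is routine once the interior identifications are established.
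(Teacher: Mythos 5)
Your proof is correct and follows essentially the same route as the paper: the sandwich $C \subseteq M \subseteq P^{*} = \overline{C}$ from Proposition~\ref{lemma:interior}, identification of the common interior with $SP$ via a small perturbation argument, and then, for each $s\in S$, perturbing $L$ by $-aL_{s}$ inside the open cone to produce a finitely atomic measure charging $s$. The only divergence is that you supply an explicit compact-base argument for $SP \subseteq \operatorname{int}(P^{*})$ where the paper cites a textbook exercise; both are fine.
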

\begin{proof}
Note that $C 
\subseteq M \subseteq P^*$, so Proposition \ref{lemma:interior} implies that 
$$P^* = \overline{C} \subseteq \overline{M} \subseteq \overline{P^*}.$$ Thus, $\overline{C} = \overline{M} = \overline{P^*}$, 
and since $C$ and $M$ are convex, 
it follows that $\operatorname{int}(C) = 
\operatorname{int}(\overline{C}) = \operatorname{int}(P^*) = \operatorname{int}(\overline{M}) = \operatorname{int}(M)$. The fact that each strictly positive functional belongs to $\operatorname{int}(P^*)$ is elementary convex geometry \cite[Exercise 2.31(d)]{BV}. Conversely, suppose 
$L \in P^*$ is not strictly positive, and let $f \in P$ be a nonzero nonnegative function such that $L(f) =0$. If $s\in S$ satisfies $f(s) > 0$, then for $\epsilon > 0$, we have $(L - \epsilon L_{s})(f) = -\epsilon f(s) < 0$, so $L$ is not in $\operatorname{int}(P^*)$. Thus, $\operatorname{int}(C)$ and $\operatorname{int}(M)$ coincide with the strictly positive functionals.

We will now prove the last point. Since $P^*$ is full-dimensional, $\operatorname{int} (P^*)$ is non-empty (\cite{barv}, p.49). Suppose that $L \in \operatorname{int} (P^*)$. Then, we see that for any $s \in S$ there exists $\epsilon >0$ such that 
$L':=L-\epsilon L_s \in \operatorname{int} (P^*) = \operatorname{int}(C)$. 
Thus $L'$ has a finitely atomic representing measure, 
whence
 $L=L'+\epsilon L_s$ has a finitely atomic representing measure whose support includes $s$. 
\end{proof}

 Given a linear functional $L \in V^*$,
 we define 
the following iterative construction: Let $S_0 \equiv S_0[L] =S$ and let $S_1\equiv S_1[L]$ be the zero set of all nonnegative functions in the kernel of $L$:
$$S_1=\mathcal{Z}(p\in P \,\,\, \mid \,\,\, L(p)=0).$$

\noindent Then we iteratively define: $$S_{i+1} 
\equiv S_{i+1}[L]
=\mathcal{Z}(p \in V \,\,\, \mid \,\,\, L(p)=0 \,\,\, \text{and $p$ is nonnegative on $S_i$}).$$
An easy induction argument shows that $S_{i+1} \subseteq S_{i}$ ($i\ge 0$). We now define the core variety of $L$ as the intersection of the nested sets $S_i$:
\begin{Def}
We call the intersection of the nested sets $S_i$ the \textit{core variety} of $L$ and denote it by $\mathcal{CV}(L)$:
$$\CV(L)=\bigcap_{i=0}^\infty S_i.$$
\end{Def}
We will show in Theorem \ref{thm:stabilize} that the iterative construction stablizes: we have $S_k=S_{k+1}=\cdots$ for a sufficiently large $k$, and, in particular, we do not need to consider infinite intersections in dealing with the core variety.

\begin{remark}\label{rem:pos}
The above results did not use the existence of a strictly positive function $\rho \in P$. It is easy to show that a strictly positive function $\rho$ exists if and only if the functions in $P$ have no common zeroes, i.e. $\mathcal{Z}(P)=\emptyset$. By construction, the core variety of any linear functional includes the set $\mathcal{Z}(P)$. Therefore, if the functions in $P$ do have common zeroes, then the core variety is never empty. Moreover, if $P$ is not full-dimensional in $V$, then for any $L\in V^*$ vanishing identically on $P$ we have $\CV(L)=\mathcal{Z}(P)$. Therefore, we are not able to determine the existence or non-existence of representing measure for such $L$ by considering only its core variety. However, with the presence of strictly positive functions we do just that in Theorem \ref{thm:main}.
\end{remark}

We start by proving in Lemma \ref{lemma:prep} (below) two basic properties of the preceding construction. 
For convenience, we will make the following standing assumption on the linear functional $L$ for all of the results up to and including Theorem \ref{thm:stabilize}.  The assumption will be removed in Theorem \ref{thm:main}, and not used in subsequent results.\\

\noindent \textbf{Standing Assumption:} \textit{There exists a strictly positive function $\rho \equiv \rho_L \in P$ such that $L(\rho)>0$.}\\
\noindent 

We remark that this assumption is standard in the Truncated Moment Problem literature. 
In the classical case when $V = \mathcal{P}_{d}$, all polynomials of degree at most $d$,
we may take $\rho$ to be the constant function $\mathbf{1}$, and the assumption $L(\mathbf{1})>0$  means that any representing measure $\mu$ satisfies $0 < \mu(S) < + \infty$; in general, since $0 < L(\rho) = \int_{S} \rho  \,d\mu$, then $\mu(S) > 0$. We note that if 
there exists a strictly positive function $f$ such that $L(f)=0$, then $S_1=\emptyset$.

\begin{lemma}\hspace{1mm}\\ \vspace{-5mm}\label{lemma:prep}
\begin{enumerate}
\item If $S_1=S$, then $L$ has a finitely atomic representing measure. The union of all supports of finitely atomic representing measures is equal to $S$.
\item If the functions in $V$ are continuous on $S$, 
then $\operatorname{supp} \mu \subseteq S_1$
for any measure $\mu$ representing $L$.

\end{enumerate}
\end{lemma}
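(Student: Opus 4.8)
\textbf{Proof proposal for Lemma \ref{lemma:prep}.}

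The plan is to handle the two parts separately, since they rely on different ideas. For part (1), the hypothesis $S_1 = S$ means that every nonnegative function $p \in P$ with $L(p) = 0$ vanishes identically on $S$; equivalently, $L(p) > 0$ for every nonzero $p \in P$. In other words, $S_1 = S$ is precisely the statement that $L$ is \emph{strictly positive} on $P$. This is exactly the hypothesis of Corollary \ref{cor:int}, so I would simply invoke that result: any strictly positive functional has a finitely atomic representing measure, and the union of all supports of such measures is $S$. Thus part (1) should follow immediately from the earlier corollary, with the only real content being the verification that the condition $S_1 = S$ is equivalent to strict positivity.

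For part (2), I would argue by contradiction (or contrapositive). Suppose $\mu$ is a representing measure for $L$ and suppose some point $s_0 \in \operatorname{supp}\mu$ fails to lie in $S_1$. By the definition of $S_1 = \mathcal{Z}(p \in P \mid L(p) = 0)$, the point $s_0$ not lying in $S_1$ means there is some $p \in P$ with $L(p) = 0$ and $p(s_0) > 0$. The strategy is then to compute $L(p) = \int_S p\, d\mu$ and show this integral must be strictly positive, contradicting $L(p) = 0$. Concretely, since $p$ is continuous (here is where the continuity hypothesis enters) and $p(s_0) > 0$, there is an open neighborhood $U$ of $s_0$ on which $p$ stays bounded below by some $\delta > 0$; and since $s_0 \in \operatorname{supp}\mu$, we have $\mu(U) > 0$. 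Then
$$
L(p) = \int_S p \, d\mu \ge \int_U p \, d\mu \ge \delta \, \mu(U) > 0,
$$
using nonnegativity of $p$ on all of $S$ to discard the contribution from outside $U$. This contradicts $L(p) = 0$, so in fact $\operatorname{supp}\mu \subseteq S_1$.

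The main obstacle I anticipate is in part (2): pinning down exactly where continuity is needed and ensuring the neighborhood/support interaction is handled cleanly. Continuity of $p$ is what guarantees the existence of the open set $U$ on which $p \ge \delta$; without it one cannot pass from a single positive value $p(s_0) > 0$ to positivity on a set of positive measure. The support condition $\mu(U) > 0$ for every open $U \ni s_0$ is exactly the definition of support recalled in the text, so that piece is automatic. One subtlety worth flagging is the measurability and integrability of $p$ against $\mu$, but since $p \in V$ and $\mu$ represents $L$, we already have $V \subseteq L_1(\mu)$ by the standing setup, so $\int_S p\, d\mu$ is well-defined and equals $L(p)$. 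Beyond these points the argument is routine, and I would not expect part (1) to present any difficulty given Corollary \ref{cor:int}.
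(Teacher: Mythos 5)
Your part (2) is essentially the paper's own argument: contradiction via continuity of $p$, an open neighborhood $U$ of $s_0$ on which $p$ stays positive, $\mu(U)>0$ from the definition of support, and nonnegativity of $p$ on all of $S$ to get $L(p)=\int_S p\,d\mu\ge\int_U p\,d\mu>0$. Nothing to add there.

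In part (1), however, there is a genuine gap. The hypothesis $S_1=S$ says only that every $p\in P$ with $L(p)=0$ vanishes identically on $S$, i.e., that $L(p)\neq 0$ for every nonzero $p\in P$. Your ``equivalently, $L(p)>0$ for every nonzero $p\in P$'' is not an equivalence on its face: you must also exclude the possibility that $L(q)<0$ for some nonzero $q\in P$, and nothing in the condition $S_1=S$ does that by itself. This is exactly where the Standing Assumption enters, and the paper spends the bulk of its proof of part (1) on this case: if $L(q)<0$, set $a:=L(\rho)>0$ and $b:=L(q)<0$ and consider $p:=\rho-\frac{a}{b}q$. Since $-\frac{a}{b}>0$, $\rho$ is strictly positive and $q\ge 0$, the function $p$ is a strictly positive element of $P$ with $L(p)=a-\frac{a}{b}b=0$, which forces $S_1\subseteq\mathcal{Z}(p)=\emptyset$ and contradicts $S_1=S$. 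Once that case is closed, $L$ is strictly positive and Corollary \ref{cor:int} applies exactly as you say. So the missing ingredient is the explicit use of the strictly positive function $\rho$ with $L(\rho)>0$; without it the claimed equivalence between $S_1=S$ and strict positivity of $L$ is false.
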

\begin{proof}
\begin{enumerate}
\item 
We claim that $L$ is strictly positive. Suppose that there exists a non-zero $q \in P$ with $L(q) \leq 0$. If $L(q) = 0$ and $q(s) > 0$ for some $s \in S$, then $s\not \in S_{1} =S$, a contradiction. If $L(q)<0$, then let $a:= L(\rho) > 0$, and let $b:=L(q)<0$. Then $p := \rho - \frac{a}{b}q$ is a strictly positive element of $P$ satisfying $L(p) = 0$, which implies $S_{1} = \emptyset$, a contradiction. Thus $L$ is strictly positive, so the result follows from Corollary \ref{cor:int}.\\

\item Suppose that there exists $s \in \operatorname{supp} \mu$ such that $s \notin S_1$. Then $S_1 \neq S$, and therefore there exists $p\in P$ such that $L(p)=0$ and $p(s)>0$. 
Since $p$ is continuous,
there exists an open set $U \subset S$ containing $s$, such that $p(u)>0$ for all $u \in U$. Since $s \in \operatorname{supp} \mu$, we have $\mu(U) > 0$. It follows that $L(p)=\int_S p \, d\mu \geq \int_U p \, d\mu
 >0$, which is a contradiction. 
\end{enumerate}
\end{proof}
In the following example we show that if the functions in $V$ are merely Borel measurable,
then the conclusion of Lemma \ref{lemma:prep} part (2) need not hold.

\begin{example} \label{exm:support} Let $S = [-1,1]$ with the usual Euclidean topology, and let
$\mu$ denote the restriction of Lebesgue measure to $S$, so $\operatorname{supp}\mu = S$.
Define a function $f$  on $S$ by $f(x) = 0$ ($x\in S$, $x\not = 0$), and $f(0) = 1$.
Let $g = 1 -f$. Now $f$ and $g$ are Borel measurable, and we let $V$ denote the vector space
spanned by $f$ and $g$; note that $V$ contains $\rho := f+g = 1$ (strictly positive).
Define a functional $L$ on $V$ by $L(f) = 0$ and $L(g) = 2$; we have $L(\rho) = 2$ ($>0$).
Since the only nonnegative functions in $\operatorname{ker} L$
are scalar multiples of $f$, it follows that
 $S_{1}  = S \setminus \{0\}$ and $S_{1} = S_{2} = \cdots$ . 
It is  easy to directly check that $0$ is not
a support point of any finitely atomic representing measure for $L$. It is also easy
to verify that any two distinct nonzero points of $S$ give the support
of a $2$-atomic representing measure for $L$, so the union of these
supports is $S_{1}$. However, $\mu$ is a representing measure for $L$,
and $\operatorname{supp} \mu$ is not contained in $S_{1}$.
\end{example}
We now make the following crucial observation:

\begin
{lemma}\label{lemma:consistence}
If there exists $g \in V$ such that $L(g)\neq0$ and $g$ vanishes identically on $S_1$, then there exists a function $f \in V$, strictly positive on $S_1$, such that $L(f)=0$, and therefore $S_2=\emptyset$. 
\end{lemma}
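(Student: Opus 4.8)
The plan is to produce the desired function $f$ as an explicit linear combination of the strictly positive function $\rho$ supplied by the Standing Assumption and the given function $g$, and then to read off $S_2 = \emptyset$ directly from the defining property of $S_2$.

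First I would set $c := L(\rho)/L(g)$, which is well-defined since $L(g) \neq 0$, and define
$$f := \rho - c\,g = \rho - \frac{L(\rho)}{L(g)}\,g \in V.$$
By linearity, $L(f) = L(\rho) - c\,L(g) = L(\rho) - L(\rho) = 0$, so $f$ lies in the kernel of $L$. Since $g$ vanishes identically on $S_1$, on $S_1$ we have $f = \rho$; and because $\rho$ is strictly positive on all of $S$, it is strictly positive on $S_1$. Hence $f$ is a function in $V$, strictly positive on $S_1$, with $L(f) = 0$, as required.

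It remains to deduce $S_2 = \emptyset$. Recall that $S_2 = \mathcal{Z}(p \in V \mid L(p) = 0 \text{ and } p \text{ nonnegative on } S_1)$ is the common zero set of all functions in $V$ that vanish under $L$ and are nonnegative on $S_1$. The function $f$ just constructed is one such function (it is in fact strictly positive on $S_1$), so $S_2 \subseteq \mathcal{Z}(f)$. But $f > 0$ on $S_1$ means $\mathcal{Z}(f) \cap S_1 = \emptyset$, while the nesting $S_2 \subseteq S_1$ (the easy induction noted earlier) gives $S_2 = S_2 \cap S_1 \subseteq \mathcal{Z}(f) \cap S_1 = \emptyset$. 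Therefore $S_2 = \emptyset$.

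The only genuine step is the choice of the combination $\rho - c\,g$: it is designed precisely so that the $g$-term cancels the value of $L$ while leaving the values on $S_1$ unchanged, since $g$ vanishes there, so that strict positivity of $\rho$ transfers to $f$ on $S_1$. I do not anticipate any real obstacle beyond spotting this combination; the Standing Assumption is exactly what guarantees the existence of a strictly positive $\rho$ with $L(\rho) > 0$ needed to make the argument work.
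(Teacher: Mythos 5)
Your proof is correct and is essentially identical to the paper's: the paper also takes $f:=\rho-\frac{L(\rho)}{L(g)}g$, notes $L(f)=0$ and that $f=\rho>0$ on $S_1$ because $g$ vanishes there, and concludes $S_2=\emptyset$. The only cosmetic difference is that the paper first normalizes the sign of $g$ so that $L(g)<0$ (a step that is not actually needed here), while you skip it and spell out the final deduction $S_2\subseteq\mathcal{Z}(f)\cap S_1=\emptyset$ in slightly more detail.
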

\begin{proof}

We have $a := L(\rho)>0$. We may assume $b:= L(g) < 0$, for otherwise we can consider $-g$. Then $f:= 
\rho - \frac{a}{b}g$  is strictly positive on $S_1$, and $L(f) = 0$, so $S_{2} = \emptyset$.
 
\end{proof}

We are now in position to show that the iterative construction will stabilize: $S_k=S_{k+1}$ for some $k$.

\begin{theorem}\label{thm:stabilize} For some $k \geq 0$ we have $S_k=S_{k+1}$, 
and $L$ has a representing measure if and only if $S_k$ is non-empty.
Moreover, if $S_k$ is non-empty, then $S_k$ is the union of all supports of finitely atomic measures representing $L$. If $S$ is a Hausdorff space, and the functions in $V$ are continuous on $S$, then $\CV(L)$ is also equal to the union of supports of all Radon measures representing $L$.
\end{theorem}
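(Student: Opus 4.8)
The plan is to handle the three assertions in sequence—stabilization of the $S_i$, the existence dichotomy with the finitely atomic support description, and the Radon refinement—keeping the Standing Assumption that $\rho$ is strictly positive with $L(\rho)>0$. Throughout, write $N_i=\{p\in V\mid L(p)=0,\ p\geq 0\text{ on }S_i\}$, so that $S_{i+1}=\mathcal{Z}(N_i)$ and $N_0=\{p\in P\mid L(p)=0\}$. For stabilization I would first note that since $S_{i+1}\subseteq S_i$ the cones satisfy $N_i\subseteq N_{i+1}$, whence $\operatorname{span}N_i\subseteq\operatorname{span}N_{i+1}$ is an increasing chain of subspaces of the finite-dimensional space $V$ and must stabilize. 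Because the common zeros of a cone equal the common zeros of its linear span, $S_{i+1}=\mathcal{Z}(N_i)=\mathcal{Z}(\operatorname{span}N_i)$; so once two consecutive spans agree the corresponding sets $S_i$ agree, and since $S_{i+1}$ depends only on $S_i$ the equality then persists. This yields $S_k=S_{k+1}=\cdots=\CV(L)$ for some $k$.

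Next I would establish the engine for the ``measure $\Rightarrow$ $\CV(L)\neq\emptyset$'' direction: \emph{every} representing measure $\mu$ satisfies $\mu(S\setminus S_i)=0$ for all $i$. This I prove by induction, the base case $i=0$ being trivial. Selecting a basis $p_1,\dots,p_m$ of $\operatorname{span}N_i$ with each $p_j\in N_i$ (a spanning cone contains a basis), I use $\mu(S\setminus S_i)=0$ and $L(p_j)=0$ to get $\int_{S_i}p_j\,d\mu=0$ with $p_j\geq 0$ on $S_i$, so $p_j=0$ $\mu$-a.e.\ on $S_i$; since $S_i\setminus S_{i+1}=\bigcup_j\{s\in S_i\mid p_j(s)\neq 0\}$ is a finite union of $\mu$-null sets, the induction closes. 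The replacement of the (possibly uncountable) defining family by a finite basis is precisely what tames the measurability issue and removes any need for continuity here. If $\CV(L)=S_k=\emptyset$ this forces $\mu(S)=0$, contradicting $0<L(\rho)=\int_S\rho\,d\mu$; hence a representing measure forces $\CV(L)\neq\emptyset$.

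For the converse, set $\mathcal{C}:=\CV(L)\neq\emptyset$ and exploit the stable identity $\mathcal{C}=\mathcal{Z}(N)$ with $N=\{p\in V\mid L(p)=0,\ p\geq 0\text{ on }\mathcal{C}\}$. First I would show $L$ factors through restriction to $\mathcal{C}$: if $g$ vanishes on $\mathcal{C}$ and $L(g)\neq 0$, then, imitating Lemma \ref{lemma:consistence}, $f:=\rho-\frac{L(\rho)}{L(g)}g$ is strictly positive on $\mathcal{C}$ with $L(f)=0$, so $f\in N$ and therefore vanishes on $\mathcal{Z}(N)=\mathcal{C}$, contradicting $\mathcal{C}\neq\emptyset$; thus every $g$ vanishing on $\mathcal{C}$ lies in $\ker L$. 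Consequently $L$ descends to a functional $\bar L$ on the restriction space $V|_{\mathcal{C}}$, and I rerun the construction on $S':=\mathcal{C}$ (still $T_1$) with $\rho|_{\mathcal{C}}$ strictly positive and $\bar L(\rho|_{\mathcal{C}})=L(\rho)>0$. The defining family for $S_1[\bar L]$ is exactly $N$ restricted to $\mathcal{C}$, so $S_1[\bar L]=\mathcal{C}\cap\mathcal{Z}(N)=\mathcal{C}=S'$, and Lemma \ref{lemma:prep}(1) produces a finitely atomic measure for $\bar L$ with the union of such supports equal to $S'=\mathcal{C}$. Pulling the atoms back to $S$ gives finitely atomic representing measures for $L$ supported in $\mathcal{C}$ whose supports cover $\mathcal{C}$; the reverse inclusion is the atomic case of the previous paragraph, since $\mu(S\setminus S_i)=0$ kills any atom outside any $S_i$. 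Hence $\CV(L)$ is the union of supports of all finitely atomic representing measures.

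Finally, for the Radon statement under Hausdorffness and continuity, the inclusion $\CV(L)\subseteq\bigcup_\mu\operatorname{supp}\mu$ is immediate because the finitely atomic (hence Radon) measures already cover $\mathcal{C}$. For the reverse inclusion I would upgrade Lemma \ref{lemma:prep}(2) to all iterates: if $s\in\operatorname{supp}\mu\setminus\mathcal{C}$, take the least $j$ with $s\in S_{j-1}\setminus S_j$, choose $p\in N_{j-1}$ with $p(s)>0$, use continuity to find an open $U\ni s$ with $p>0$ on $U$, and combine $\mu(U)>0$ with $\mu(S\setminus S_{j-1})=0$ to obtain $\int_{S_{j-1}}p\,d\mu>0=L(p)$, a contradiction; thus $\operatorname{supp}\mu\subseteq\mathcal{C}$. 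I expect the main obstacle to be the crux of the third paragraph—recognizing that the stable set $\mathcal{C}$ is a genuine fixed point forcing $L$ to factor through restriction to $\mathcal{C}$, which is exactly what converts the static identity $\mathcal{C}=\mathcal{Z}(N)$ into the hypothesis $S_1=S'$ needed to invoke Lemma \ref{lemma:prep}(1) and manufacture the measure; the secondary subtlety is the measurability point in the second paragraph, resolved by the finite-basis reduction.
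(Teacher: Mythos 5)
Your proof is correct, and its skeleton matches the paper's: stabilization from finite dimensionality, the inductive claim that $\mu(S\setminus S_i)=0$ for every representing measure (with the same finite-basis device to keep the sets Borel and to reduce to finitely many nonnegative kernel functions), and the passage to the induced functional on $\CV(L)$, which is strictly positive and is then handled by Corollary \ref{cor:int} via Lemma \ref{lemma:prep}(1). Two steps take a mildly different route. For stabilization you follow the increasing chain $\operatorname{span}N_0\subseteq\operatorname{span}N_1\subseteq\cdots$ inside $V$ and use $\mathcal{Z}(N_i)=\mathcal{Z}(\operatorname{span}N_i)$, while the paper follows the strictly decreasing dimensions of the restriction spaces $V_i=\{f|_{S_i}:f\in V\}$; both are elementary and give comparable bounds on the number of steps. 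More substantively, in the last part you prove $\operatorname{supp}\mu\subseteq\CV(L)$ by integrating over $S_{j-1}$ and using $\mu(S\setminus S_{j-1})=0$ (already established for \emph{every} representing measure), whereas the paper integrates over $\operatorname{supp}\mu$ and therefore needs $\mu(S\setminus\operatorname{supp}\mu)=0$, i.e., the Radon hypothesis. Your variant thus yields the slightly stronger conclusion, anticipated in the paper's remark following the main theorem, that continuity of the functions in $V$ alone forces $\operatorname{supp}\mu\subseteq\CV(L)$ for every representing measure. The only cosmetic omissions are the degenerate case $\operatorname{span}N_i=\{0\}$ (where $S_{i+1}=S_i$ and there is nothing to prove) and an explicit remark that each $S_i$ is Borel, which your finite-basis description of $S_i\setminus S_{i+1}$ already supplies.
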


\begin{rem}\label{rem:2.8}
If $L$ vanishes identically on $P$, then $\mathcal{CV}(L)=\emptyset$ in one step, since $P$ contains a strictly positive function. We also observe that according to the construction, we have $\CV(L)=\CV(-L)$. If $L$ does not vanish identically on $P$, then either $L$ or $-L$ satisfies the Standing Assumption. The Standing Assumption is simply used to pick the correct sign. This follows, since $L(p)\neq 0$ for some $p$ in the relative interior of $P$, and all functions in the relative interior of $P$ are strictly positive.  Thus, by Theorem \ref{thm:stabilize}, the iterative construction of the core variety terminates in a finite number of steps for any linear functional in $V^*$. It follows from the proof that $\CV(L)=S_k$ for $k \leq \dim V-1$.
\end{rem}

In the proof of Theorem \ref{thm:stabilize} and elsewhere in the sequel, 
we sometimes pass from $L$ to an induced functional $\widetilde{L}$, as we next describe. 
Let $R\subset S$ and consider the subspace $W = \{ f\in V \mid f|_R \equiv 0\}$.
Let $\widetilde{V} = V/W \equiv \{f|_R \mid f\in V\}$. 
Assuming that $W \subseteq \operatorname{ker} L$, $L$ induces a well-defined element $\widetilde{L} \in
\widetilde{V}^*$ by $\widetilde{L}(f + W) \equiv \widetilde{L}(f|_R):= L(f)$ ($f\in V$). Note that if $\widetilde{L}$ admits a finitely atomic representing measure, say $\widetilde{L} = \sum a_i L_{r_{i}} (a_i > 0, r_i \in R)$, then since $L(f) = \widetilde{L}(f|_R) = \sum a_i f(r_i)$ ($f \in V$), it follows that $L$ admits a finitely atomic representing measure supported in $R$.

\begin{proof}[Proof of Theorem \ref{thm:stabilize}]

We first show that the iterative construction stabilizes.
For $i\ge 0$, let $V_{i} = \{f|_{S_{i}}\mid f\in V\} \equiv V/\{f\in V \mid f|_{S_{i}} \equiv 0\}$.
Suppose $S_{i+1}$ is nonempty and let $\tau:V_{i} \rightarrow V_{i+1}$ be the mapping given by
$\tau(f|_{S_{i}}) = f|_{S_{i+1}}$. Since $S_{i+1} \subseteq S_{i}$, $\tau$ is a well-defined linear surjection, so $\dim V_{i+1} = \dim V_{i} - \dim \ker \tau$. 
Note that if there is some $x\in S_{i} \setminus S_{i+1}$, then there exists
$f\in \ker L$ such that $f|_{S_{i}}\ge 0$ and $f(x)>0$. Thus $f|_{S_{i}}$ is a nonzero element of 
$\ker \tau$, so $\dim V_{i+1} < \dim V_{i}$. It follows that for $i\ge 0$,
either a) $S_{i} = \emptyset$, or b) $S_{i} \not = \emptyset$ and $S_{i+1} = \emptyset$,
or c) $S_{i+1} = S_{i} \not = \emptyset$, or d) $S_{i+1}$ is a proper nonempty subset of $S_{i}$,
in which case $\dim V_{i+1} < \dim V_{i}$. Since $V_{0} \equiv V$ is finite dimensional,
case d) can occur at most a finite number of times, so there is a smallest $k$ for which
$S_{k}$ satisfies a), b), or c). In cases a) and c), we have $S_{k} = S_{k+1} = \cdots$,
and in case b) we have $S_{k+1} = S_{k+2} = \cdots$, so we have established stabilization.

We now show that if $L$ has a representing measure $\mu$, then the core variety is non-empty. Indeed, we will show by induction on $i\ge 0$ that 
$S_{i}$ is a Borel set and that
$\mu(S_i)=\mu(S_0)=\mu(S)>0$. The base case $S=S_0$ is clear from the Standing Assumption, which forces $L$ to be non-zero. 
Now suppose that $S_{i}$ is a Borel set and that
 $\mu(S_i)=\mu(S)>0$. 
Let $T$ be the span of 
the functions in the kernel of $L$ 
that are nonnegative on $S_{i}$. If $T = \{0\}$, then $S_{i+1} = S_{i}$, 
so the result follows by our inductive assumption.
Suppose then that $T$ is nontrivial; 
since $T$ is finite dimensional, we can choose a basis $f_1,\dots ,f_m$ of $T$ with each $f_i$ nonnegative on $S_i$. Then $S_{i+1}$ is the common zero set of $f_1,\dots,f_m$. Let $F=f_1+\dots+f_m$, so that $\mathcal{Z}(F)=\mathcal{Z}(f_1,\dots,f_m)=S_{i+1}$.
Since $F$ is Borel measurable, its zero set is Borel, so
it follows that $S_{i+1}$ is a Borel set. Now consider the complement $C$ of $S_{i+1}$ in $S_i$
(a Borel set).
We must have $\mu(C) = 0$. For otherwise,
since $\mu(S) = \mu(S_{i})$, $F |_{S_{i+1}} \equiv 0$, and $F|_C > 0$, then
$L(F)=\int_{S} F\, d\mu=\int_{S_i} F\, d\mu=\int_{C} F\, d\mu>0$, which is a contradiction to $L(F)=0$. 
It follows that $\mu(S_{i+1}) = \mu(S_{i}) = \mu(S)$, as desired.

Now suppose that the core variety $\CV(L) \equiv S_{k}$ ($= S_{k+1}= \cdots$) is nonempty. 
Exactly as in the proof of Lemma \ref{lemma:consistence} (replacing $S_{1}$ by $S_{k}$ and $S_{2}$ by
$S_{k+1}$), we see that if $f\in V$ satisfies $f|_{S_{k}} \equiv 0$, then $L(f) = 0$.
Then the mapping $\widetilde{L}:V_{k} \rightarrow \mathbb{R}$ defined by 
$\widetilde{L}(f|S_{k}) := L(f)$ is a well-defined linear functional on $V_{k}$. Further,
it follows exactly as in the proof of Lemma \ref{lemma:prep}-i)  (replacing $S$ by $S_{k}$ and
$S_{1}$ by $S_{k+1}$), that $\widetilde{L}$ is strictly positive as an element of $V_{k}^{*}$. 
Corollary 2.2 thus implies that $\widetilde{L}$ has a finitely atomic 
representing measure,
and that the union of all supports of such measures is precisely $S_{k}$.
As noted in the remarks just preceding this proof,
each such measure for $\widetilde{L}$ corresponds to a finitely atomic representing measure
for $L$. We claim, conversely, that the support of each finitely atomic representing measure
$\mu$ for $L$ is contained in $S_{k}$. Indeed, since $\mu$ is a representing measure,
we know (from just above) that $\mu(S) = \mu(S_{k})$, so $\mu$ cannot have any atoms outside of
$S_{k}$. Thus $S_{k}$ is the union of all finitely atomic representing measures for $L$.

Finally, we consider the case when $S$ is Hausdorff, the measures are Radon and the functions in $V$ are continuous on $S$. For properties of Radon measures we refer to \cite[Appendix A.1]{Sch2}, \cite[p. 155]{Bau}. To complete the
proof, it suffices to show that if $\mu$ is a representing Radon measure for $L$, then $\operatorname{supp}\mu
\subseteq S_{i}$ for each $i\ge 0$, and this is clear for $i=0$. Suppose $\operatorname{supp} \mu \subseteq S_{i}$ and there exists $x\in \operatorname{supp} \mu \setminus S_{i+1}$. Since $x\in S_{i}\setminus S_{i+1}$,
there exists $f\in V$ such that $f|_{S_{i}}\ge 0$, $f(x) > 0$, and $L(f) = 0$. 
From the continuity of $f$, there is an open set $U$ such that $x\in U$ and $f|_U > 0$.
Since $x \in \operatorname{supp}\mu$, then $\mu(U) > 0$, and so $\int_{U}f d\mu > 0$. Also,
since $f|_{S_{i}} \ge 0$, then $f|_{\operatorname{supp}\mu} \ge 0$, so $\int_{\operatorname{supp}\mu} fd\mu \ge 
\int_{U \bigcap \operatorname{supp}\mu} f d\mu$.  
Now $0 = L(f) = \int_{S}f d\mu = \int_{\operatorname{supp}\mu} fd\mu \ge \int_{ U\bigcap \operatorname{supp}\mu} f d\mu 
= \int_{U} f d\mu > 0$, and this contradiction completes the proof. 

\end{proof}

The following is the main result of this section. It is a slight generalization of Theorem \ref{thm:stabilize} using the terminology of the core variety:
\begin{theorem}\label{thm:main}
For any nonzero $L \in V^*$ exactly one of the following holds:
\begin{enumerate}
\item The core variety of $L$ is empty and $L$ does not have a representing measure.\\
\item The core variety of $L$ is non-empty, in which case, replacing $L$ by $-L$ if necessary, we
may assume that $L(\rho) > 0$ for a strictly positive $\rho \in P$.  Then $L$ has a representing measure, and the core variety of $L$  is equal to the union of all supports of finitely atomic measures representing $L$. If $S$ is a Hausdorff space, and the functions in $V$ are continuous on $S$, then $\CV(L)$ is also equal to the union of supports of all Radon measures representing $L$.

\end{enumerate}
\end{theorem}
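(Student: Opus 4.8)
The plan is to deduce Theorem \ref{thm:main} from Theorem \ref{thm:stabilize} by removing the Standing Assumption; the only work is to fix the sign of $L$ and to dispose of the degenerate case in which $L$ vanishes on the cone $P$. Everything substantive—stabilization, the existence criterion, and the identification of $\CV(L)$ with the union of supports of finitely atomic (and, in the Hausdorff/continuous setting, Radon) representing measures—is already contained in Theorem \ref{thm:stabilize} and will be invoked verbatim once the Standing Assumption has been arranged. Since the cases (1) and (2) are separated precisely by whether $\CV(L)$ is empty, which is a sign-invariant and genuinely exclusive condition, showing that the stated conclusions hold in each case will automatically give that \emph{exactly} one holds.

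First I would dispose of the case in which $L$ vanishes identically on $P$. Here Remark \ref{rem:2.8} gives $\CV(L)=\emptyset$ at once: since $P$ contains a strictly positive $\rho$ with $L(\rho)=0$, we have $S_1=\mathcal{Z}(p\in P \mid L(p)=0)\subseteq \mathcal{Z}(\rho)=\emptyset$. To complete case (1) I must check that $L$ has no representing measure; if $\mu$ represented $L$, then $0=L(\rho)=\int_S \rho\,d\mu$ with $\rho>0$ everywhere and $\mu\ge 0$ would force $\mu=0$, hence $L=0$, contradicting $L\neq 0$.

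Next, assume $L$ does not vanish on $P$. By Remark \ref{rem:2.8} there is a strictly positive $\rho\in P$ (any point in the relative interior of $P$) with $L(\rho)\neq 0$. Using $\CV(L)=\CV(-L)$, I replace $L$ by $-L$ if necessary so that $L(\rho)>0$; then $L$ satisfies the Standing Assumption and Theorem \ref{thm:stabilize} applies directly. Its conclusions are exactly the two alternatives of Theorem \ref{thm:main}: if $\CV(L)=\emptyset$ then $L$ has no representing measure, and if $\CV(L)\neq\emptyset$ then $L$ has a representing measure with $\CV(L)$ equal to the union of supports of finitely atomic representing measures, and, when $S$ is Hausdorff and the functions in $V$ are continuous, of all Radon representing measures.

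The one point requiring care—and the only genuine obstacle—is the sign bookkeeping needed to make case (1) a statement about the \emph{original} $L$ rather than its sign-normalized version, since case (1) as stated involves no sign flip. When $\CV(L)=\emptyset$ I must verify that neither sign of $L$ admits a representing measure. The sign satisfying the Standing Assumption has none by Theorem \ref{thm:stabilize}. For the opposite sign, say $L'$, we have $L'(\rho)<0$ for the strictly positive $\rho$, so a hypothetical representing measure $\mu$ would give $L'(\rho)=\int_S \rho\,d\mu\ge 0$, a contradiction; thus $L'$ also has no representing measure. Combined with the first two cases, this shows that whenever $\CV(L)=\emptyset$ the original $L$ (of either sign) has no representing measure, and whenever $\CV(L)\neq\emptyset$ the normalized $L$ is as in (2), establishing that exactly one of (1), (2) holds and completing the proof.
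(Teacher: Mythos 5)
Your proposal is correct and follows essentially the same route as the paper: dispose of the case where $L$ vanishes identically on $P$ (where $S_1=\emptyset$ because $P$ contains a strictly positive function), then use Remark \ref{rem:2.8} to choose the sign so that the Standing Assumption holds and invoke Theorem \ref{thm:stabilize}. You are in fact somewhat more careful than the paper's terse proof, explicitly verifying that in the degenerate case $L$ has no representing measure and that the sign-flipped functional (with $L(\rho)<0$) cannot be represented either; these checks are correct and fill in details the paper leaves implicit.
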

\begin{proof}
If $L$ vanishes identically on $P$, then $S_1=\emptyset$, since by our assumption $P$ contains a strictly positive function. If $L$ does not vanish on $P$, then there exists a strictly positive function $\rho \in P$, such that $L(\rho)\neq 0$ (cf. Remark \ref{rem:2.8}). Therefore, we can apply Theorem \ref{thm:stabilize} to either $L$ or $-L$ and the result follows.
\end{proof}

We now consider the number of steps before stabilization occurs in the iterative construction of the core variety.

\begin{example}

Consider the smallest value of $k$ for which $\mathcal{CV}(L) =S_{k}$.  
 In the examples of \cite{F2} and in the previous examples of this paper we have
$k\le 2$, but we now illustrate that stabilization may take $\dim V-1$ steps, which is the longest possible by the proof of Theorem \ref{thm:stabilize}.
Let $\alpha_{i} = 4i$ ($0\le i\le k+1$) and let
$\beta_{i} = x_{i} +2$ ($0\le i\le k$). Let $S$ be the set obtained from $(0,+\infty)$
by deleting $\alpha_{0},\ldots, \alpha_{k-1}$.
We define functions $f_{0},\ldots,f_{k}$ on $S$ as follows:
$$f_{0}(x) = -(x-4)\,\,\,\, \text{with} \,\,\,\, 0 < x < 4,\,\,\,\, \text{and} \,\,\,\, f_{0}(x) = 0\,\,\,\, \text{with} \,\,\,\, x > 4.$$ 
For $1\le i\le k$, we set
$f_{i}(x) := 0$  for $x \in S$ with $0< x< x_{i-1}$ or $x \ge x_{i+1}$, and
$$f_{i}(x) = -\frac{(x-\beta_{i-1})(x-\alpha_{i+1})}{(x-\alpha_{i-1})}\,\,\,\, \text{for}\,\,\,\, x_{i-1}< x < x_{i+1}.$$
Let $V$ denote the vector space spanned by $f_{0},\ldots,f_{k}$ and $\rho(x) \equiv 1$.
Since these functions are linearly independent in $C(S)$, we have $\dim V=k+2$, and we may define a linear functional $L$
on $V$ by $L(\rho) = 1$ and $L(f_{i}) = 0$ ($0\le i\le k$).
We claim that the cone $P_{0}$ of functions in $\ker L$ that are nonnegative on $S$  
consists precisely of the nonnegative multiples of $f_{0}$. To see this, suppose
$f := a_{0}f_{0} + \cdots + a_{k}f_{k}$ is nonnegative on $S$. Since $0\le f(\beta_{k}) = a_{k}f_{k}(\beta_{k})$
and $f_{k}(\beta_{k}) > 0$, we have $a_{k}\ge 0$. Now $$0\le \lim_{x \rightarrow \alpha_{k-1}^+} f(x) =
a_{k-1} \lim_{x \rightarrow \alpha_{k-1}^+} f_{k-1} + a_{k} \lim_{x \rightarrow \alpha_{k-1}^+} f_{k}(x).$$
If $a_{k}> 0$, then $\lim_{x \rightarrow \alpha_{k-1}^+} f(x) = -\infty$, and this contradiction implies that  $a_{k} = 0$.
Repeating this argument, considering the behavior of each $f_{j}$ at $\beta_{j}$ and as
$x\longrightarrow \alpha_{j-1}^+$, we may successively show that $a_{k-1} = \ldots = a_{1} = 0$, and then $a_{0} \ge 0$.
It follows that $S_{1} = \mathcal{Z}(f_{0}) = \{x\in S: x > x_{1}\}$. 
By repeating the preceding arguments, we may next show that the functions in $\ker L$ that are nonnegative on $S_{1}$
are precisely of the form $a_{0}f_{0} + a_{1}f_{1}$ with $a_{0} \in \mathbb{R}$ and $a_{1}\ge 0$, and from this it follows that
$S_{2} = \mathcal{Z}(f_{0},f_{1}) = \{x\in S: x> x_{2}\}$. In this manner, we may show successively that for $i = 1,\ldots,k$,
$P_{i} := \{f\in \ker L: f|_{S_{i}}\ge 0\} = \{a_{0}f_{0} +\cdots +a_{i-1}f_{i-1} + a_{i}f_{i}: a_{0},\cdots,a_{i-1} \in
\mathbb{R},~ a_{i}\ge 0\}$, from which it follows that $S_{i+1} = \mathcal{Z}(P_{i}) = \{x \in S: x\ge \alpha_{i+1}\}$.
Thus $S_{0},\ldots,S_{k+1}$ is a chain of strictly decreasing sets, and $\mathcal{CV}(L) =S_{k+1} =  S_{k+2} = \cdots $. Thus we have built a vector space of dimension $k+2$, where the core variety takes $k+1=\dim V-1$ steps to stabilize, which is the longest possible by the proof of Theorem \ref{thm:stabilize}.

\end{example}

\begin{remark}\label{rem:gen}
For $L \in V^{*}$, we proved Theorems \ref{thm:stabilize} and \ref{thm:main} under the assumption that $V$ contains a strictly positive function $\rho$. The argument in the proof of Theorem \ref{thm:stabilize} shows that stabilization of the core variety construction is a general phenomenon, which holds with no assumptions concerning strictly positive functions. In fact, it follows from the argument that $\CV(L)=S_k$ for $k \leq \dim V-1$. Similarly, without positivity assumptions, the proof of Theorem \ref{thm:main} shows that for $L$ nonzero,  if $L$ has a representing measure, then $\CV(L)$ is nonempty, and that if $\mu$ is a Radon representing measure, then $\operatorname{supp} \mu \subseteq \CV(L)$. For the case of Theorem \ref{thm:main} where $\CV(L)$ is nonempty, we can sometimes get by with a slightly weaker assumption than the Standing Assumption, as follows.

Suppose that $\CV(L) \equiv S_{k}$ is nonempty, and consider the following possible property for $L$: ($\mathcal{S}(L,k)$) There exist $\rho \equiv \rho_{L} \in V$ such that $\rho_{|S_{k}}$ is nonnegative, $\rho_{|S_{k}} \not \equiv 0$, and $L(\rho) \ge 0$. 
Clearly, if $L$ satisfies the Standing Assumption, then $\mathcal{S}(L,k)$ holds.

\smallskip

\noindent {\bf{Theorem.}} If $\CV(L) \equiv S_{k}$ is nonempty and $L$ satisfies $\mathcal{S}(L,k)$, then $L$ has a finitely atomic
representing measure, and the union of the supports of such measures coincides with $\CV(L)$.
\smallskip

To prove this, we adapt the proof of Theorem \ref{thm:stabilize}: condition $\mathcal{S}(L,k)$ implies that the functional
$\widetilde{L} \in V_{k}^{*}$ is well-defined and strictly positive, so the conclusion follows exactly as in the proof of
Theorem \ref{thm:stabilize}. 

The following example illustrates this approach. Let $S = [-1,2]$; let $f(0) = 1$ and $f(x) = 0$ for $x \not = 0$;
let $g(x) = 1$  for $x \in [-1,1]$ and $g(x) = -1$ for $x \in (1,2]$;
let $h(-1) = 1$ and $h(x) = 0$ for $-1 <x\le 2$.  Let $V = \langle f,~g,~h\rangle$. 
Define $L \in V^{*}$ by $L(f) = 0$, $L(g) = 2$, $L(h) = 1$. 
Then $P = \{af + bh: a,b \ge 0\}$, so there is no strictly positive element of $P$ and Theorem \ref{thm:stabilize}
does not apply. However, 
 $\CV(L) = S_{1} = S \setminus \{0\}$ and
$L$ satisfies $\mathcal{S}(L,1)$ with $\rho_{L} = h$, so the preceding theorem does apply. 
It is easy to directly check that
$\CV(L)$ is the union of supports of $2$-atomic and $3$-atomic representing measures for $L$, as predicted
by the Theorem.
\end{remark}

As a corollary of Theorem \ref{thm:main},  we next  prove a version of the Bayer-Teichmann Theorem \cite{BT} on multivariable cubature (in \cite{DS}, the result in \cite{BT} is attributed to H. Richter \cite{R}). That result
generalized the classical cubature existence theorem of Tchakaloff for the polynomial case when
$S$ is compact.
\begin{cor}[Bayer-Teichmann Theorem]\label{cor:BT}
Suppose that $L\in V^*$ has a representing measure $\mu$. Then $L$ has a finitely atomic representing measure $\nu$ with $\operatorname{card}\operatorname{supp} \nu \le \dim ~V$. If additionally, $S$ is Hausdorff and $\mu$ is a Radon measure, then $\operatorname{supp} \nu \subseteq \operatorname{supp} \mu$.\end{cor}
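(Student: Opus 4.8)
The plan is to deduce the Bayer--Teichmann Theorem (Corollary \ref{cor:BT}) directly from Theorem \ref{thm:main}, treating separately the degenerate case where $L$ is the zero functional. First I would dispose of the trivial case: if $L=0$, then the zero measure represents $L$ and is vacuously finitely atomic with empty support, so the statement holds. Henceforth assume $L\neq 0$. Since $L$ has a representing measure $\mu$, the functional $L$ lies in $M$; I claim this forces $\mathcal{CV}(L)\neq\emptyset$. Indeed, by Theorem \ref{thm:main}, exactly one of its two alternatives holds for the nonzero $L$, and alternative (1) asserts that an empty core variety implies $L$ has \emph{no} representing measure. Since $L$ does have one, we must be in alternative (2), so $\mathcal{CV}(L)\neq\emptyset$.

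Now I would invoke the conclusion of alternative (2) of Theorem \ref{thm:main}: it guarantees (after possibly replacing $L$ by $-L$, which does not affect the existence of representing measures) that $L$ has a \emph{finitely atomic} representing measure $\nu_0$, and that $\mathcal{CV}(L)$ equals the union of the supports of all such finitely atomic measures. This already yields the existence of a finitely atomic representing measure; the remaining work is the two quantitative refinements. For the cardinality bound, I would recall the remark immediately preceding Proposition \ref{lemma:interior}: by Carath\'eodory's Theorem, any element of the cone $C=\operatorname{ConicalHull}\{L_s\mid s\in S\}$ can be written as a conical combination of at most $\dim V$ point evaluations. Since a finitely atomic representing measure corresponds precisely to an expression $L=\sum_i a_i L_{s_i}$ with $a_i>0$, Carath\'eodory lets me thin out any such representation to one using at most $\dim V$ distinct atoms; the resulting measure $\nu$ then satisfies $\operatorname{card}\operatorname{supp}\nu\le\dim V$.

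For the support-containment refinement under the Hausdorff/Radon hypothesis, I would again appeal to alternative (2) of Theorem \ref{thm:main}, which states that when $S$ is Hausdorff and the functions in $V$ are continuous, $\mathcal{CV}(L)$ equals the union of the supports of all Radon representing measures. Hence if $\mu$ is the given Radon representing measure, then $\operatorname{supp}\mu\subseteq\mathcal{CV}(L)$; but $\mathcal{CV}(L)$ is also the union of supports of \emph{finitely atomic} representing measures, so the atoms of the $\nu$ produced above all lie in $\mathcal{CV}(L)$. The subtlety is that I need $\operatorname{supp}\nu\subseteq\operatorname{supp}\mu$, not merely $\operatorname{supp}\nu\subseteq\mathcal{CV}(L)$, and these need not coincide for a \emph{single} $\mu$. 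The clean way around this is to restrict the construction to the closed set $K:=\operatorname{supp}\mu$: replacing $S$ by $K$ (on which $\mu$ is still a representing Radon measure and the continuous functions still have a strictly positive element via $\rho|_K$, using $\mu(K)>0$), the core variety computed relative to $K$ produces finitely atomic measures supported in $K=\operatorname{supp}\mu$, and the same Carath\'eodory thinning applies.

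I expect the main obstacle to be this last point: ensuring that one can force the atoms of $\nu$ into $\operatorname{supp}\mu$ rather than merely into the ambient core variety, and verifying that the hypotheses (a strictly positive function, points closed, the Standing Assumption) persist after passing to the closed subspace $S=\operatorname{supp}\mu$ and the induced functional $\widetilde L$ on the restricted function space. Everything else is a direct citation of Theorem \ref{thm:main} together with the elementary Carath\'eodory reduction, so the real content lies entirely in correctly setting up the restriction to $\operatorname{supp}\mu$ so that the machinery of the earlier theorems becomes applicable.
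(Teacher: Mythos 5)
Your proposal is correct and follows essentially the same route as the paper: deduce existence and the Carath\'eodory bound from Theorem \ref{thm:main}, then obtain $\operatorname{supp}\nu\subseteq\operatorname{supp}\mu$ by restricting to $K=\operatorname{supp}\mu$ and applying Theorem \ref{thm:main} to the induced functional $\widetilde L$ on $\widetilde V=\{f|_K : f\in V\}$. The ``main obstacle'' you flag is resolved exactly as you anticipate: the Radon property gives $\mu(S\setminus\operatorname{supp}\mu)=0$, which makes $\widetilde L$ well-defined and shows $\mu|_K$ represents it.
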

\begin{proof}
The first part of the Theorem follows directly from Theorem \ref{thm:main}, and  $\operatorname{card}\operatorname{supp} \nu \le \dim ~V$ follows from Carath\'{e}odory's Theorem. Now assume that $S$ is Hausdorff and $\mu$ is a Radon measure. Then we know that $\mu(S \setminus \operatorname{supp} \mu) = 0$ \cite[Appendix A.1]{Sch2}, \cite[p. 155]{Bau}. Let $\widetilde{V} := \{f|_{\operatorname{supp} \mu}: f \in V\}$. Define $\widetilde{L}$ in $\widetilde{V}^{*}$ by $\widetilde{L}(f|_{\operatorname{supp} \mu}) := L(f)$. $\widetilde{L}$ is well-defined, since if $f|_{\operatorname{supp}\mu} \equiv 0$, then  $\mu(S \setminus \operatorname{supp} \mu) = 0$ implies that $L(f) = \int f d\mu = \int_{\operatorname{supp} \mu} f d\mu = 0$. Now, $\widetilde{L}$
     has a representing measure, namely $\mu|_{\operatorname{supp}\mu}$, so Theorem \ref{thm:main} (applied with
     $S$ replaced by $\operatorname{supp} \mu$) implies that $\widetilde{L}$ has a finitely atomic representing      measure $\nu$ with $\operatorname{supp} \nu \subseteq \operatorname{supp} \mu$, and $\operatorname{card} \operatorname{supp}\nu \le
     \dim \widetilde{V} \le \dim V.$  Since $\nu$ is also a representing measure for $L$, the proof is complete.\end{proof}

\begin{remark}
For the classical polynomial case, with $V = \mathcal{P}_{d}$, Theorem \ref{thm:main}
shows that since the union of supports of all
cubature rules for $L \equiv L_{\mu}$ coincides with
$\mathcal{CV}(L)$, then this union is an
algebraic variety. This is the motivation behind the terminology ``core variety". 
\end{remark}

We next show how the core variety can be used to describe the \textit{facial decomposition} of the cone $M$ of functionals in 
$V^{*}$ having representing measures. Recall that for a general convex set $K$, a subset $F$ is a \textit{face} if $F$ is convex and
whenever $x,y \in K$ satisfy $tx + (1-t)y \in F$ for some $t\in (0,1)$, then $x,y \in F$. It is known that $K$ is the disjoint union of 
the relative interiors of its faces \cite[Thm 18.2]{rock}; moreover, for $x\in K$, the unique face to which $x$ belongs that is minimal with respect
to set inclusion (among all faces of $K$) is the face $G$ satisfying $x \in \operatorname{relint} G$ (as described above).

\begin{theorem}\label{prop:facial}
Let $L \in M$. Let $F_L$ be the set of all linear functionals in $M$ whose core variety is contained in the core variety of $L$:
$$F_L=\{m \in M \,\,\, \mid \,\,\, \mathcal{CV}(m) \subseteq \mathcal{CV}(L)  \}.$$
Then $F_L$ is a face of $M$, and $m \in \operatorname{relint} F_L$ if and only if $\mathcal{CV}(m)=\mathcal{CV}(L)$. Furthermore, any face of $M$ has the form $F_L$ for some $L\in M$.
\end{theorem}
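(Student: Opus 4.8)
The plan is to identify, for each $L\in M$, the set $F_L$ with $F_{\min}(L)$, the unique face of $M$ having $L$ in its relative interior (its existence is part of the facial decomposition recalled before the statement). We may assume $L\neq 0$, since $L=0$ gives $F_L=\{0\}=F_{\min}(0)$. Granting $F_L=F_{\min}(L)$, all three assertions become bookkeeping. Two preliminaries will be used throughout. First, a monotonicity property: for $m_1,m_2\in M$ with $m_1+m_2\neq 0$, $\mathcal{CV}(m_1)\cup\mathcal{CV}(m_2)\subseteq\mathcal{CV}(m_1+m_2)$; I would prove it from Theorem \ref{thm:main} by taking $s\in\mathcal{CV}(m_1)$ (so $m_1\neq 0$), choosing a finitely atomic representing measure $\nu_1$ for $m_1$ with $s\in\operatorname{supp}\nu_1$ and a finitely atomic representing measure $\nu_2$ for $m_2$ (the zero measure if $m_2=0$), and noting that $\nu_1+\nu_2$ represents $m_1+m_2$, so $s\in\operatorname{supp}(\nu_1+\nu_2)\subseteq\mathcal{CV}(m_1+m_2)$. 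Second, the elementary cone-face criterion: a nonempty face $F$ of the cone $M$ is a subcone with $x+y\in F,\ x,y\in M\Rightarrow x,y\in F$, obtained by applying the face definition to $x+y=\tfrac12(2x)+\tfrac12(2y)$.

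For $F_{\min}(L)\subseteq F_L$, I would take $m\in F_{\min}(L)$ and use $L\in\operatorname{relint}F_{\min}(L)$: by the prolongation principle for relative interiors \cite[Thm.~6.4]{rock} there is $\epsilon>0$ with $z:=(1+\epsilon)L-\epsilon m\in F_{\min}(L)\subseteq M$. Then $(1+\epsilon)L=z+\epsilon m$ is nonzero, so monotonicity gives $\mathcal{CV}(m)=\mathcal{CV}(\epsilon m)\subseteq\mathcal{CV}((1+\epsilon)L)=\mathcal{CV}(L)$, i.e.\ $m\in F_L$.

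The reverse inclusion $F_L\subseteq F_{\min}(L)$ is the crux, and I would obtain it by passing to $W:=\mathcal{CV}(L)$. Let $m\in F_L$, $m\neq 0$ (the case $m=0$ is trivial). By Theorem \ref{thm:main}, $m$ has a finitely atomic representing measure $\nu$ with $\operatorname{supp}\nu\subseteq\mathcal{CV}(m)\subseteq W$; hence $m$ annihilates $W_0:=\{f\in V:f|_W\equiv 0\}$, and so does $L$ by the self-consistency of the core variety shown in the proof of Theorem \ref{thm:stabilize} ($f|_{S_k}\equiv 0\Rightarrow L(f)=0$). Thus $L$ and $m$ descend to $\widetilde V:=V/W_0$, giving $\widetilde L,\widetilde m\in\widetilde V^{*}$, and $\nu$ (viewed on $W$) represents $\widetilde m$. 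I would now rerun the theory over the ground space $W$, which is again $T_1$ and carries the strictly positive function $\rho|_W$: Corollary \ref{cor:int}, applied to this restricted data, identifies the interior of the cone $\widetilde M$ of functionals on $\widetilde V$ representable by measures supported in $W$ with the strictly positive functionals; and $\widetilde L$ is strictly positive on $\widetilde V$—precisely the fact extracted in the proof of Theorem \ref{thm:stabilize}—so $\widetilde L\in\operatorname{int}\widetilde M$. Since $\widetilde m\in\widetilde M$, for small $\epsilon>0$ we have $\widetilde L-\epsilon\widetilde m\in\widetilde M$; a representing measure for it is supported in $W$ and, regarded on $S$, represents $L-\epsilon m$, so $L-\epsilon m\in M$. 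Finally $L=(L-\epsilon m)+\epsilon m$ exhibits $L$ as a sum of two members of $M$, and the cone-face criterion applied to $F_{\min}(L)\ni L$ forces $\epsilon m\in F_{\min}(L)$, i.e.\ $m\in F_{\min}(L)$.

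With $F_L=F_{\min}(L)$ established, I would finish as follows. $F_L$ is a face and $L\in\operatorname{relint}F_L$. Writing $F_m:=\{m'\in M:\mathcal{CV}(m')\subseteq\mathcal{CV}(m)\}$: if $\mathcal{CV}(m)=\mathcal{CV}(L)$ then $F_m=F_L$, so $m\in\operatorname{relint}F_m=\operatorname{relint}F_L$; conversely, if $m\in\operatorname{relint}F_L$ then $F_{\min}(m)=F_L=F_m$, whence $m\in F_L$ and $L\in F_m$ give $\mathcal{CV}(m)\subseteq\mathcal{CV}(L)$ and $\mathcal{CV}(L)\subseteq\mathcal{CV}(m)$, i.e.\ equality. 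For the final assertion, given any nonempty face $F$, I would pick $m\in\operatorname{relint}F$; then $F$ is the minimal face containing $m$, so $F=F_{\min}(m)=F_m$, of the required form. The main obstacle is the reverse inclusion—specifically the manufacture of an interior point there: it is the strict positivity of $\widetilde L$ on $W=\mathcal{CV}(L)$ that lets me subtract a small multiple of $m$ and stay in the cone, which is exactly what activates the face property.
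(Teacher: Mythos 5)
Your proposal is correct, and it reaches the same destination by a noticeably leaner route. The two proofs share the same technical engine: (a) the induced functional $\widetilde L$ on $V|_{\CV(L)}$ is strictly positive, hence an interior point of the representable cone over $\CV(L)$, which is what lets one perturb ($L-\epsilon m$ for you, $tL+(1-t)J$ with $t>1$ for the paper) and stay representable; and (b) supports of finitely atomic representing measures add under sums of functionals, which is your monotonicity lemma $\CV(m_1)\cup\CV(m_2)\subseteq\CV(m_1+m_2)$ and is the paper's by-contradiction argument that $F_L$ is a face. The organizational difference is that you identify $F_L$ outright with the minimal face $F_{\min}(L)$ containing $L$ in its relative interior, so the face property and $L\in\operatorname{relint}F_L$ come for free; the paper instead first proves the auxiliary characterization $F_L=H^{\perp}\cap M$ (functionals in $M$ annihilating all $f$ with $f|_{\CV(L)}\equiv 0$), whose harder inclusion requires a backward induction through the iterates $S_k\subset S_{k-1}\subset\cdots\subset S_0$, and only then establishes the face and relative-interior statements separately. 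Your route skips that induction entirely and packages the additivity of supports as a clean, reusable lemma; the paper's route yields as a by-product the structural fact that $F_L$ is the trace on $M$ of a linear subspace of $V^*$, which is of independent interest. The only points worth making explicit in a final write-up are the ones you implicitly rely on: that $\widetilde L-\epsilon\widetilde m$ admits a \emph{finitely atomic} representing measure supported in $\CV(L)$ (take $\epsilon$ small enough that it stays in the interior, or invoke Corollary \ref{cor:BT} over the ground space $\CV(L)$) so that it pulls back to a measure on $S$, and that faces of the pointed cone $M$ are themselves cones, so $\epsilon m\in F_{\min}(L)$ indeed gives $m\in F_{\min}(L)$.
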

\begin{proof}
Let $H$ be the subspace of $V$ consisting all functions vanishing on $\CV(L)$. Let $H^\perp$ be the subspace of $V^*$ consisting of functionals that are identically 0 on $H$. Let $G_L$ be the intersection of $H^\perp$ with $M$. We claim that $F_L=G_L$. Note that this implies that $F_L$ is a convex cone. Inclusion $F_L\subseteq G_L$ is immediate:  any functional in $F_L$ has a finitely atomic representing measure supported in $\CV(L)$ and therefore will be identically $0$ on $H$. The inclusion $G_L \subseteq F_L$ follows from the iterative construction of the core variety.  Indeed, let $m$ be a linear functional in $G_L$. Then $m$ has a finitely atomic representing measure $\mu$ by Corollary \ref{cor:BT}.  We claim that if the support of $\mu$ is not contained in $\CV(L)=S_k$, then it cannot be contained in $S_{k-1}$ either. Arguing as in the proof of Theorem \ref{thm:stabilize}, there exists a function $f\in V$ identically $0$ on $S_k$ and strictly positive on $S_{k-1} \setminus S_k$.  Since $f \in H $, we must have $m(f)=0$, but if the support of $\mu$ contains a point in $S_{k-1}\setminus S_k$, then $m(f)$ is positive, which is a contradiction. Repeating the same argument shows that the support of $\mu$ cannot be contained in $S_{k-1}, S_{k-2}, \dots$ until we get to $S=S_0$ and obtain a contradiction.

Next we show that $L \in \operatorname{relint} F_{L}$. Assume that $\CV(L) = S_k$, let $V_{k} =\{f|_{S_{k}}: f\in V\}$. and let $C_{k}$ denote the cone consisting of functionals in $V_{k}^{*}$ having a finitely atomic representing measure supported in $S_{k}$. For $m\in F_{L}$, 
define $\widetilde{m} \in V_{k}^{*}$ by $\widetilde{m}(f|_{S_{k}}) := m(f)$. To see that $\widetilde{m}$ is well-defined, note that from Theorem \ref{thm:main} and Corollary \ref{cor:BT}, $m$ has a finitely atomic representing measure supported in $CV(m)$ ($\subseteq CV(L)$). Thus  $m$ has the form $m = \sum a_{i} L_{s_{i}}$ with $a_{i} > 0$ and $s_{i} \in CV(m)$. Now if $f \in V$ satisfies $f|_{S_{k}} \equiv 0$, then $m(f) = \sum a_i f(s_i) = 0$, so $\widetilde{m}$ is well-defined. The preceding remarks show that $\widetilde{L}$ is well-defined. 

As in the proof of Theorem \ref{thm:stabilize}, $\widetilde{L}$ is strictly positive, so Corollary \ref{cor:int} implies that $\widetilde{L} \in \operatorname{int} C_{k}$, and, in particular, $\widetilde{L} \in \operatorname{relint} C_{k}$. Now suppose $J\in F_{L}$, so that $\widetilde{J}$ is well-defined.  For $t\in \mathbb{R}$ sufficiently close to $1$, $\widetilde{K} :=  t\widetilde{L} + (1-t)\widetilde{J}$ satisfies $\widetilde{K} \in C_{k}$. It follows that $K := tL + (1-t)J$ has a finitely atomic representing measure supported in $S_{k}$, and that if $f\in V$ satisfies $f|_{S_{k}} \equiv 0$, then $K(f) = 0$. Thus, from the above characterization of $F_{L}$,  $K \in F_{L}$, so $L\in \operatorname{relint} F_{L}$.

Now we show that $F_L$ is a face of $M$. Suppose that there exist $\ell_1, \ell_2 \in M$ such that $\ell_1+\ell_2 \in F_L$ but at least one of the functionals $\ell_i$ doesn't lie in $F_L$. Without loss of generality we may assume that $\ell_1 \notin F_L$. Then, by Theorem \ref{thm:main}, $\ell_1$ has a finitely atomic representing measure whose support is not contained in $\mathcal{CV}(L)$. Since $\ell_2$ also has a finitely atomic representing measure, we see that $\ell_1+\ell_2$ has a finitely atomic representing measure whose support is not contained in $\mathcal{CV}(L)$. This is a contradiction to Theorem \ref{thm:main} part (2), since $\ell_1+\ell_2\in F_L$. It follows that $F_L$ is a face of $M$.

Let $F$ be a face of $M$ and let $L \in \operatorname{relint} F$. Since $L$ belongs to the relative interior of a unique face of $M$ and $L \in \operatorname{relint} F_{L}$, it follows that $F = F_{L}$. If $m \in M$ satisfies $\CV(m) = \CV(L)$, then $F_{m} = F_{L}$, so $m \in \operatorname{relint} {F_m} = \operatorname{relint} F_{L}$. Conversely, suppose $m \in \operatorname{relint} F_{L}$. Since $m \in \operatorname{relint} F_{m}$, it follows by the uniqueness property in the facial decomposition that $F_{L} = F_{m}$. Thus $L \in F_{m}$ and $m\in F_{L}$ from which it follows that $\CV(L) = \CV(m)$. 
\end{proof}

To illustrate Theorem \ref{prop:facial}, we give two examples, starting with a continuation of Example \ref{exm:support}.

\begin{example} 
As in Example \ref{exm:support}, let $S = [-1,1]$ (with the Euclidean topology), and let $f$ be the
Borel measurable function on $S$ given by $f(0) = 1$ and $f(x) = 0$ ($x \neq 0$). Let $V$ be the
vector space spanned by $f$ and $g:= 1- f$. Note that for nonzero points $x$ and $y$ in $S$, we have $L_{x} = L_{y}$ in $V^*$. A basis for $V^*$ is therefore given by the functional $L$ of Example \ref{exm:support},
$L = 2L_{1}$ ($=2L_{x}$ for $x\neq 0$), together with $J:= L_{0}$. It is 
straightforward to verify that $P^*_{V} = M = \{H\equiv aL + bJ: a,b \ge 0\}$.
The four faces of $M$ are given as follows: i) $F_{J+L}$;
thus $F_{J+L} = M$, and  $H \in \operatorname{relint} F_{J+L} \longleftrightarrow 
a,b >0$, in which case $\CV(H) = S$;
ii) $F_{L}$;
thus  $H \in \operatorname{relint} F_{L} \longleftrightarrow 
a>0, ~b=0$, in which case $\CV(H) = S \setminus \{0\}$;
iii) $F_{J}$;
thus  $H \in \operatorname{relint} F_{J} \longleftrightarrow 
a=0, ~b>0$, in which case $\CV(H) = \{0\}$;
iv) $F_{\{0\}} \equiv \{0\}$, where $\CV(0) = \emptyset$.

\end{example}

\begin{example}
Let $S=\RR^n$, and let $V$ be the vector space quadratic forms (homogeneous polynomials) in $n$ variables. Then the cone $P_V$ can be identified with the cone $\mathcal{S}^n_+$ of positive semidefinite matrices. Here we recover the usual facial decomposition of $\mathcal{S}^n_+$ \cite[p. 78]{barv}: core varieties are subspaces of $\RR^n$, and for a subspace of $M\subseteq \RR^n$ the face $F$ corresponding to $M$ consists of positive semidefinite matrices with kernel contained in $M$. The relative interior of $F$ consists of positive semidefinite matrices whose kernel is equal to $M$. 
\end{example}

We next present a geometric termination criterion for the iterative construction of the core variety for functionals in $M$.

\begin{proposition}\label{prop:faces}
Let $L\in V^*$ be a linear functional with a representing measure. Then $\CV(L)=S_1=\mathcal{Z}(p \in P \,\,\, \mid \,\,\, L(p)=0)$ if and only if $F_L$ is an \normalfont{exposed face} (\cite[p. 162]{rock}) of $M$.
\end{proposition}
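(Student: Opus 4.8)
The plan is to reduce the whole proposition to a single lemma describing the exposed faces of $M$ concretely, and then to either construct or recognize the exposing function. First I would record the relevant description of exposed faces of the cone $M$. Since $\overline{M}=P^{*}$ by Proposition \ref{lemma:interior}, the dual cone is $M^{*}=P$, so every supporting hyperplane of $M$ passes through the origin and has the form $\{m\in V^{*}: m(f)=0\}$ for some $f\in P$; thus the proper exposed faces of $M$ are exactly the sets $E_{f}:=\{m\in M: m(f)=0\}$ with $f\in P$ (and $E_{0}=M$ is the improper exposed face). The key lemma I would then prove is that, for $f\in P$ and $m\in M$,
$$ m(f)=0 \iff \CV(m)\subseteq \mathcal{Z}(f). $$
This follows from Theorem \ref{thm:main} and Corollary \ref{cor:BT}: writing $m=\sum a_i L_{s_i}$ with $a_i>0$ and $s_i\in\CV(m)$, the nonnegativity of $f$ forces $0=m(f)=\sum a_i f(s_i)$ to vanish term by term, and since $\CV(m)$ is the union of the supports of \emph{all} finitely atomic representing measures of $m$, the equality $m(f)=0$ (which holds against every representing measure) makes $f$ vanish at every point of $\CV(m)$; the converse is immediate. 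Consequently $E_{f}=\{m\in M: \CV(m)\subseteq \mathcal{Z}(f)\}$.

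For the forward implication I would assume $\CV(L)=S_{1}$ and set $Q:=\{p\in P: L(p)=0\}$, a nonempty convex cone with $\mathcal{Z}(Q)=S_{1}$. I would choose any $f\in\operatorname{relint}(Q)$ and verify that $\mathcal{Z}(f)=\mathcal{Z}(Q)=S_{1}$: the inclusion $\mathcal{Z}(Q)\subseteq\mathcal{Z}(f)$ is trivial since $f\in Q$, and for the reverse, if $f(s)=0$ then the evaluation $p\mapsto p(s)$ is a linear functional nonnegative on $Q$ (because $Q\subseteq P$) which vanishes at the relative-interior point $f$, hence vanishes on all of $Q$, giving $s\in\mathcal{Z}(Q)$. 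The lemma then yields $E_{f}=\{m\in M:\CV(m)\subseteq S_{1}\}=\{m\in M:\CV(m)\subseteq \CV(L)\}=F_{L}$, so $F_{L}$ is exposed. For the reverse implication, suppose $F_{L}=E_{f}$ for some $f\in P$. Since $L\in F_{L}$, the lemma gives $L(f)=0$, so $f\in Q$ and therefore $S_{1}=\mathcal{Z}(Q)\subseteq\mathcal{Z}(f)$. As $\CV(L)\subseteq S_{1}$ always, it remains to show $S_{1}\subseteq\CV(L)$: given $s\in S_{1}\subseteq\mathcal{Z}(f)$ we have $L_{s}(f)=f(s)=0$, so $L_{s}\in E_{f}=F_{L}$, whence $\CV(L_{s})\subseteq\CV(L)$; since $s\in\CV(L_{s})$ (the support of $\delta_{s}$ lies in its core variety by Theorem \ref{thm:main}), we conclude $s\in\CV(L)$, giving $\CV(L)=S_{1}$.

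I expect the main obstacle to be the lemma together with the passage from the entire cut-out face $Q$ to a \emph{single} exposing function. The content of the proposition is precisely that the (generally infinite) family of defining inequalities for $S_{1}$ can be collapsed to one function $f$ exactly when $\CV(L)$ stabilizes in one step; the relative-interior argument is what performs this collapse, and verifying $\mathcal{Z}(f)=S_{1}$ for $f\in\operatorname{relint}(Q)$ is the delicate point (in particular it handles the degenerate case $Q=\{0\}$, where $f=0$ and $E_{0}=M$, corresponding to strictly positive $L$ and $S_{1}=S$). Once the lemma and this reduction are in place, both implications are short, and the role of point evaluations $L_{s}$ in transferring information from $\mathcal{Z}(f)$ back to $\CV(L)$ is routine.
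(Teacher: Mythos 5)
Your proof is correct, but it takes a genuinely different route from the paper's. The paper works with the conjugate face $F^\triangle_L=\{p\in P \mid m(p)=0 \text{ for all } m\in F_L\}$ and the minimal exposed face $F'_L$ of $M$ containing $F_L$ (the conjugate face of $F^\triangle_L$), reducing both implications to the dichotomy $F_L=F'_L$ versus $F_L\subsetneq F'_L$: in one direction it invokes Theorem \ref{prop:facial} to say that $F_L\subsetneq F'_L$ forces a strictly larger core variety, and in the other it perturbs $L$ to $L'=L+L_v$ for a point $v\in S_1\setminus\CV(L)$ to exhibit an element of $F'_L$ with strictly larger core variety. You instead characterize the exposed faces of $M$ outright as the sets $E_f=\{m\in M \mid m(f)=0\}$ with $f\in M^*=P$, prove the clean equivalence $m(f)=0\iff\CV(m)\subseteq\mathcal{Z}(f)$ via Theorem \ref{thm:main} and Corollary \ref{cor:BT}, and use a relative-interior point of $Q=P\cap\ker L$ to collapse the whole family of inequalities defining $S_1$ to a single exposing function. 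This makes explicit a step the paper leaves implicit (why the intersection of exposed faces defining $F'_L$ is itself cut out by a single supporting hyperplane), avoids relying on Theorem \ref{prop:facial} beyond the definition of $F_L$, and handles the degenerate cases ($Q=\{0\}$, $m=0$, $F_L=M$) cleanly; the paper's version is shorter because it leans on the facial decomposition already established. Both arguments ultimately rest on the same facts: that elements of $M$ are finitely atomic with atoms ranging exactly over the core variety.
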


\begin{proof}
Let $F^\triangle_L$ be the conjugate (or dual) face of $F_L$ in $P$: $$F^\triangle_L=\{p \in P \,\,\, \mid m(p)=0 \,\,\, \text{for all $m \in F_L$}\}.$$ The minimal exposed face of $M$ containing $F_L$ is $F'_L$, which is the conjugate (dual) face of $F^\triangle_L$:
$$F'_L=\{\ell \in M \,\,\,\mid \,\,\, \ell(p)=0 \,\,\, \text{for all $p \in F^\triangle_L$}\}.$$ 
If $F_L\subsetneq F'_L$, then by Theorem \ref{prop:facial}, $F'_L$ corresponds to a strictly large core variety, and therefore the iterative construction has not terminated.

Now suppose that $\CV(L)\subsetneq \mathcal{Z}(p \in P \, \mid \, L(p)=0)$. Let $v \in \mathcal{Z}(p \in P \, \mid \, L(p)=0) \setminus \CV(L)$ and let $L_v \in V^*$ be the point evaluation functional at $v$. Consider the functional $L'=L+L_v$. From Theorem \ref{thm:main} part (2) it follows that $\{\mathcal{CV}(L) \cup v\} \subseteq \mathcal{CV}(L')$. Therefore $L'$ has a strictly larger core variety, but we also have $L' \in F'_L$. Therefore by Theorem \ref{prop:facial} we see that $F_L \subsetneq F'_L$.
 \end{proof}
 
 \begin{proposition}\label{prop:term}
 Suppose that $S_k$ is a finite set of points. Then $\mathcal{CV}(L)=S_k$ or $\mathcal{CV}(L)=S_{k+1}$, i.e. the iterative construction of the core variety is guaranteed to terminate in at most one more step.
 \end{proposition}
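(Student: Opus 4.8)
The plan is to reduce the whole statement to the single claim that $S_{k+1}=S_{k+2}$, and then to prove that claim by a direct ``repair'' construction that exploits the finiteness of $S_k$. First I would note that the construction is \emph{sticky}: since each $S_{j+1}$ is determined by $S_j$ alone (it is $\mathcal{Z}$ of the functions in $\ker L$ that are nonnegative on $S_j$), once two consecutive iterates agree, all later ones agree. Hence it suffices to prove $S_{k+1}=S_{k+2}$: granting this, $S_{k+1}$ is a fixed point, so $\CV(L)=S_{k+1}$, and this equals $S_k$ exactly when $S_{k+1}=S_k$, yielding the two stated alternatives. Since $S_{k+2}\subseteq S_{k+1}$ is automatic, the real content is the reverse inclusion $S_{k+1}\subseteq S_{k+2}$. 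I would dispose of the degenerate cases $S_{k+1}=\emptyset$ and $S_{k+1}=S_k$ at once, where the claim is trivial.

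Next I would build a single function $Q$ that cuts out $S_{k+1}$ inside $S_k$. Because $S_k$ is finite, so is $S_k\setminus S_{k+1}$, and for each $s$ in this set the definition of $S_{k+1}$ provides some $q^{(s)}\in V$ with $L(q^{(s)})=0$, $q^{(s)}|_{S_k}\ge 0$, and $q^{(s)}(s)>0$. Setting $Q:=\sum_{s\in S_k\setminus S_{k+1}} q^{(s)}$, a finite sum, I obtain $Q|_{S_k}\ge 0$, $L(Q)=0$, $Q>0$ on $S_k\setminus S_{k+1}$, and $Q\equiv 0$ on $S_{k+1}$.

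The heart of the argument is then a contradiction. Suppose $S_{k+2}\subsetneq S_{k+1}$, so there are $s_0\in S_{k+1}$ and $p\in V$ with $L(p)=0$, $p|_{S_{k+1}}\ge 0$, and $p(s_0)>0$. Such a $p$ is only guaranteed nonnegative on $S_{k+1}$, not on the larger set $S_k$, so it is not by itself admissible in the construction of $S_{k+1}$. The repair is to form $g:=p+cQ$ for a large constant $c>0$: on $S_{k+1}$ we have $g=p\ge 0$ since $Q$ vanishes there, while on the finite set $S_k\setminus S_{k+1}$ we have $Q>0$, so a large enough $c$ forces $g>0$ there. Thus $g|_{S_k}\ge 0$ and $L(g)=L(p)+cL(Q)=0$, so $g$ lies in the family whose common zero set defines $S_{k+1}$; hence $S_{k+1}\subseteq\mathcal{Z}(g)$ and in particular $g(s_0)=0$. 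But $g(s_0)=p(s_0)+cQ(s_0)=p(s_0)>0$, a contradiction. Therefore $S_{k+1}=S_{k+2}$, which finishes the proof.

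I expect the main obstacle, and the one place where finiteness is indispensable, to be precisely this repair step. The a priori threat to the proposition is that passing to $S_{k+2}$ might strip away points of $S_{k+1}$ (in the worst case collapsing it to the empty set); the construction above rules this out by showing no point is lost. For an infinite $S_k$ one cannot dominate $p$ by a single multiple of $Q$ uniformly, and indeed the example preceding this proposition shows the construction may then run for $\dim V-1$ steps; it is exactly the finiteness of $S_k\setminus S_{k+1}$ that lets one constant $c$ do the job. I would also stress that the argument invokes neither the Standing Assumption nor any positivity of $L$, relying only on the definitions of the iterates and on the finiteness hypothesis.
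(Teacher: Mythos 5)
Your proof is correct, and it takes a genuinely different route from the paper's. You work entirely inside the iterative construction: after reducing everything to the single claim $S_{k+1}=S_{k+2}$ (using that each iterate is determined by its predecessor, so the construction is ``sticky'' once two consecutive sets agree), you build an explicit certificate $Q\in\ker L$, nonnegative on $S_k$ and vanishing precisely on $S_{k+1}$ inside $S_k$, and then repair any witness $p$ for a point of $S_{k+1}\setminus S_{k+2}$ by adding a large multiple $cQ$; the finiteness of $S_k\setminus S_{k+1}$ is exactly what allows a single constant $c$ to dominate $p$ on that set, and evaluating at $s_0$ gives the contradiction. This is elementary, uses no representing measures, no Standing Assumption, and none of the facial machinery. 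The paper argues convex-geometrically instead: since $S_k$ is finite, the cone $M_k$ of functionals representable by measures supported on $S_k$ is polyhedral, hence closed with all faces exposed; if $L$ is interior to $M_k$ then $\CV(L)=S_k$ by Corollary \ref{cor:int}, if $L$ lies on the boundary then $\CV(L)=S_{k+1}$ by the exposed-face criterion of Proposition \ref{prop:faces}, and if $L\notin M_k$ then closedness yields (by separation) a function strictly positive on $S_k$ annihilated by $L$, so $S_{k+1}=\emptyset$. The paper's route explains conceptually why finiteness helps (polyhedrality forces exposedness) and ties the result to the facial decomposition of Theorem \ref{prop:facial}; yours is shorter, self-contained, and makes the role of finiteness completely explicit through the choice of $c$. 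Both arguments are valid.
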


 \begin{proof}
 Since $S_k$ is a finite set of points, it follows that the cone $M_k$ of linear functionals with a representing measure supported on $S_k$ is simply the conical hull of point evaluations $L_s$ with $s \in S_k$. It follows that $M_k$ is a polyhedral cone, and in particular, $M_k$ is closed and all faces of $M_k$ are exposed. 
 
Suppose that $L$ has a representing measure supported on $S_k$, which is equivalent to $L \in M_k$. If $L$ lies in the interior of $M_k$ then $\mathcal{CV}(L)=S_k$ by Corollary \ref{cor:int}. If $L$ lies on the boundary of $M_k$, then $\mathcal{CV}(L)=S_{k+1}$, by Proposition \ref{prop:faces}. Finally if $L \notin M_k$ then, since $M_k$ is closed, there exists $p$ in the interior of $P_k$ such that $L(p)=0$. Such $p$ is strictly positive on $S_k$, and therefore, $\mathcal{CV}(L)=S_{k+1}=\emptyset$. 
 \end{proof}

\begin{example}

Both cases described in Proposition \ref{prop:term} can arise. In \cite[Example 3.8]{F2} we have $S = S_{0} = \mathbb{R}^{2}$ and $\CV(L) = S_{1} = S_{2} = \cdots$, with $\operatorname{card} \CV(L) = 10$. By contrast, \cite[Example 2.19]{F2} gives $S = \mathbb{R}^{2}$, $S_1$ is a set of $9$ points, and
$\CV(L) = S_{2} = S_{3} = \cdots$, with $\operatorname{card} \CV(L)  = 8$.

\end{example} 
 
\begin{remark}\label{rem:farm}
Note that the iterative construction of the core variety in terms of $S_0$, $S_1$, \ldots,
can be carried out in the setting of a general nonempty set $S$ and a linear functional $L$ acting on a finite dimensional vector space of real valued functions on $S$. All of our results concerning the connection
between the core variety and the cone of finitely atomic representing measures can then be re-formulated in terms of $C$, the conical hull of the point evaluations.         
\end{remark}        
 
\subsection{Coordinatized Perspective} A different way to think about $V$ is to pick an explicit basis $f_1, \dots, f_k$ of $V$ where $k=\dim V$. Now we can identify $V^*$ with $\RR^k$ by identifying a linear functional $L \in V^*$ with its vector of values $(L(f_1), \dots, L(f_k))$ on the basis. We will use $S^*$ to denote the set of all linear functionals $L_v$ which come from point evaluations on $S$. Define the map $\varphi_V: S\rightarrow \RR^k$ by
$$\varphi_V(v)=(f_1(v),\dots, f_k(v)).$$
Under the above identification $S^*$ is the image of $S$ under the map $\varphi_V$: $$S^*=\varphi_V(S).$$
For instance when $V$ is the vector space of polynomial functions of degree at most $d$ and $S$ is a subset of $\RR^n$ the set $S^*$ is the image of $S$ under the $d$-th Veronese map $\nu_{d}$ which sends  $x=(x_1,\dots,x_n) \in \RR^n$ to all monomials $x^\alpha$ of degree at most $d$:
$$\nu_d: \RR^n \rightarrow \RR^{\binom{n+d}{d}}, \quad \nu_d(x_1,\dots,x_n)=(1,x_1, \dots, x^{\alpha}, \dots) \quad \text{where $|\alpha| \leq d$}.$$
 
\section{Truncated Riesz-Haviland Theorem}

We now prove a generalized version of the truncated Riesz-Haviland Theorem. Our setting is as follows: Let $U$ be a subspace of a finite dimensional vector space $V$ of functions on a nonempty set $S$; we equip $V$ with Euclidean topology. Let $P_U$ and $P_V$ denote the respective cones of nonnegative functions. We will assume that $P_V$ is full-dimensional, and thus has non-empty interior in $V$, and also that it contains a strictly positive function $\rho$. Since all functions in the interior of $P_V$ are then strictly positive, we may assume that $\rho$ lies in the interior of $P_V$.  Suppose that $L$ can be \textit{extended} to a functional $\widehat{L} \in V^*$ such that $\widehat{L}$ is \textit{$V$-positive}: $$\widehat{L}(p)\geq 0 \,\,\, \text{for all}\,\,\, p\in P_V, \,\,\,\,\, \text{and} \,\,\,\,\, \widehat{L}(f)=L(f) \,\,\,\,\, \text{for all} \,\,\,\,\, f \in U.$$ We will show that with an additional hypothesis on $V$ and $U$ we may conclude that $L$ is a nonnegative linear combination of point evaluations. In the context of moment problems, this will allow us to conclude that $L$ has a representing measure.

Let $S^* \subset V^*$ be the set of linear functionals in $V^*$ corresponding to point evaluations on $S$, and let $C$ be the conical hull of $S^*$ in $V^*$. From Proposition \ref{lemma:interior}, we see that if $C$ is closed, then $P^* = \bar{C} = C \subseteq P^*$, so $P^* = C$. If the set $S^*$ is compact, then $C$ is closed  \cite[Exercise 4.17]{BPT}, and then $V$-positivity of $\widehat{L}$ already implies that $\widehat{L}$ is a nonnegative combination of point evaluations, and therefore the same holds for $L$. For the case where $S$ is a compact subset of $\mathbb{R}^{n}$ and $V = \{f|_S: f \in \mathcal{P}_{d}\}$, Tchakaloff \cite{Tch} showed that $C$ is closed. It can be shown more generally that if $S$ is a compact metric space and the functions in $V$ are continuous, then $S^{*}$ is compact in $V^{*}$.

In case $S^*$ is not compact we need to define an appropriate compactification using a strictly positive function $\rho$ in the interior of $P_V$. Define a hyperplane $H \in V^*$ consisting of functionals which evaluate to $1$ on $\rho$:
$$H=\{L \in V^* \,\,\, \mid \,\,\, L(\rho)=1\}.$$ 
Let $\widetilde{S}^*\subset H$ be the set of points $\frac{1}{\rho(y)}\ell_{y}$, with $\ell_y \in S^*$. We note that $\widetilde{S}^*$ is bounded ($\widetilde{S}^*$ lies inside the set $P_V^*\cap H$, which is a base of a closed pointed cone $P_V^*$, and is therefore compact (cf. \cite[p.66]{barv})), but it may fail to be closed. Finally, 
let $[\widetilde{S}^*]$ be the closure of $\widetilde{S}^*$ in $H$. Now we can state the Generalized Truncated Riesz-Haviland Theorem:

\begin{theorem}\label{thm:trh}Let $V$ be a finite dimensional vector space of functions on $S$ and let $U$ be a subspace of $V$. Let $L \in U^*$ be a linear functional which admits a $V$-positive extension. Suppose that for all functionals $\ell \in [\widetilde{S}^*] \setminus \widetilde{S}^*$ we have $\ell(f)=0$ for all $f \in U$. Then $L$ is a nonnegative linear combination of point evaluations.
\end{theorem}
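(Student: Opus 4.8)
The plan is to place $\widehat L$ inside the closed cone $P_V^*$, slice by the hyperplane $H$ to land in the compact base $B := H \cap P_V^*$, then represent that slice as a finite convex combination of points of $[\widetilde S^*]$ via Carath\'{e}odory's Theorem; the hypothesis on $[\widetilde S^*]\setminus \widetilde S^*$ will let us discard the ``ideal'' boundary points once we restrict back to $U$. To begin, note that $V$-positivity of $\widehat L$ says exactly $\widehat L \in P_V^*$, and Proposition \ref{lemma:interior} gives $P_V^* = \overline C$. If $\widehat L = 0$ then $L = \widehat L|_U = 0$ is the empty nonnegative combination and we are done, so assume $\widehat L \neq 0$. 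Since $\rho$ lies in the interior of $P_V$, the functional $\ell \mapsto \ell(\rho)$ is strictly positive on $P_V^*\setminus\{0\}$; hence $\widehat L(\rho) > 0$, the normalization $\widehat L_0 := \widehat L / \widehat L(\rho)$ lies in $B$, and $B$ is compact as a base of the closed pointed cone $P_V^*$ (cf. \cite[p.66]{barv}).

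The key step is to identify $B$ with $\operatorname{conv}([\widetilde S^*])$. On one hand, an element of $C \cap H$ has the form $\sum_i a_i \ell_{y_i}$ with $a_i \ge 0$ and $\sum_i a_i \rho(y_i)=1$; setting $b_i := a_i\rho(y_i)$ rewrites it as the convex combination $\sum_i b_i\,\tfrac{1}{\rho(y_i)}\ell_{y_i}$ of points of $\widetilde S^*$, so $C \cap H = \operatorname{conv}(\widetilde S^*)$. On the other hand $B = \overline C \cap H$, and since $H$ meets $P_V^*$ in a compact base one has $\overline C \cap H = \overline{C\cap H} = \overline{\operatorname{conv}(\widetilde S^*)}$. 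Because $\widetilde S^*$ is bounded its closure $[\widetilde S^*]$ is compact, and in finite dimensions the closed convex hull of a bounded set coincides with the convex hull of its closure (the convex hull of a compact set is compact, by Carath\'{e}odory). Thus $B = \operatorname{conv}([\widetilde S^*])$.

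With this in hand, Carath\'{e}odory's Theorem yields a finite representation $\widehat L_0 = \sum_i t_i m_i$ with $t_i > 0$, $\sum_i t_i = 1$, and each $m_i \in [\widetilde S^*]$. Restricting to $U$ and invoking the hypothesis that $m(f) = 0$ for every $m \in [\widetilde S^*]\setminus \widetilde S^*$ and $f \in U$, the terms indexed by boundary points contribute nothing, so only those with $m_i \in \widetilde S^*$ survive. Writing each surviving $m_i = \tfrac{1}{\rho(y_i)}\ell_{y_i}$ with $y_i \in S$ and unwinding the normalization, we obtain, for all $f \in U$, $L(f) = \widehat L(f) = \widehat L(\rho)\sum_i t_i\, m_i(f) = \sum_i c_i f(y_i)$ with $c_i := \widehat L(\rho)\, t_i/\rho(y_i) \ge 0$, which exhibits $L$ as a nonnegative linear combination of point evaluations, as claimed.

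The main obstacle will be the identification $B = \operatorname{conv}([\widetilde S^*])$, and specifically the two closure interchanges $\overline C \cap H = \overline{C \cap H}$ and $\overline{\operatorname{conv}(\widetilde S^*)} = \operatorname{conv}([\widetilde S^*])$. Both rest on the compactness of the base and on finite-dimensional convexity, and they are exactly what tame the possibly non-closed cone $C$: without them one only knows $\widehat L \in \overline C$, which is too weak to produce a \emph{finite} representation. Everything after this identification is a clean application of Carath\'{e}odory together with the vanishing hypothesis on the ideal points.
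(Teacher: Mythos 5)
Your proposal is correct and follows essentially the same route as the paper: both identify $P_V^*=\overline{C}$ via Proposition \ref{lemma:interior}, use the boundedness of $\widetilde{S}^*$ to conclude that $P_V^*$ is the (closed) conical hull of the compact set $[\widetilde{S}^*]$ — you do this at the level of the compact base $B=H\cap P_V^*$ rather than the cone, which is only a normalization — and then write $\widehat{L}$ as a finite conical combination of points of $[\widetilde{S}^*]$ and discard the ideal points upon restriction to $U$. Your version is slightly more explicit about the two closure interchanges and the Carath\'{e}odory step, but the substance is identical.
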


\begin{proof}
From the preceding remarks, we may assume that 
$S^*$ is not compact. Let $M$ be the cone of representable functionals in $V^*$, and let $C$ be the conical hull of $S^*$, so that $C = \operatorname{ConicalHull}(\widetilde{S}^*)$. Recall from Proposition \ref{lemma:interior} that $\overline{C} = \overline{M} = P_{V}^*$. Since $\widetilde{S}^*$ is bounded, it follows that
$\operatorname{ConicalHull}([\widetilde{S}^*]) = \operatorname{clos}(\operatorname{ConicalHull}(\widetilde{S}^*))$, whence $P_V^*$ is simply the conical hull of $[\widetilde{S}^*]$. Therefore, we can express the extension $\widehat{L}$ as a finite  conical combination of functionals in $[\widetilde{S}^*]$:
$$ \widehat{L}=\sum_{\ell_i \in \widetilde{S}^*} \alpha_i\ell_i+\sum_{m_i \in [\widetilde{S}^*]\setminus \widetilde{S}^*} \beta_im_i,$$
where $\alpha_i,\beta_i>0$. When we restrict $\widehat{L}$ to $U$ we see that all functionals $m_i$ evaluate to $0$, and therefore the linear functional $L$ can be written as:
$$L=\sum_{\ell_i \in \widetilde{S}^*} \alpha_i\ell_i.$$
Thus $L$ is a nonnegative combination of point evaluations.

\end{proof}

\begin{exm}\label{exm:trig}
In the case when $S$ is compact, the discussion preceding Theorem \ref{thm:trh} implies that in Theorem \ref{thm:trh} we may take $U = V$ and $[\widetilde{S}^{*}] = \widetilde{S}^{*}$. Thus, if $S$ is compact, $L$ has a representing measure if and only if $L$ is $U$-positive. For example, the  Truncated Trigonometic Moment Problem on the interval $[0, 2\pi]$ falls within the scope of this case of Theorem \ref{thm:trh}.  Let $U \equiv V$ be the space of trigonometric polynomials $p$ of degree at most $n$, i.e., $p(t) = a_{0} + \sum_{j=1}^{n} [a_{j}cos(jt) + b_{j}sin(jt)]$, and let $L$ be a linear functional on $U$.  Classical results show that $L$ has a representing measure if and only if an associated Toeplitz matrix $M$ (the corresponding complex moment matrix) is positive semidefinite \cite[Theorem I.I.12]{AK}. This problem may be reformulated in terms of trigonometric polynomials on the unite circle $\mathbb{T}$.  Since, by the Fej\'{e}r-Riesz Theorem \cite{DR}, each nonnegative trigonometric polynomial on $\mathbb{T}$ of degree $n$ is the complex square of an analytic  polynomial of degree $n$, it follows that  $M$ is positive semi-definite if and only if $L$ is a positive functional, so the case of Theorem \ref{thm:trh} where $S$ is compact and the functions in $V$ are continuous is consistent with the classical solution of the Truncated Trigonometric Moment Problem.
\end{exm}
We next use Theorem \ref{thm:trh} to present a concrete realization of the Truncated Riesz-Haviland Theorem in a case where $S$ has special properties. Let $S$ be a locally compact Hausdorff space and let $C(S)$ denote the vector space of all continuous real-valued functions on $S$. A function $f : S \rightarrow \RR$ \textit{vanishes at infinity} if, for each $\varepsilon >0$, there is a compact set $C_\varepsilon \subseteq S$ such that $S\setminus C_\varepsilon \subseteq  \{x \in S \mid |f(x)| < \varepsilon \}$ (cf. \cite[p. 65]{Co}).

\begin{cor}\label{cor:trh}
Suppose that there exists $\rho \in V$ in the interior of $P_V$ such that $\frac{f}{\rho}$ vanishes at infinity for all $f \in U$. Let $L \in U^*$ be a linear functional which admits a $V$-positive extension. Then $L$ has a finitely atomic representing measure.
\end{cor}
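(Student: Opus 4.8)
The plan is to deduce the corollary directly from the Generalized Truncated Riesz–Haviland Theorem (Theorem \ref{thm:trh}). All the structural hypotheses of that theorem are already granted here: $P_V$ is full-dimensional and contains the strictly positive $\rho$ in its interior, and $L$ is assumed to admit a $V$-positive extension. Consequently the only thing that must be established is the one remaining hypothesis of Theorem \ref{thm:trh}, namely that every functional $\ell$ in the ``boundary at infinity'' $[\widetilde{S}^*]\setminus \widetilde{S}^*$ annihilates $U$, i.e.\ $\ell(f)=0$ for all $f\in U$. Once this is verified, Theorem \ref{thm:trh} expresses $L$ as a nonnegative linear combination $\sum_i \alpha_i \ell_{s_i}$ of point evaluations, and the associated finitely atomic measure $\nu=\sum_i \alpha_i \delta_{s_i}$ is then a representing measure for $L$ (it is a genuine Borel measure because points of $S$ are closed).

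To verify the hypothesis, fix $\ell \in [\widetilde{S}^*]\setminus \widetilde{S}^*$. By definition of the closure in $H$, there is a net $(y_\alpha)$ in $S$ with $\frac{1}{\rho(y_\alpha)}\ell_{y_\alpha}\to \ell$ in the Euclidean topology of $H\subseteq V^*$; note that this map is legitimate and continuous in $y$ since $\rho$ is continuous and strictly positive. I would then regard $(y_\alpha)$ as a net in the one-point compactification $S^+=S\cup\{\infty\}$, which is compact Hausdorff because $S$ is locally compact Hausdorff. Passing to a subnet, I may assume $y_\alpha \to p$ for some $p\in S^+$, while the corresponding subnet still satisfies $\frac{1}{\rho(y_\alpha)}\ell_{y_\alpha}\to \ell$.

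Now split into two cases. If $p\in S$, then since every $f\in V$ is continuous and $\rho$ is continuous and strictly positive, we have $\frac{f(y_\alpha)}{\rho(y_\alpha)}\to \frac{f(p)}{\rho(p)}$ for all $f\in V$, so $\ell=\frac{1}{\rho(p)}\ell_p \in \widetilde{S}^*$, contradicting $\ell\notin \widetilde{S}^*$; hence this case cannot occur. Therefore $p=\infty$, which by the description of neighborhoods of $\infty$ means precisely that the subnet eventually leaves every compact subset of $S$. Here the hypothesis that $f/\rho$ vanishes at infinity does the work: given $f\in U$ and $\varepsilon>0$, choose a compact $C_\varepsilon\subseteq S$ with $|f/\rho|<\varepsilon$ off $C_\varepsilon$; eventually $y_\alpha\notin C_\varepsilon$, so $\bigl|\frac{f(y_\alpha)}{\rho(y_\alpha)}\bigr|<\varepsilon$. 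Letting $\varepsilon\to 0$ gives $\ell(f)=\lim_\alpha \frac{f(y_\alpha)}{\rho(y_\alpha)}=0$. Since $f\in U$ was arbitrary, $\ell$ annihilates $U$, exactly as required by Theorem \ref{thm:trh}.

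The final assembly and the continuity computation are routine; the step that needs genuine care is the dichotomy above, specifically the claim that a limit point $\ell$ of normalized point evaluations which fails to lie in $\widetilde{S}^*$ must come from points $y_\alpha$ escaping to infinity. This is where local compactness is essential, and the one-point compactification is the clean device that separates the two possibilities and reduces the non-trivial case to the $C_0$-type decay of $f/\rho$. I would also double-check the harmless but necessary point that normalizing by $\rho$ causes no difficulty, since $\rho$ is continuous and bounded below by a positive constant on each compact set, so all the limits taken above are legitimate.
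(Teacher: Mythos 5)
Your proof is correct and follows essentially the same route as the paper's: both rest on the dichotomy that a normalized point evaluation $\frac{1}{\rho(y_\alpha)}\ell_{y_\alpha}$ converging to a boundary functional $\ell \in [\widetilde{S}^*]\setminus\widetilde{S}^*$ must come from points escaping every compact subset of $S$ (where the decay of $f/\rho$ forces $\ell(f)=0$ for $f\in U$), since a limit point in $S$ would place $\ell$ in $\widetilde{S}^*$ by continuity. The paper organizes this contrapositively---using the vanishing-at-infinity hypothesis to trap the exceptional points in a compact set and then extracting a convergent subsequence---whereas you extract the limit first via the one-point compactification and split cases afterward; the two arguments are interchangeable here, and the remaining assembly via Theorem \ref{thm:trh} matches the paper exactly.
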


\begin{proof}
Let $\ell$ be a linear functional 
in $[\widetilde{S}^*] \setminus \widetilde{S}^*$. Then $\ell$ is the limit of a sequence of rescaled point evaluations $\frac{1}{\rho(x_i)}\ell_{x_i}$ with $x_i \in S$. Take $f \in U$ and consider the sequence of real numbers $$\frac{1}{\rho(x_i)}\ell_{x_i}(f)=\frac{f(x_i)}{\rho(x_i)}.$$
If this sequence converges to $0$ for all $\ell \in [\widetilde{S}^{*}]\setminus \widetilde{S}^{*}$ and all $f \in U$, then we can apply Theorem \ref{thm:trh}. Suppose that there exist $\ell \in [\widetilde{S}^{*}]\setminus \widetilde{S}^{*}$ and $f \in U$ such that the above sequence of values does not converge to $0$. Then there exists a subsequence $y_i$ such that $ |\frac{f(y_i)}{\rho(y_i)}| \geq \varepsilon$ for some $\varepsilon >0$. Since $\frac{f}{\rho}$ vanishes at infinity, the points $y_i$ are contained in a compact subset of $S$, and some subsequence converges to a point $y \in S$. By continuity, it follows that $\ell=\frac{\ell_y}{\rho(y)}$ 
and therefore $\ell \in \widetilde{S}^*$, which is a contradiction.
\end{proof}

\begin{remark}
Corollary \ref{cor:trh} readily implies the polynomial case of the Truncated Riesz-Haviland Theorem \cite{CF2}. Let $P_k$ denote the vector space $n$-variate polynomials of degree at most $k$. To recover \cite{CF2} from Corollary \ref{cor:trh}, let $S \subseteq \RR^n, U = P_{2d-1}$ or $U = P_{2d-2}$, $V = P_{2d}$, and let $\rho(x) = 1 + ||x||^{2d}$; clearly, for $f \in U$, $f/\rho \in C_0(S)$.
\end{remark}

\begin{example}

The exponents of moments for the polynomial case of the Truncated Riesz-Haviland Theorem in the preceding Remark come from the dilations of the standard simplex. Using Theorem \ref{thm:trh} and Corollary \ref{cor:trh}, this can be readily extended to moments coming from other polytopes as well. For instance, let $K\subset \RR^n$ be the box $[0,a_1]\times\dots\times[0,a_n]$, with $a_i \in \NN$ and let $K'\subset \RR^n$ be the box $K'=[0,b_1]\times\dots\times[0,b_n]$, with each $b_i$ the smallest even integer strictly greater that $a_i$. Let $U$ be the vector space of $n$-variate polynomials with exponents coming from integer points of $K$, and let $V$ be the vector space of $n$-variate polynomials of with exponents coming from integer points of $K'$. Let $S\subseteq \RR^n$, and let $\rho=1+\sum_{i=1}^n x_i^{b_i}$. Then  $\rho$ is in the interior of $P_V$ and for $f \in U$, $f/\rho \in C_0(S)$, so Corollary \ref{cor:trh} applies.
\end{example}

Continuing with the setting of locally compact Hausdorff spaces, we next derive analogues of the Riesz-Haviland Theorem and of Theorem \ref{thm:main} for the generalized Full Moment Problem. Let $C_{0}(S)$ denote the Banach space of all continuous functions on $S$ which
vanish at infinity, equipped with the norm $|| f ||_{\infty} := \operatorname{sup}_{x\in S} |f(x)|$. 
The space $C_{c}(S)$ of all continuous functions on $S$ with \textit{compact support} is dense in $C_{0}(S)$.
The Riesz Representation Theorem \cite[Appendix C, Theorem 18]{Co}
states that $C_{0}(S)^{*}$, the dual space of $C_{0}(S)$, is isometrically isomorphic to $M(S)$, the Banach space of finite regular Borel measures on $S$ (equipped with the norm $||\mu|| := |\mu|(S)$); under this duality,
corresponding to $\mu \in M(S)$ is the functional $\widehat{\mu}$ on $C_{0}(S)$ 
defined by $\widehat{\mu}(f) = \int_S f d\mu$.

Recall that $B_{1}(C_{0}(S)^{*})$, the closed unit ball of $C_{0}(S)^{*}$, 
is weak-* compact (Alaoglu's Theorem) \cite[Ch. V, Theorem 3.1]{Co}. In the sequel we further assume that $S$ is 
$\sigma$-compact and metrizable (e.g., $S = \mathbb{R}^{n}$).
In  this case, $C_{0}(S)$ is separable \cite[Ch. V. Section 5, Exercise 2, page 136]{Co},
and therefore 
$B_{1}(C_{0}(S)^{*})$ is a weak-* compact 
metric space \cite[Thm. V.3.1]{Co}; 
thus each bounded sequence in $C_{0}(S)^{*}$ has a weak-* convergent subsequence,
 a property that we will utilize below.

For the polynomial case and $S \subseteq \mathbb{R}^{n}$, 
in \cite{Sto}
Stochel proved that a multisequence $\beta^{(\infty)}$ has an
$S$-representing measure if and only if each truncation $\beta^{(d)}$ 
($d\ge 1$) has an
$S$-representing measure.
We next present an analogue of this result for the generalized full moment problem.

In the sequel, $S$ is a $\sigma$-compact, locally compact metric space.
Let $V$ be a linear subspace of $C(S)$ which admits a decomposition as a countable
union of increasing finite dimensional subspaces, i.e., $ V = \bigcup_{j=1}^{\infty} V_{j}$,
$V_{j-1}\subseteq V_{j}$, $\dim V_{j} < \infty$ ($j\ge 1$). By a representing measure for a linear functional $L$ on $V$ we mean a finite
 positive regular Borel measure $\mu$ on $S$ such that $L(f) = \int_{S} f d\mu$,  ($f\in V$).

\begin{theorem}\label{stochel} Suppose $1\in V_{1}$ and suppose that for each $j\ge 2$, there exists a strictly positive function $\rho_{j} \in V_{j}$
such that $\frac{q}{\rho_j}\in C_{0}(S)$ for all $q\in V_{j-1}$.
A linear functional $L:V \rightarrow \mathbb{R}$ has a representing measure if and only if each truncation
$L_{j}:= L|_{V_{j}}$ ($j\ge 1$) has a representing measure.
\end{theorem}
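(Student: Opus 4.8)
The plan is to prove the nontrivial (reverse) implication by a weak-* compactness and diagonalization argument, using the weights $\rho_j$ to compensate for the fact that functions in $V$ need not vanish at infinity. The forward implication is immediate: if $\mu$ represents $L$ on all of $V$, then $\int_S f\,d\mu = L(f) = L_j(f)$ for every $f\in V_j$, and $\mu$ is finite since $1\in V_1\subseteq V_j$ forces $\mu(S)=L(1)<\infty$; hence $\mu$ represents each $L_j$.

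For the reverse implication, suppose each $L_j$ has a representing measure $\mu_m$ (a finite positive regular Borel measure on $S$), so that $\int_S f\,d\mu_m = L(f)$ for $f\in V_m$. Since $1\in V_1$, every $\mu_m$ has total mass $\mu_m(S)=L(1)$. The key device is to pass, at each level $j$, to the weighted measure $\rho_{j+1}\mu_m$ for $m\ge j+1$: this is a positive measure of total mass $\int_S \rho_{j+1}\,d\mu_m = L(\rho_{j+1})$, independent of $m$, lying in $M(S)=C_0(S)^*$. For $f\in V_j$ the hypothesis gives $f/\rho_{j+1}\in C_0(S)$, and $\int_S (f/\rho_{j+1})\,d(\rho_{j+1}\mu_m) = \int_S f\,d\mu_m = L(f)$ whenever $m\ge j$. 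Thus the weighting converts the obstructive functions of $V_j$ into genuine weak-* test functions.

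Next I would run a diagonal argument. Because $S$ is $\sigma$-compact and metrizable, $C_0(S)$ is separable, so bounded sequences in $C_0(S)^*$ have weak-* convergent subsequences (as recorded above via Alaoglu). Extracting successively at levels $j=1,2,\dots$ and diagonalizing, I obtain a single subsequence $\nu_m$ of $\{\mu_m\}$ with $\rho_{j+1}\nu_m\to\sigma_j$ weak-* for every $j$, where each $\sigma_j$ is a finite positive measure (the cone of positive measures is weak-* closed, being an intersection of the half-spaces $\{\int\phi\,d\sigma\ge 0:\ \phi\ge 0,\ \phi\in C_0(S)\}$). Testing against $f/\rho_{j+1}\in C_0(S)$ for $f\in V_j$ yields $\int_S (f/\rho_{j+1})\,d\sigma_j = L(f)$, so defining $\mu^{(j)}:=\rho_{j+1}^{-1}\sigma_j$ (finite, since $\int_S 1\,d\mu^{(j)}=L(1)<\infty$, positive, and regular) gives a measure representing $L$ on $V_j$.

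The final and most delicate step is to patch the level-dependent measures $\mu^{(j)}$ into one global representing measure, and this is where I expect the main obstacle to lie: weak-* limits of the unweighted $\nu_m$ need not exist or capture $\int_S f\,d\mu_m$ for $f$ not vanishing at infinity, so a priori the different weightings could yield different measures. To reconcile them I would test against $\phi\in C_c(S)$: on the compact support of $\phi$ the positive continuous $\rho_{j+1}$ is bounded below, whence $\phi/\rho_{j+1}\in C_c(S)\subseteq C_0(S)$ and $\int_S \phi\,d\mu^{(j)} = \int_S (\phi/\rho_{j+1})\,d\sigma_j = \lim_m \int_S \phi\,d\nu_m$. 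The right-hand side depends only on the diagonal sequence and on $\phi$, not on $j$, so all $\mu^{(j)}$ agree on $C_c(S)$; since $C_c(S)$ is dense in $C_0(S)$ and each $\mu^{(j)}\in M(S)$, the uniqueness in the Riesz representation theorem forces $\mu^{(1)}=\mu^{(2)}=\cdots=:\mu$. Then $\int_S f\,d\mu = \int_S f\,d\mu^{(j)} = L(f)$ for every $f\in V_j$ and every $j$, so $\mu$ represents $L$ on $V=\bigcup_j V_j$.
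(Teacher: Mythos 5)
Your argument is correct, and it rests on the same two pillars as the paper's proof: sequential weak-* compactness of bounded sets in $C_0(S)^*$ (via separability of $C_0(S)$ for $\sigma$-compact, locally compact metric $S$) and the use of the weights $\rho_j$ to turn elements of $V_{j-1}$ into genuine $C_0(S)$ test functions. The organization, however, is genuinely different. The paper extracts a single weak-* convergent subsequence from the \emph{unweighted} measures $\mu_m$, obtaining the candidate limit $\mu$ at the outset, and then proves that each weighted sequence $\rho_k\,d\mu_m$ converges weak-* to $\rho_k\,d\mu$: the weighted sequence is norm-bounded and converges to $\rho_k\,d\mu$ on the dense subspace $C_c(S)$ (since $f\rho_k\in C_c(S)$ for $f\in C_c(S)$), so boundedness plus dense convergence upgrades to weak-* convergence; testing against $q_i/\rho_k\in C_0(S)$ then gives $\int q_i\,d\mu=L(q_i)$. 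You instead extract limits of the weighted sequences $\rho_{j+1}\mu_m$ directly, diagonalize over $j$, form the level-$j$ measures $\mu^{(j)}=\rho_{j+1}^{-1}\sigma_j$, and glue them by checking agreement on $C_c(S)$ and invoking uniqueness in the Riesz representation theorem. The paper's route is leaner (one extraction, no patching step, no need to divide by $\rho_{j+1}$), while yours isolates and explicitly disposes of the worry that different weightings could yield different limit measures; both ultimately hinge on the same density-of-$C_c(S)$ mechanism. Your side justifications (weak-* closedness of the positive cone, finiteness of $\mu^{(j)}$ via $1/\rho_{j+1}\in C_0(S)$ using $1\in V_1$, regularity of finite Borel measures on such $S$) are all sound, so there is no gap.
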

\begin{proof} If $\mu$ is a representing measure for $L$, then clearly 
$\mu$ is a representing measure for $L_{j}$ ($j\ge 1$).
For the converse, suppose $\mu_{j}$ is a representing measure for $L_{j}$ ($j\ge 1$).
For each $j$, $\int_{S} 1\, d\mu_{j} =  L_{j}(1) = L_{1}(1)$, so $\{\widehat{ \mu}_{j}\}_{j=1}^{\infty}$
is bounded in $C_{0}(S)^*$. It follows that a subsequence, which we also denote by
$\{\widehat{\mu}_{j}\}$, is weak-* convergent in $C_{0}(S)^*$, say to $\mu$, a
positive finite regular Borel measure on $S$. 
Let $\{q_{i}\}_{i=1}^{\infty}$ denote a basis for $V$. To complete the proof it suffices to
prove that $L(q_{i}) = \int_S q_{i} \, d\mu$ ($i\ge 1$). Fix $i\ge 1$, and let $k$ be such that
$q_{i} \in V_{k-1}$. Since the $V_{j}$'s are increasing, for $j\ge k-1$, we have $\int_S q_{i} d\mu_{j}
= L_{j}(q_{i}) = L(q_{i})$. 
Since $\frac{q_{i}}{\rho_{k}} \in C_{0}(S)$, we have
\begin{equation*}\label{colimit}
\lim_{j \rightarrow \infty}  \int_S  \frac{q_{i}}{\rho_{k}} d\mu_{j} = \int_S  \frac{q_{i}}{\rho_{k}}d \mu.
\end{equation*}
For each $f\in C_{c}(S)$, $f \cdot \rho_{k} \in C_{c}(S)$, so
\begin{equation*}\label{rho}
\lim_{j \rightarrow \infty} \int_S f\cdot\rho_{k} \,d\mu_{j} = \int_S f\cdot\rho_{k} \,d\mu.
\end{equation*}
Now $$\lim_{j \rightarrow \infty} ||\rho_{k}d\mu_{j}|| = \lim_{j \rightarrow \infty} \int_S \rho_{k}d\mu_{j} = 
\int_S \rho_{k}d\mu_{k} = L(\rho_{k}) < +\infty,$$ so $\{ \widehat{\rho_{k}d\mu_{j}}\}_{j=1}^{\infty}$ is bounded
in $C_{0}(S)^{*}$ and   convergent to $ \widehat{\rho_{k}d\mu}$ on the dense subspace
$C_{c}(S)$. Thus, $\{ \widehat{\rho_{k}d\mu_{j}}\}_{j=1}^{\infty} $ 
is weak-* convergent to $ \widehat{\rho_{k}d\mu}$ in $C_{0}(S)^{*}$.
It now follows that
$$L(q_{i}) = \int_S q_{i} d\mu_{k-1} = \lim_{j \rightarrow \infty} \int_S q_{i} d\mu_{j}=\lim_{j \rightarrow \infty} \int_S \frac{q_{i}}{\rho_{k}} (\rho_k d\mu_{j})= \int_S q_{i} d\mu.$$

\end{proof}

To recover Stochel's result from Theorem \ref{stochel}, let $S = \RR^n$, $V = \mathbb{R}[x] \equiv \mathbb{R}[x_{1},\ldots,x_{n}]$, and, for $j \ge 1$, let $V_{j} = \mathcal{P}_{2j}$ and $\rho_{j} = 1 + ||x||^{2j}$.          

\begin{exm}
Continuing Example \ref{exm:trig}, we see that in the case when $S$ is a compact metric space and the functions on $V$ are continuous, Corollary \ref{cor:trh} may be reformulated without any  requirement for the existence of  the functions $\rho_{j}$, and similarly in Theorem \ref{stochel}; equivalently, we may take each $\rho_{j} \equiv 1$. The Full Trigonometric Moment Problem, with moment functional $L$, falls within the scope of these results. It is known that a representing measure exists in this problem if and only if the associated infinite Toeplitz matrix is positive semidefinite \cite[Theorem 5.1.2]{A}, and it follows as in Example \ref{exm:trig} that this condition is equivalent to the positivity of $L$. 
\end{exm}

We next have an analogue of the Riesz-Haviland Theorem for the generalized full moment problem.

\begin{theorem}\label{rh} Let $V$ be as in Theorem \ref{stochel}.
A linear functional $L:V\rightarrow \mathbb{R}$ has a representing measure if and only if
$L$ is $V$-positive.
\end{theorem}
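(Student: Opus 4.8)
The plan is to prove Theorem~\ref{rh} by combining Theorem~\ref{stochel} (the generalized Stochel theorem) with the Generalized Truncated Riesz-Haviland Theorem (Theorem~\ref{thm:trh}), applied levelwise. The forward direction is immediate: if $L$ has a representing measure $\mu$ and $p \in V$ satisfies $p \ge 0$ on $S$, then $L(p) = \int_S p\, d\mu \ge 0$, so $L$ is $V$-positive. The substance is the converse.

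For the converse, suppose $L$ is $V$-positive. By Theorem~\ref{stochel}, it suffices to show that each truncation $L_j := L|_{V_j}$ has a representing measure, since the hypotheses on $V$ (the increasing exhaustion $V = \bigcup_j V_j$, with $1 \in V_1$ and strictly positive $\rho_j \in V_j$ satisfying $q/\rho_j \in C_0(S)$ for all $q \in V_{j-1}$) are exactly the ones needed to invoke that theorem. So I would fix $j$ and aim to produce a representing measure for $L_j$. The natural device is to apply Corollary~\ref{cor:trh} (or directly Theorem~\ref{thm:trh}) with the subspace $U = V_j$ sitting inside the ambient space $V_{j+1}$: the hypothesis gives a strictly positive $\rho_{j+1} \in V_{j+1}$ in the interior of $P_{V_{j+1}}$ with $q/\rho_{j+1} \in C_0(S)$ for all $q \in V_j$, which is precisely the vanishing-at-infinity condition that Corollary~\ref{cor:trh} requires. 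Thus, provided $L_j$ admits a $V_{j+1}$-positive extension $\widehat{L}_j \in V_{j+1}^*$, Corollary~\ref{cor:trh} yields a finitely atomic representing measure for $L_j$.

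The main obstacle, therefore, is to construct a $V_{j+1}$-positive extension of $L_j$ from the global $V$-positivity of $L$. The key point is that $L$ itself, restricted to $V_{j+1}$, is already a $V_{j+1}$-positive functional: if $p \in V_{j+1}$ is nonnegative on $S$, then $p$ is a nonnegative function in $V$, so $V$-positivity of $L$ gives $L(p) \ge 0$. Hence I would simply take $\widehat{L}_j = L|_{V_{j+1}} = L_{j+1}$, which manifestly extends $L_j$ (since $V_j \subseteq V_{j+1}$) and is $V_{j+1}$-positive. This sidesteps any genuine Hahn-Banach–type extension difficulty: the extension is handed to us for free by the ambient functional, and the positivity condition needed at level $j+1$ is a special case of the positivity we are assuming at the top.

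Assembling the pieces: for each $j \ge 1$, the functional $L_{j+1}$ is a $V_{j+1}$-positive extension of $L_j$, so Corollary~\ref{cor:trh} (with $U = V_j$, ambient space $V_{j+1}$, and $\rho = \rho_{j+1}$) produces a finitely atomic representing measure $\mu_j$ for $L_j$. Having thus verified that every truncation $L_j$ has a representing measure, Theorem~\ref{stochel} immediately furnishes a representing measure for $L$ itself, completing the converse and hence the proof. I expect no further technical friction beyond checking that $\rho_{j+1}$ indeed lies in the interior of $P_{V_{j+1}}$ and is strictly positive, which is guaranteed by the standing remark that all functions in the interior of a full-dimensional pointed cone of nonnegative functions are strictly positive; one may have to note explicitly that $P_{V_{j+1}}$ is full-dimensional, which follows since it contains the strictly positive $\rho_{j+1}$ and every nonnegative perturbation of it.
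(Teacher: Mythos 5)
Your proof is correct and follows essentially the same route as the paper: the converse is obtained by applying Corollary \ref{cor:trh} levelwise with $U=V_j$ inside $V_{j+1}$ (the paper uses $U=V_{j-1}$ inside $V_j$, an immaterial index shift), taking the restriction of $L$ itself as the required positive extension, and then invoking Theorem \ref{stochel}. Your extra remarks merely make explicit what the paper leaves implicit when it asserts that the hypotheses of Corollary \ref{cor:trh} are satisfied.
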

\begin{proof}
It is clear that if $L$ has a representing measure, then $L$ is $V$-positive.
For the converse, suppose $L$ is $V$-positive. For each $j\ge 2$, 
$U \equiv V_{j-1}$, $V \equiv V_{j}$, $\rho \equiv \rho_{j}$, and $L \equiv L_{j}$ satisfy the conditions of Corollary \ref{cor:trh}, 
which implies that $ L_{j-1}$ has a finitely atomic representing measure. 
It now follows from Theorem \ref{stochel} that $L$ has a representing measure.
\end{proof}

\begin{remark}
When $V = C_{0}(S)$, Theorem \ref{rh} is essentially equivalent to the Riesz Representation Theorem for positive linear functionals
\cite[Appendix C, Theorem 18]{Co}. This result holds for
any locally compact Hausdorff space without assuming
$\sigma$-compactness.
\end{remark}

Let $V$ be as in Theorem \ref{stochel} and let $L$ be a linear functional on $V$. 
Since $V_{j-1} \subseteq V_{j}$, it follows
that $\mathcal{CV}(L_{j}) \subseteq \mathcal{CV}(L_{j-1})$. We now define the
\textit{core variety} of $L$ by $\mathcal{CV}(L) := \bigcap_{j=1}^{\infty} \mathcal{CV}(L_{j})$. To show that $\CV(L)$ is well-defined, we show that it is independent of the decomposition of $V$ as an increasing union of finite dimensional subspaces.
Note first that if $\{V_{j_{k}}\}_{k=1}^{\infty}$ is a subsequence of $\{V_{j}\}$, then $\bigcap_{k=1}^{\infty} \CV(L|_{V_{j_{k}}}) = \bigcap_{j=1}^{\infty} \CV(L_j).$ Now suppose $V = \bigcup_{i=1}^{\infty} W_{i},$ where each $W_{i}$ is finite dimensional and 
satisfies $W_{i} \subset W_{i+1}.$ It follows from the finite dimensionality of the subspaces $W_{i} $ and $V_{j}$ that for each $k$, there exist $i_k$ and $j_k$, with $i_k < i_{k+1}, j_k < j_{k+1}$, satisfying $W_{i_{k}} \subseteq V_{j_k} \subseteq W_{i_{k+1}}$, and from this we see that $\bigcap_{i=1}^{\infty} \CV(L|_{W_{i}}) = \bigcap_{k=1}^{\infty} \CV(L|_{W_{i_{k}}})= \bigcap_{k=1}^{\infty} \CV(L|_{V_{j_{k}}}) = \bigcap_{j=1}^{\infty} \CV(L_j).$

We conclude with an analogue of Theorem \ref{thm:main}.
\begin{theorem}\label{gcv}
A linear functional $L:V \rightarrow \mathbb{R}$ has a representing measure if and only if 
$\mathcal{CV}(L) \not = \emptyset$.
\end{theorem}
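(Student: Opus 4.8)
\textbf{Proof proposal for Theorem \ref{gcv}.}

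The plan is to prove both implications by leveraging the truncated theory (Theorem \ref{thm:main}) applied at each finite level $L_j := L|_{V_j}$, together with the Stochel-type compactness argument already established in Theorem \ref{stochel}. First I would dispose of the easy direction: if $L$ has a representing measure $\mu$, then $\mu$ represents each truncation $L_j$, so by the general observation following Theorem \ref{thm:main} (or by Remark \ref{rem:gen}, which does not even require the strict positivity hypothesis) we have $\operatorname{supp}\mu \subseteq \mathcal{CV}(L_j)$ for every $j$. Intersecting over $j$ gives $\operatorname{supp}\mu \subseteq \bigcap_{j=1}^\infty \mathcal{CV}(L_j) = \mathcal{CV}(L)$. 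Since a representing measure is nonzero under the standing setup (note $L(1) = \mu(S) \ge 0$, and we may dispose of the trivial $L = 0$ case separately, where the zero measure represents $L$ but the theorem's content concerns nonzero functionals as in Theorem \ref{thm:main}), its support is nonempty, whence $\mathcal{CV}(L) \neq \emptyset$.

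The substantive direction is the converse: assuming $\mathcal{CV}(L) = \bigcap_j \mathcal{CV}(L_j) \neq \emptyset$, I want to produce a representing measure for $L$. The natural strategy is to show each truncation $L_j$ has a representing measure and then invoke Theorem \ref{stochel} to glue them into a representing measure for $L$. To get a representing measure for $L_j$, I would apply Theorem \ref{thm:main} at level $V_j$: since $1 \in V_1 \subseteq V_j$, the constant function is a strictly positive element of $P_{V_j}$, so the Standing Assumption framework is available, and Theorem \ref{thm:main} tells us $L_j$ has a representing measure if and only if $\mathcal{CV}(L_j) \neq \emptyset$. Now $\mathcal{CV}(L) \neq \emptyset$ and $\mathcal{CV}(L) \subseteq \mathcal{CV}(L_j)$ (using the established nesting $\mathcal{CV}(L_j) \subseteq \mathcal{CV}(L_{j-1})$ and $\mathcal{CV}(L) = \bigcap_j \mathcal{CV}(L_j)$) force $\mathcal{CV}(L_j) \neq \emptyset$ for every $j$. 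Hence each $L_j$ has a (finitely atomic) representing measure.

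With each $L_j$ representable, Theorem \ref{stochel} applies directly once its hypotheses are met --- namely $1 \in V_1$ and the existence of strictly positive $\rho_j \in V_j$ with $q/\rho_j \in C_0(S)$ for all $q \in V_{j-1}$ --- and these are exactly the standing hypotheses on $V$ in this section, so no extra work is needed there. Theorem \ref{stochel} then yields a representing measure for $L$, completing the converse.

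The main obstacle I anticipate is the first direction's subtlety rather than the second: I must ensure that $\operatorname{supp}\mu \subseteq \mathcal{CV}(L_j)$ genuinely holds at every finite level. This requires that the functions in each $V_j$ be continuous (so that the Radon/support containment argument of Theorem \ref{thm:stabilize} and Remark \ref{rem:gen} applies) and that $\mu$ be regular, both of which are guaranteed by the standing assumption that $V \subseteq C(S)$ with $S$ a $\sigma$-compact, locally compact metric space and representing measures taken to be finite positive regular Borel measures. On such spaces every finite Borel measure is Radon, so the clause ``$\operatorname{supp}\mu \subseteq \mathcal{CV}(L_j)$ for Radon representing measures'' from Theorem \ref{thm:main}(2) is exactly what is needed. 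I would flag this regularity/continuity dependence explicitly, as it is the linchpin that transfers the truncated support-containment conclusion to the full problem.
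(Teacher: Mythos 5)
Your proposal is correct and follows essentially the same route as the paper: the forward direction via $\operatorname{supp}\mu \subseteq \mathcal{CV}(L_j)$ for each truncation (using Theorem \ref{thm:main}(2) and the fact that representing measures here are regular, hence Radon), and the converse by applying Theorem \ref{thm:main} to each nonempty $\mathcal{CV}(L_j)$ and then gluing via Theorem \ref{stochel}. Your extra care with the $L=0$ edge case and the explicit justification of the Radon hypothesis are reasonable refinements but do not change the argument.
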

\begin{proof} Let $\mu$ be a representing measure for $L$. For each $j$, $\mu$ is a representing
measure for $L_{j}$, so $\mu$ is Radon, and Theorem \ref{thm:main} (2) implies that $\operatorname{supp} \mu \subseteq \mathcal{CV}(L_{j})$. Thus, $\operatorname{supp}\mu 
\subseteq \mathcal{CV}(L)$, so $\mathcal{CV}(L)$  is nonempty. For the converse,
if $\mathcal{CV}(L) \not = \emptyset$, then for each $j$, $\mathcal{CV}(L_{j})$ is
nonempty, so Theorem \ref{thm:main} implies that $L_{j}$ has a finitely atomic representing measure. 
It now follows from Theorem \ref{stochel} that $L$ has a representing measure.
\end{proof}

\begin{example}
In the classical Full Moment Problem for $n=1$, consider a real sequence $\beta \equiv \beta^{(\infty)} = \{\beta_{i}\}_{i=0}^{\infty}$,
with Riesz functional $L_{\beta}$,
for which the corresponding Hankel matrix $M := (\beta_{i+j})_{i,j=0}^{\infty}$ is positive definite.
For each truncation $\beta^{(2d)}$, since the corresponding Hankel matrix (moment matrix) $M_{d}$ is
positive definite, \cite[Proposition 3.1]{F2} implies that $\CV(L_{\beta^{(2d)}}) = \mathbb{R}$.
Thus $\CV(L_{\beta}) = \mathbb{R}$, so Theorem \ref{gcv} implies that $L_{\beta}$ has a representing measure,
in keeping with Hamburger's Theorem (cf. \cite{A}, \cite{AK}).
\end{example}

\end{document}